\documentclass[10pt]{amsart}
\usepackage[latin1]{inputenc}
\usepackage{amsmath}
\usepackage{amsfonts}
\usepackage{amssymb}
\usepackage{amsthm}
\usepackage{graphicx,epsfig}
\usepackage{amscd}
\usepackage[all]{xy}
\usepackage{hyperref}
\paperheight=29.7cm
\paperwidth=21cm

\DeclareMathOperator{\supp}{supp}

\DeclareMathOperator{\dv}{div}

\DeclareMathOperator{\tr}{tr}

\DeclareMathOperator{\Area}{Area}

\DeclareMathOperator{\sgn}{sign}

\newcommand{\D}{\mathbb{D}}
\newcommand{\M}{\mathbb{M}}
\newcommand{\R}{\mathbb{R}}
\newcommand{\h}{\mathbb{H}}
\newcommand{\Z}{\mathbb{Z}}
\newcommand{\N}{\mathbb{N}}
\newcommand{\C}{\mathbb{C}}
\newcommand{\s}{\mathbb{S}}

\newcommand{\rmd}{\mathrm{d}}

\newcommand{\rmo}{\mathrm{o}}

\newcommand{\cM}{{\mathcal M}}

\newcommand{\cE}{{\mathcal E}}

\newcommand{\cH}{{\mathcal H}}

\newcommand{\cC}{{\mathcal C}}

\newcommand{\cL}{{\mathcal L}}

\newcommand{\cQ}{{\mathcal Q}}

\newcommand{\tra}{{}^\mathrm{t}}

\makeatletter
\@namedef{subjclassname@2020}{\textup{2020} Mathematics Subject Classification}
\makeatother

\numberwithin{equation}{section}

\begin{document}

\newtheorem{thm}{Theorem}[section]
\newtheorem*{thmintro}{Theorem}
\newtheorem{cor}[thm]{Corollary}
\newtheorem{prop}[thm]{Proposition}
\newtheorem{app}[thm]{Application}
\newtheorem{lemma}[thm]{Lemma}
\newtheorem{notation}[thm]{Notations}
\newtheorem{hypothesis}[thm]{Hypothesis}

\newtheorem{defin}[thm]{Definition}
\newenvironment{defn}{\begin{defin} \rm}{\end{defin}}
\newtheorem{remk}[thm]{Remark}
\newenvironment{rem}{\begin{remk} \rm}{\end{remk}}
\newtheorem{exa}[thm]{Example}
\newenvironment{ex}{\begin{exa} \rm}{\end{exa}}
\newtheorem{cla}[thm]{Claim}
\newenvironment{claim}{\begin{cla} \rm}{\end{cla}}

\title{Generalized Ricci surfaces}

\author[Beno\^it Daniel]{Beno\^it Daniel} 
\address{Universit\'e de Lorraine, CNRS, IECL, F-54000 Nancy, France}
\email{benoit.daniel@univ-lorraine.fr}

\author[Yiming Zang]{Yiming Zang}
\address{Department of Sciences, North China University of Technology, Beijing 100144, P.R. China}
\email{yiming.zang@ncut.edu.cn}

\thanks{B. D. is partially supported by the ANR project Min-Max (ANR-19-CE40-0014).}

\keywords{Ricci surface, curvature, isometric immersion, minimal surface}
\subjclass[2020]{Primary 53C25, 53C42; Secondary 30F45, 53A15}

\begin{abstract}
We consider smooth Riemannian surfaces whose curvature $K$ satisfies the relation $\Delta\log|K-c|=aK+b$ away from points where $K=c$ for some $(a,b,c)\in\R^3$, which we call generalized Ricci surfaces. We prove some isometric immersion theorems allowing points where $K=c$ using properties of log-harmonic functions. For instance, we obtain a characterization of Riemannian surfaces that locally admit minimal isometric immersions, possibly with umbilical points, into a $3$-dimensional Riemannian manifold of constant sectional curvature. We also give an application to convex affine spheres. Finally, we study compact generalized Ricci surfaces: we obtain topological obstructions and construct examples.
\end{abstract}

\maketitle

\tableofcontents

\section{Introduction}

In this paper we will consider smooth (i.e., of class $\cC^\infty$) Riemannian surfaces whose curvature $K$ satisfies the relation $\Delta\log|K-c|=aK+b$ away from points where $K=c$ for some $(a,b,c)\in\R^3$. We will call them \emph{generalized Ricci surfaces}.

A first motivation comes from minimal isometric immersions. For $c\in\R$, let $\M^3(c)$ be the $3$-dimensional Riemannian space form of sectional curvature $c$. By the Gauss equation, the curvature $K$ of a minimal surface in $\M^3(c)$ satisfies $K\leqslant c$. Points where $K=c$ are umbilical points. Also, it is well known \cite{ricci,lawson,docarmo-dajczer} that if $(\Sigma,\rmd s^2)$ is a Riemannian surface whose curvature $K$ satisfies $K<c$, then $(\Sigma,\rmd s^2)$ locally admits minimal isometric immersions into $\M^3(c)$ if and only if $\Delta\log|K-c|=4K$, or, equivalently, if and only if the metric $\sqrt{|K-c|}\rmd s^2$ is flat. 

The hypothesis $K<c$ can be weakened; instead, it can be assumed that $K\leqslant c$ and the function $\sqrt{|K-c|}$ is of \emph{absolute value type} \cite{egt,et-rmsup} (see Definition \ref{avt}).

When $c=0$, A. Moroianu and S. Moroianu \cite{moroianu} extend this result replacing the hypothesis $K<0$ by the hypothesis that $K\leqslant0$ and the metric is smooth. To do this, they prove that a smooth \emph{log-harmonic} function (see Definition \ref{logharmonic}) has isolated zeroes or vanishes identically. They deduce from this that if a smooth Riemannian surface satisfies $\Delta\log|K|=4K$ away from points where $K=0$ (which they call a \emph{Ricci surface}), then either $K\equiv0$ or the zeroes of $K$ are isolated. If moreover $K\leqslant0$, then they recover a local minimal isometric immersion into $\R^3$ via the spinorial Weierstrass representation. A. Moroianu and S. Moroianu also construct compact Ricci surfaces. Then, the second author of this paper \cite{zang} constructs non-compact Ricci surfaces with ``catenoidal'' ends.

In this paper we will extend this isometric immersion result for any $c\in\R$ (item \eqref{ii-min3} of Theorem \ref{immersion}).

More generally, different classes of surfaces satisfy an equation of the form $\Delta\log|K-c|=aK+b$, possibly assuming $K\neq c$: certain minimal surfaces (e.g., complex curves in complex $2$-dimensional K\"ahler space forms), gradient Ricci solitons, certain biconservative surfaces, convex affine spheres endowed with the Blaschke metric, etc.

In Section \ref{sec:generalities} we define generalized Ricci surfaces and we derive some first general properties. In particular (Theorem \ref{main}), we prove that function $\sqrt{|K-c|}$ is of absolute value type; hence, either $K\equiv c$ or the zeroes of $K-c$ are isolated. To do this, we use the properties of smooth log-harmonic maps proved by A. Moroianu and S. Moroianu. Also, we prove that multiplying the metric by a suitable power of $|K-c|$ yields, away from points where $K=c$, another generalized Ricci metric or a constant curvature metric in some cases.

In Section \ref{sec:param}, we establish useful equations in terms of a local conformal coordinate. For some particular values of $(a,b,c)$ we make a connection with Toda systems.

Section \ref{sec:geometric} is devoted to geometric properties of generalized Ricci surfaces for some particular values of $(a,b,c)$. We extend (Theorem \ref{immersion}) isometric immersions theorems that exist in the literature, considering the points where $K=c$ thanks to Theorem \ref{main}. In the same spirit, we extend a characterization of the Blaschke metric of convex affine spheres (Theorem \ref{thmaffine}). We also consider some variational properties.

Finally, we will study in Section \ref{sec:compact} compact orientable generalized Ricci surfaces. We will be mainly interested in the following question: for which values of $(a,b,c)\in\R^3$ and $g\in\N$ does their exist non constant curvature compact orientable generalized Ricci surfaces of type $(a,b,c)$ and genus $g$?

We establish necessary conditions using an integral formula for absolute value type functions by Eschenburg, Guadalupe and Tribuzy \cite{egt} and elementary considerations. In the very particular case of spheres of type $(a,0,0)$, we have an extra condition coming from the fact that these surfaces admit a rational function $\overline\C\to\overline\C$ as ``developing map''.

We also construct examples of a given genus $g$. We restrict ourselves to the case where $b=0$: then, a flat metric with point singularities or ends is naturally associated to a generalized Ricci metric. 
In many cases we are also able to prescribe some properties of the zeroes of the function $K-c$ and the conformal type of the surface.
To do that we use different techniques.
\begin{itemize}
 \item Some rotational or translational examples are constructed from solutions of an ordinary differential equation (Proposition \ref{spherea0c} and Example \ref{delaunay}).
 \item In Theorem \ref{a0chighgenus}, we look for metric in the conformal class of a hyperbolic metric on a surface of genus $g\geqslant2$. We find a conformal factor satisfying an order $2$ partial differential equation.
 \item When $c=0$ (Theorem \ref{sphere0}, Proposition \ref{sphere2} and Theorem \ref{a00highgenus}), we start with a constant curvature metric with prescribed conical singularities and we construct a generalized Ricci metric using a ``reciprocal Gauss-Bonnet theorem'' by Wallach and Warner \cite{ww} or, when $a\neq2$, adapting the method of A. Moroianu and S. Moroianu.
 \end{itemize}
 
We can summarize necessary and/or sufficient conditions that we establish in this paper for the existence of a non constant curvature compact orientable generalized Ricci surface of type $(a,0,c)$ of a given genus.
\begin{itemize}
 \item For spheres:
 \begin{itemize}
  \item if $c=0$, then a necessary and sufficient condition is $a\in-2\N^*$ (Proposition \ref{sphere} and Theorem \ref{sphere0});
  \item if $c\neq0$, then $a\in-\N^*$ is a necessary condition (Proposition \ref{sphere}) and $a\in-2\N^*$ is a sufficient condition (Proposition \ref{spherea0c}).
 \end{itemize}
 \item For tori: a necessary and sufficient condition is $a\cdot c>0$ (Proposition \ref{torus} and Example \ref{delaunay}).
 \item For surfaces of genus $g\geqslant2$: 
 \begin{itemize}
  \item if $c\leqslant0$, then a necessary and sufficient condition is $(g-1)a\in\N^*$ (Propositions \ref{highgenus}, \ref{a00highgenus} and Theorem \ref{a0chighgenus});
  \item if $c>0$, then $(g-1)a\in\N^*$ is a necessary condition (Proposition \ref{highgenus}); existence is already known for certain values of $a$ ($a=4$ for any $g$, $a=6$ for any odd $g$ and for $g=4$; see Example \ref{ex:lawson}).
 \end{itemize}
\end{itemize}

The question for $b\neq0$ remains largely open. We prove some necessary conditions but we only have examples already known in the literature or that can be deduced from them (Examples \ref{ex:veronese}, \ref{ex:torus}, \ref{ex:superminimal} and \ref{ex:superminimal2}).

We emphasize the importance of the smoothness hypothesis in this work. Non smooth surfaces satisfying $\Delta\log|K-c|=aK+b$ away from points where $K=c$ exist: see \cite[Remark 12.1]{lawson} and Remarks \ref{rknotsmooth} and \ref{spherenotsmooth}. However one cannot expect a characterization in terms of isometric immersions as in Theorem \ref{immersion} for them. Also, our results use the properties of smooth log-harmonic functions. For compact surfaces one cannot expect necessary conditions as the previous ones either. 

\section{Generalities} \label{sec:generalities}

\subsection{Terminology and notations}

\begin{itemize}
 \item In this paper, ``smooth'' means ``of class $\cC^\infty$''.
 \item All surfaces are assumed connected and without boundary.
 \item A metric on a Riemann surface is \emph{conformal} if it is compatible with the complex structure.
 \item If $(\Sigma,\rmd s^2)$ is a Riemannian surface, when there is no ambiguity we will denote by $||\,||$ its associated norm, $\mu$ its area form, $\nabla$ its gradient operator, $\Delta=\dv\nabla$ its Laplace-Beltrami operator (note that our sign convention differs from that of some references, for instance \cite{moroianu,cmop,fno,zang}), and $K$ its curvature.

We also recall that if two metrics $\rmd s_1^2$ and $\rmd s_2^2$ on a differentiable surface are related by $\rmd s_2^2=e^{-2f}\rmd s_1^2$ where $f:\Sigma\to\R$ is a smooth function, then their respective Laplace-Beltrami operators $\Delta_1$ and $\Delta_2$ and their respective curvatures $K_1$ and $K_2$ are related by
$$\Delta_2=e^{2f}\Delta_1,\quad\quad K_2=e^{2f}(K_1+\Delta_1f).$$
\item
If $n\geqslant3$ is an integer and $c\in\R$, 
\begin{itemize}
 \item $\M^n(c)$ will be the $n$-dimensional Riemannian space form of constant sectional curvature $c$ (i.e., the sphere $\s^n(c)$ if $c>0$, Euclidean space $\R^n$ if $c=0$, and hyperbolic space $\h^n(c)$ if $c<0$),
 \item $\M^n_1(c)$ will be the $n$-dimensional Lorentzian (i.e., with a metric of signature $(n-1,1)$) space form of constant sectional curvature $c$ (i.e., de Sitter space $\s^n_1(c)$ if $c>0$, Lorentz space $\R^n_1$ if $c=0$, and the universal cover of anti-de Sitter space $\h^n_1(c)$, 
 \item $\C\M^n(c)$ will be the (complex) $n$-dimensional K\"ahler space form of constant holomorphic sectional curvature $c$ (i.e., complex projective space $\C\mathbb{P}^n(c)$ if $c>0$, complex Euclidean space $\C^n$ if $c=0$, and complex hyperbolic space $\C\h^n(c)$ if $c<0$).
\end{itemize}
\item
We also let $\overline\C=\C\cup\{\infty\}$ denote the Riemann sphere.
\item
If $F$ is a real-valued function, the notation $\sgn F=1$ (respectively, $\sgn F=-1$) will mean that $F\geqslant0$ and $F\not\equiv0$ (respectively, $F\leqslant0$ and $F\not\equiv0$). 
\item A \emph{partition} of a positive integer $N$ is a tuple $(m_1,\dots,m_n)$ such that $n\in\N^*$, $m_j\in\N^*$ for all $j\in\{1,n\dots,n\}$ and $m_1+\dots+m_n=N$.
\end{itemize}

\subsection{Metrics with conical singularities}

We need to recall a few facts about metrics with conical singularities. We refer to \cite{mcowen,troyanov,troyanov1991prescribing}. A conformal metric $\rmd s^2$ on a Riemann surface $\Sigma$ is said to have a \emph{conical singularity} of order $\beta>-1$ (or of angle $2\pi(\beta+1)>0$) at a point $p\in\Sigma$ if in some neighborhood $U$ of $p$, we have $\rmd s^2=e^{2u}|\rmd z|^2$
where $z$ is a conformal coordinate in the neighborhood such that $z=0$ at $p$ and $u$ is a smooth function on $U\setminus\{0\}$ such that
$u_1:z\mapsto u(z)-\beta\log|z|$  is continuous at $0$. Then $\rmd s^2=e^{2u_1}|z|^{2\beta}|\rmd z|^2$ and the function $u_1$ satisfies $$(u_1)_{z\bar z}=-\frac K4|z|^{2\beta}e^{2u_1}$$ on $U\setminus\{0\}$ where $K$ is the curvature function. We now assume that $K$ is a constant $\kappa$. If $\beta>0$, then $u_1$ is of class $\cC^2$ at $0$. If $\beta\in\N^*$, then by induction we get $u_1\in\cC^{2k+1,\alpha}(U,\R)$ for $\alpha\in(0,1)$ and every $k\in\N$, and so $u_1\in\cC^\infty(U,\R)$. If $\kappa=0$, then $u_1\in\cC^\infty(U,\R)$ and it is harmonic. For instance, we may chose the conformal coordinate $z$ so that
$$\rmd s^2=\frac{4(\beta+1)^2|z|^{2\beta}}{(1+\kappa|z|^{2\beta+2})^2}|\rmd z|^2.$$

 In this paper we will use constructions of constant curvature metrics with conical singularities on compact surfaces from \cite{mcowen,troyanov-flat,troyanov,troyanov1991prescribing,bdmm,eremenko-gt,mondello2019spherical}.

\subsection{Definitions and first properties}

Motivated by many situations in differential geometry of surfaces, we introduce the following generalization of A. Moroianu and S. Moroianu's notion of Ricci surfaces \cite{moroianu}, which will be the object of this paper.

\begin{defin}
 Let $(a,b,c)\in\R^3$. We say that a smooth Riemannian surface $(\Sigma,\rmd s^2)$ is a \emph{generalized Ricci surface} of type $(a,b,c)$ if its curvature satisfies \begin{equation} \label{ricci}
  (c-K)\Delta K+||\nabla K||^2+(aK+b)(K-c)^2=0.
 \end{equation}
 The metric $\rmd s^2$ is called a \emph{generalized Ricci metric} of type $(a,b,c)$.
\end{defin}

In particular, generalized Ricci surfaces of type $(4,0,0)$ are Ricci surfaces in the sense of \cite{moroianu}.

\begin{lemma} \label{deltalog}
Let $(a,b,c)\in\R^3$. Let $(\Sigma,\rmd s^2)$ be a smooth Riemannian surface. Then $(\Sigma,\rmd s^2)$ is a generalized Ricci surface of type $(a,b,c)$ if and only if its curvature satisfies 
$$\Delta\log|K-c|=aK+b$$ on any open set where $K-c$ does not vanish.
\end{lemma}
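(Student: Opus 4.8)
The plan is to reduce the equivalence to a pointwise identity on the open set $\{K\neq c\}$ and then account for the remaining points by continuity. First I would set $f=K-c$, so that $\nabla f=\nabla K$ and $\Delta f=\Delta K$ because $c$ is constant. On any open set $U$ on which $f$ does not vanish, $|f|=\sqrt{f^2}$ is smooth and positive, hence $\log|f|$ is smooth, with $\nabla\log|f|=\nabla f/f$; applying $\Delta=\dv\nabla$ and the product rule $\dv(gX)=g\,\dv X+\langle\nabla g,X\rangle$ with $g=1/f$ and $X=\nabla f$ gives
$$\Delta\log|K-c|=\frac{\Delta f}{f}-\frac{\|\nabla f\|^2}{f^2}=\frac{(K-c)\Delta K-\|\nabla K\|^2}{(K-c)^2}\quad\text{on }U.$$
Multiplying by $(K-c)^2\neq0$, the identity $\Delta\log|K-c|=aK+b$ on $U$ is therefore equivalent to $(K-c)\Delta K-\|\nabla K\|^2=(aK+b)(K-c)^2$, that is, to equation \eqref{ricci} on $U$ after moving everything to one side.

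This already settles the ``only if'' direction: if \eqref{ricci} holds on all of $\Sigma$, it holds in particular on any open set on which $K-c$ does not vanish, and there the displayed computation gives $\Delta\log|K-c|=aK+b$. For the ``if'' direction, assume $\Delta\log|K-c|=aK+b$ on every open set where $K-c$ does not vanish; applying this to the open set $\{K\neq c\}$ and using the computation above, \eqref{ricci} holds on $\{K\neq c\}$. It also holds on the interior of $\{K=c\}$, since there $K$ is locally constant equal to $c$, so all three terms in \eqref{ricci} vanish identically. Now the left-hand side of \eqref{ricci} is a continuous (indeed smooth) function on $\Sigma$ because $\rmd s^2$, hence $K$, is smooth; it vanishes on $\{K\neq c\}\cup\mathrm{int}\{K=c\}$, whose complement $\partial\{K=c\}$ is a closed set with empty interior and is therefore nowhere dense. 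By density of $\{K\neq c\}\cup\mathrm{int}\{K=c\}$ and continuity, \eqref{ricci} holds on all of $\Sigma$.

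There is no serious obstacle here: the core is the elementary identity for $\Delta\log|f|$, and the only point requiring a little care is the last step of the converse, where one must upgrade the validity of \eqref{ricci} from the open set $\{K\neq c\}$ to the possibly nonempty boundary $\partial\{K=c\}$ — which works precisely because \eqref{ricci} is an equality between continuous functions and because that boundary is nowhere dense. I would make sure to phrase that density argument cleanly so that it is clear no extra regularity or smallness hypothesis on the zero set of $K-c$ is needed.
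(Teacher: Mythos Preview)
Your proof is correct and follows the same approach as the paper: both rest on the identity $\Delta\log|K-c|=\dfrac{\Delta K}{K-c}-\dfrac{\|\nabla K\|^2}{(K-c)^2}$ on $\{K\neq c\}$, together with the observation that \eqref{ricci} holds trivially where $K\equiv c$. Your treatment of the converse is in fact more careful than the paper's terse one-line proof: you explicitly handle the boundary $\partial\{K=c\}$ via the density/continuity argument, whereas the paper leaves this step implicit.
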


\begin{proof}
 This follows from the fact that, on an open set where $K-c$ does not vanish, $\Delta\log|K-c|=\frac{\Delta K}{K-c}-\frac{||\nabla K||^2}{(K-c)^2}$, and from the fact that \eqref{ricci} clearly holds an open set where $K\equiv c$.
\end{proof}

\begin{rem} \label{kappa}
 Obviously, a surface of constant curvature $\kappa$ is a generalized Ricci surface of type $(a,b,c)$ if and only if $\kappa=c$ or $a\kappa+b=0$.
\end{rem}

\begin{rem} \label{homothety}
 If $(\Sigma,\rmd s^2)$ is a Ricci surface of type $(a,b,c)\in\R^3$ and if $r>0$, then $(\Sigma,r^2\rmd s^2)$ is a Ricci surface of type $(a,b/r^2,c/r^2)$.
\end{rem}

We recall the notions of log-harmonic function and absolute value type function, which will be crucial in this work.

\begin{defin}[see \cite{moroianu}] \label{logharmonic}
  Let $\Sigma$ be a Riemann surface and $U$ be an open subset of $\Sigma$. A smooth function $F:U\to[0,+\infty)$ is \emph{log-harmonic} if $\log|F|$ is harmonic on any open set where $F$ does not vanish.
\end{defin}

\begin{defin}[Eschenburg, Guadalupe and Tribuzy, \cite{egt}] \label{avt}
 Let $\Sigma$ be a Riemann surface. A function $F:\Sigma\to[0,+\infty)$ is of \emph{absolute value type} if any point $p\in\Sigma$ admits a neighborhood $U$ on which there exist a holomorphic funtion $h:U\to\C$ and a smooth function $r:U\to\C^*$ such that $F=|rh|$ on $U$. Then, if $F$ has an isolated zero at $p$, then the \emph{order} of $F$ at $p$ is the order of $h$ at $p$.
\end{defin}

The main general result in this paper relies on properties of log-harmonic functions proved by A. Moroianu and S. Moroianu \cite{moroianu}; it extends one of their main results about Ricci surfaces.

\begin{thm} \label{main}
 Let $(\Sigma,\rmd s^2)$ be a generalized Ricci surface of type $(a,b,c)\in\R^3$. Then, either $K\equiv c$ on $\Sigma$, or the zeroes of $K-c$ are isolated. In particular, $K-c$ does not change sign. Moreover, the function $\sqrt{|K-c|}$ is of absolute value type.
\end{thm}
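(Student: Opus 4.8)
The plan is to reduce the statement to the key technical result of A. Moroianu and S. Moroianu, namely that a smooth log-harmonic function either has isolated zeroes or vanishes identically, and that such a function is automatically of absolute value type. By Lemma \ref{deltalog}, being a generalized Ricci surface of type $(a,b,c)$ means that $\Delta\log|K-c|=aK+b$ holds on the open set $\Sigma\setminus\{K=c\}$. The function $F:=|K-c|$ is smooth and nonnegative on $\Sigma$, but it is \emph{not} log-harmonic in general, because the right-hand side $aK+b$ need not vanish. So the first step is to absorb the inhomogeneous term $aK+b$ into a conformal change of metric, or rather to modify $F$ by a positive smooth factor so that the modified function becomes genuinely log-harmonic while keeping the same zero set (with the same multiplicities).

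Concretely, I would work locally: fix $p\in\Sigma$ with $K(p)=c$ (if no such point exists there is nothing to prove beyond the trivial case), choose a conformal coordinate $z$ on a simply connected neighborhood $U$, and write the metric as $\rmd s^2=e^{2u}|\rmd z|^2$, so that $K=-e^{-2u}\Delta_0 u$ with $\Delta_0=4\partial_z\partial_{\bar z}$ the flat Laplacian. On $U\setminus\{K=c\}$ the generalized Ricci equation reads $\Delta_0\log|K-c|=e^{2u}(aK+b)$. Since $aK+b$ is a smooth function on all of $U$ and $U$ is simply connected, I can solve $\Delta_0 w=e^{2u}(aK+b)$ for a smooth function $w$ on $U$ (Poisson equation with smooth data). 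Then the function $G:=e^{-w}|K-c|$ is smooth and nonnegative on $U$, vanishes exactly where $K-c$ does, and satisfies $\Delta_0\log G=\Delta_0\log|K-c|-\Delta_0 w=0$ on $U\setminus\{K=c\}$; that is, $G$ is log-harmonic on $U$. By the Moroianu--Moroianu result, either $G\equiv0$ on $U$ — equivalently $K\equiv c$ on $U$ — or the zeroes of $G$, hence of $K-c$, are isolated in $U$, and $G$ is of absolute value type, say $G=|rh|$ with $h$ holomorphic and $r$ smooth nonvanishing. Then $|K-c|=e^w G=|e^w r\,h|$ exhibits $|K-c|$ as of absolute value type, and $\sqrt{|K-c|}=|e^{w/2}\,r^{1/2}\,h^{1/2}|$ — after choosing a holomorphic square root of $h$ on a possibly smaller neighborhood, using that the order of $h$ at an isolated zero is the order of $K-c$, which one shows is even (see below) — is of absolute value type as well. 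Actually it is cleaner to note directly that $\sqrt{G}$ is log-harmonic wherever it is defined away from zeroes, and to apply the Moroianu--Moroianu structure result to $\sqrt{G}$ itself once we know the order of $G$ at each isolated zero is even; alternatively one observes that a nonnegative smooth function whose square is of absolute value type is itself of absolute value type.

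The global conclusion follows by a standard connectedness argument: the set where $K\equiv c$ locally is both open and closed in $\Sigma$ (closed because the zeroes of $K-c$ being isolated on an open set is an open condition on that set, and the two alternatives are exhaustive and mutually exclusive on any connected neighborhood), so by connectedness of $\Sigma$ either $K\equiv c$ everywhere or the zeroes of $K-c$ are isolated everywhere; in the latter case $K-c$ cannot change sign, since a sign change on a connected surface would force a non-isolated zero set (the zero set would locally separate $\Sigma$). The main obstacle I anticipate is the bookkeeping around square roots: to assert that $\sqrt{|K-c|}$ — and not merely $|K-c|$ — is of absolute value type, one needs the order of $K-c$ at each isolated zero to be even, or else a direct argument that $\sqrt G$ is again log-harmonic and smooth. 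The parity of the order should come out of the structure of the equation itself: writing $u=u_1+\tfrac{\beta}{2}\log|z|^2$-type expansions, or more simply observing that near an isolated zero $K-c$ does not change sign (by the local statement already obtained), so $|K-c|=K-c$ or $c-K$ is smooth and nonnegative with an isolated zero, forcing its vanishing order to be even — hence $h$ has even order and admits a holomorphic square root locally. I would isolate this parity observation as the one genuinely non-formal point and handle everything else as routine consequences of Lemma \ref{deltalog} and the cited log-harmonic function theory.
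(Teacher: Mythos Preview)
Your approach is essentially the paper's: multiply $K-c$ by a smooth positive factor to make it log-harmonic, invoke the Moroianu--Moroianu structure theorem, then run a connectedness argument. The paper's factor is explicit --- it takes $e^{-af}e^{-b\varphi}$ with $\Delta f=K$ (the conformal factor) and $\Delta\varphi=1$ --- which is simply a particular solution of your Poisson equation $\Delta w=aK+b$, so the two reductions coincide.

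The one place where the paper is cleaner is the square root. Moroianu's Lemma~4.5 actually delivers the log-harmonic function directly in the form $\varepsilon|h|^2$ (not merely $|rh|$), so $\sqrt{|K-c|}=|e^{af/2}e^{b\varphi/2}\,h|$ falls out immediately and no parity discussion is needed. Your parity argument is right in spirit but imprecise as written: ``smooth nonnegative with an isolated zero forces even vanishing order'' refers to Taylor order, which is not \emph{a priori} the order of the holomorphic $h$ in a decomposition $|rh|$. The honest justification is that if $G=|z|^m\cdot(\text{smooth positive})$ is itself smooth at $0$, then $m$ must be even, since $|z|^m$ fails to be $C^\infty$ at the origin when $m$ is odd. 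Either tighten the argument that way, or --- better --- cite the $|h|^2$ form of Moroianu's lemma and drop the parity step entirely.
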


\begin{proof}
 Let $U$ be a neighborhood of a given point on which there exists a function $\varphi:U\to\R$ such that $\Delta\varphi=1$. On $U$ we write the metric as $\rmd s^2=e^{-2f}|\rmd z|^2$ where $z$ is a conformal coordinate and $f:U\to\R$ is smooth. Then the function $e^{-af}e^{-b\varphi}(K-c)$ is log-harmonic on $U$; indeed, it is smooth and, on an open subset where it does not vanish, we have
 $$\Delta\log|e^{-af}e^{-b\varphi}(K-c)|=-a\Delta f-b\Delta\varphi+\Delta\log|K-c|=0$$ by Lemma \ref{deltalog} and the fact that $\Delta f=K$. Then, by \cite[Theorem 4.6]{moroianu}, either $e^{-af}e^{-b\varphi}(K-c)\equiv0$ on $U$ or the set of zeroes of $e^{-af}e^{-b\varphi}(K-c)$ is a discrete subset of $U$. 
 
 From this we deduce that the set of non-isolated zeroes of $K-c$ is open and closed in $\Sigma$. Hence, either $K\equiv c$ on $\Sigma$, or the zeroes of $K-c$ are isolated. In particular, $K-c$ does not change sign.  
 
 If $K\equiv c$, then obviously $\sqrt{|K-c|}$ is of absolute value type. We now assume that $K-c$ has isolated zeroes. Let $p\in\Sigma$ be a zero of $K-c$. Let $U$ around $p$, $f$ and $\varphi$ be as above; we may moreover assume that $K-c$ does not vanish on $U\setminus\{p\}$ and that $U$ is conformally equivalent to $\D$. Then, by \cite[Lemma 4.5]{moroianu}, there exists a holomorphic function $h$ on $U$ such that 
 $$e^{-af}e^{-b\varphi}(K-c)=\varepsilon|h|^2$$
 on $U$ with $\varepsilon=\pm1$ depending on the sign of $K-c$. Then, $\sqrt{|K-c|}=|e^{af/2}e^{b\varphi/2}h|$ on $U$. This proves that $\sqrt{|K-c|}$ is of absolute value type.
\end{proof}

\subsection{Multiplying the metric by a power of $|K-c|$}

We now show how to obtain from a generalized Ricci metric, away from points where $K=c$, constant curvature metrics or other generalized Ricci metrics by making a suitable conformal change of metric.

\begin{lemma} \label{tildek}
Let $(a,b,c)\in\R^3$ and let $(\Sigma,\rmd s^2)$ be a smooth Riemannian surface such that $K-c$ does not vanish. Let $\gamma\in\R^*$. Then $(\Sigma,\rmd s^2)$ is a generalized Ricci surface of type $(a,b,c)$ if and only if  the curvature of the metric $|K-c|^\gamma\rmd s^2$ is $$\tilde K=|K-c|^{-\gamma}\left(\left(1-\frac{\gamma a}2\right)K-\frac{\gamma b}2\right).$$ 
\end{lemma}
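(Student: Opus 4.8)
The plan is to apply directly the conformal change of curvature formula recalled in the terminology subsection. Since $K-c$ is smooth and nowhere vanishing on $\Sigma$, the function $f:=-\frac{\gamma}{2}\log|K-c|$ is a well-defined smooth function on $\Sigma$, and $|K-c|^\gamma\rmd s^2=e^{-2f}\rmd s^2$. Hence the curvature $\tilde K$ of $|K-c|^\gamma\rmd s^2$ is given by
$$\tilde K=e^{2f}(K+\Delta f)=|K-c|^{-\gamma}\bigl(K+\Delta f\bigr),$$
where $\Delta=\dv\nabla$ denotes the Laplace--Beltrami operator of $\rmd s^2$.

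Next I compute $\Delta f=-\frac{\gamma}{2}\Delta\log|K-c|$. By Lemma \ref{deltalog}, and using that $K-c$ does not vanish anywhere, $(\Sigma,\rmd s^2)$ is a generalized Ricci surface of type $(a,b,c)$ if and only if $\Delta\log|K-c|=aK+b$ on all of $\Sigma$, equivalently if and only if $\Delta f=-\frac{\gamma}{2}(aK+b)$. Substituting this into the expression for $\tilde K$ yields exactly
$$\tilde K=|K-c|^{-\gamma}\left(K-\frac{\gamma}{2}(aK+b)\right)=|K-c|^{-\gamma}\left(\left(1-\frac{\gamma a}{2}\right)K-\frac{\gamma b}{2}\right),$$
which is the claimed formula.

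For the converse direction, assume $\tilde K$ has the stated form. Comparing with $\tilde K=|K-c|^{-\gamma}(K+\Delta f)$ and multiplying by $|K-c|^\gamma$ (which is nonzero), we obtain $K+\Delta f=(1-\frac{\gamma a}{2})K-\frac{\gamma b}{2}$, that is $\Delta f=-\frac{\gamma}{2}(aK+b)$, that is $-\frac{\gamma}{2}\Delta\log|K-c|=-\frac{\gamma}{2}(aK+b)$. Since $\gamma\neq 0$, this forces $\Delta\log|K-c|=aK+b$ on $\Sigma$, and Lemma \ref{deltalog} then tells us that $(\Sigma,\rmd s^2)$ is a generalized Ricci surface of type $(a,b,c)$.

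I do not expect a serious obstacle here: the argument is essentially a one-line application of the conformal change formula once the conformal factor $e^{-2f}=|K-c|^\gamma$ is identified, combined with the reformulation of the generalized Ricci condition given by Lemma \ref{deltalog}. The only points requiring minor care are the sign convention for $\Delta$ and the genuine smoothness of $\log|K-c|$, both of which are guaranteed by the hypotheses ($K-c$ smooth and nowhere vanishing) and the conventions fixed earlier in the paper.
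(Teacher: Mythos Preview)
Your proof is correct and follows essentially the same approach as the paper: both apply the conformal change of curvature formula with conformal factor $e^{-2f}=|K-c|^\gamma$ to obtain $\tilde K=|K-c|^{-\gamma}\bigl(K-\frac{\gamma}{2}\Delta\log|K-c|\bigr)$, and then invoke Lemma~\ref{deltalog}. The paper compresses this into a single line, while you spell out both directions and the smoothness of $\log|K-c|$, but there is no substantive difference.
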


\begin{proof}
 We have $\tilde K=|K-c|^{-\gamma}\left(K-\frac\gamma2\Delta\log|K-c|\right)$ and we conclude by Lemma \ref{deltalog}.
\end{proof}

We now extend a result of Fetcu, Nistor and Oniciuc \cite[Proposition 3.3]{fno} concerning generalized Ricci surfaces of type $(8/3,0,c)$.

\begin{cor} \label{constantcurvature}
Let $(a,c)\in\R^2$ and let $(\Sigma,\rmd s^2)$ be a smooth Riemannian surface such that $K-c$ does not vanish.
 \begin{enumerate}
  \item If $a\neq0$, then $(\Sigma,\rmd s^2)$ is a generalized Ricci surface of type $(a,0,c)$ if and only if $|K-c|^{2/a}\rmd s^2$ is a flat metric on $\Sigma$.
  \item Let $b=(2-a)c$. Then $(\Sigma,\rmd s^2)$ is a generalized Ricci surface of type $(a,b,c)$ if and only if $|K-c|\rmd s^2$ is a metric with constant curvature $\left(1-\frac a2\right)\sgn(K-c)$ on $\Sigma$.
 \end{enumerate}
 \end{cor}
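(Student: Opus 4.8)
The plan is to derive both statements directly from Lemma \ref{tildek} by plugging in suitable values of $\gamma$. For item (1), I would set $\gamma = 2/a$ (which is legitimate since $a \neq 0$); then $1 - \gamma a/2 = 1 - 1 = 0$ and $\gamma b/2 = 0$ since $b = 0$, so the formula in Lemma \ref{tildek} gives $\tilde K = |K-c|^{-2/a}\cdot 0 = 0$. Thus $(\Sigma, \rmd s^2)$ is a generalized Ricci surface of type $(a,0,c)$ if and only if $|K-c|^{2/a}\rmd s^2$ has curvature identically zero, i.e., is flat. For item (2), I would set $\gamma = 1$; then $\tilde K = |K-c|^{-1}\bigl((1 - a/2)K - b/2\bigr)$, and substituting $b = (2-a)c$ gives $(1-a/2)K - (2-a)c/2 = (1-a/2)K - (1-a/2)c = (1-a/2)(K-c)$, so $\tilde K = (1-a/2)\,|K-c|^{-1}(K-c) = (1-a/2)\,\sgn(K-c)$, using that $\frac{K-c}{|K-c|} = \sgn(K-c)$ in the sense defined in the terminology (constant $+1$ or $-1$ according to the sign of $K-c$, which is constant by the hypothesis that $K-c$ does not vanish together with connectedness). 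Again by Lemma \ref{tildek}, the surface is a generalized Ricci surface of type $(a,b,c)$ if and only if $|K-c|\rmd s^2$ has this constant curvature.

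I would present this as a short two-part proof, each part a one-line specialization of Lemma \ref{tildek}. The only subtlety to mention explicitly is that $\sgn(K-c)$ is a well-defined constant: since $\Sigma$ is connected (all surfaces in the paper are assumed connected) and $K-c$ is continuous and nowhere zero, it has constant sign. I do not anticipate any real obstacle; the statement is essentially a corollary packaging of the lemma, and the result of Fetcu–Nistor–Oniciuc for type $(8/3,0,c)$ is recovered from item (1) by taking $a = 8/3$, so that the conformal factor is $|K-c|^{3/4}$.

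\begin{proof}
 Both statements follow from Lemma \ref{tildek}.

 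\emph{Item (1).} Assume $a\neq0$ and apply Lemma \ref{tildek} with $\gamma=2/a$. Then $1-\frac{\gamma a}2=0$ and $\frac{\gamma b}2=0$ since $b=0$, so the curvature formula in Lemma \ref{tildek} reads $\tilde K=0$. Hence $(\Sigma,\rmd s^2)$ is a generalized Ricci surface of type $(a,0,c)$ if and only if the metric $|K-c|^{2/a}\rmd s^2$ has curvature identically zero, i.e., is flat.

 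\emph{Item (2).} Assume $b=(2-a)c$ and apply Lemma \ref{tildek} with $\gamma=1$. Using $b=(2-a)c$ we get
 $$\left(1-\frac a2\right)K-\frac b2=\left(1-\frac a2\right)K-\left(1-\frac a2\right)c=\left(1-\frac a2\right)(K-c),$$
 so the curvature formula in Lemma \ref{tildek} gives $\tilde K=\left(1-\frac a2\right)\frac{K-c}{|K-c|}$. Since $\Sigma$ is connected and $K-c$ is a continuous nowhere vanishing function, it has constant sign, so $\frac{K-c}{|K-c|}=\sgn(K-c)$ is the constant $+1$ or $-1$ according to the sign of $K-c$. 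Therefore $(\Sigma,\rmd s^2)$ is a generalized Ricci surface of type $(a,b,c)$ if and only if the metric $|K-c|\rmd s^2$ has constant curvature $\left(1-\frac a2\right)\sgn(K-c)$.
\end{proof}
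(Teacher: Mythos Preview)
Your proof is correct and follows exactly the same approach as the paper: apply Lemma \ref{tildek} with $\gamma=2/a$ for item (1) and with $\gamma=1$ for item (2). You have simply spelled out the algebraic verification in more detail than the paper does.
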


 \begin{proof}
  \begin{enumerate}
   \item Apply Lemma \ref{tildek} with $\gamma=2/a$.
   \item Apply Lemma \ref{tildek} with $\gamma=1$.
  \end{enumerate}

 \end{proof}

The following result for surfaces of type $(a,0,0)$ was already essentially observed by Bernstein and Mettler \cite{bm-jga}. 

\begin{cor} \label{conformalricci}
Let $a\in\R$. Let $(\Sigma,\rmd s^2)$ be a generalized Ricci surface of type $(a,0,0)$ such that $K$ does not vanish identically. Let $\gamma\in\R\setminus\{1\}$ such that $\gamma a\neq2$. Let $\Sigma_*=\{x\in\Sigma\mid K(x)\neq 0\}$. Then $(\Sigma_*,|K|^\gamma\rmd s^2)$ is generalized Ricci surface of type $\left(\frac{2a(1-\gamma)}{2-\gamma a},0,0\right)$ of curvature $\tilde K=\left(1-\frac{\gamma a}2\right)|K|^{-\gamma}K$.
\end{cor}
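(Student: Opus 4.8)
The plan is to read off the curvature formula from Lemma~\ref{tildek} with $b=c=0$ and then to check the generalized Ricci equation for the new metric by an elementary conformal change computation.

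First I would invoke Theorem~\ref{main}: since $K\not\equiv 0$, the zeroes of $K$ are isolated and $K$ does not change sign on $\Sigma$. Hence $\Sigma_*$ is $\Sigma$ with a discrete closed subset removed, so it is a connected Riemann surface, $|K|^\gamma$ is a smooth positive function on it, and $|K|^\gamma\rmd s^2$ is a smooth Riemannian metric there; thus $\Sigma_*$ is the natural domain on which to state the conclusion. Applying Lemma~\ref{tildek} on $\Sigma_*$ with the given $\gamma$ and with $b=c=0$ (legitimate, since $K-c=K$ does not vanish on $\Sigma_*$) gives that the curvature of $\rmd s_2^2:=|K|^\gamma\rmd s^2$ is
$$\tilde K=\Bigl(1-\frac{\gamma a}{2}\Bigr)|K|^{-\gamma}K,$$
which is the asserted value. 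Write $\lambda:=1-\frac{\gamma a}{2}=\frac{2-\gamma a}{2}$, which is nonzero since $\gamma a\neq 2$. Then $\tilde K$ has the fixed sign of $\lambda K$ and never vanishes on $\Sigma_*$, and, since $\gamma\neq 1$, we have $\log|\tilde K|=\log|\lambda|+(1-\gamma)\log|K|$.

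Next I would compute the Laplace--Beltrami operator of $\rmd s_2^2$. Since $\rmd s_2^2=e^{-2f}\rmd s^2$ with $f=-\frac{\gamma}{2}\log|K|$, the conformal change formula recalled in the Terminology subsection gives $\Delta_2=|K|^{-\gamma}\Delta$ on $\Sigma_*$, where $\Delta$ and $\Delta_2$ denote the Laplace--Beltrami operators of $\rmd s^2$ and $\rmd s_2^2$. Using this together with the identity $\Delta\log|K|=aK$ on $\Sigma_*$ (Lemma~\ref{deltalog}), we obtain
$$\Delta_2\log|\tilde K|=|K|^{-\gamma}\,\Delta\bigl((1-\gamma)\log|K|\bigr)=a(1-\gamma)\,|K|^{-\gamma}K.$$
On the other hand, setting $a'=\frac{2a(1-\gamma)}{2-\gamma a}$,
$$a'\tilde K=\frac{2a(1-\gamma)}{2-\gamma a}\cdot\lambda\,|K|^{-\gamma}K=\frac{2a(1-\gamma)}{2-\gamma a}\cdot\frac{2-\gamma a}{2}\,|K|^{-\gamma}K=a(1-\gamma)\,|K|^{-\gamma}K.$$
Therefore $\Delta_2\log|\tilde K|=a'\tilde K$ on $\Sigma_*$, and since $\tilde K$ is nowhere zero there, Lemma~\ref{deltalog} yields that $(\Sigma_*,\rmd s_2^2)$ is a generalized Ricci surface of type $(a',0,0)$, with curvature $\tilde K=\bigl(1-\frac{\gamma a}{2}\bigr)|K|^{-\gamma}K$ as claimed.

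The argument is essentially routine; the only points requiring a little attention are the use of Theorem~\ref{main} to ensure that $\Sigma_*$ is a genuine connected Riemann surface carrying a smooth metric, and the observation that $\tilde K$ does not vanish on $\Sigma_*$ (which uses $\gamma\neq 1$ and $\gamma a\neq 2$), so that the generalized Ricci condition for $\rmd s_2^2$ reduces to the single pointwise identity $\Delta_2\log|\tilde K|=a'\tilde K$ with no vanishing locus to treat separately. As an alternative to the direct computation above, one could also iterate Lemma~\ref{tildek}, applying it to $\rmd s_2^2$ with the exponent $\gamma''=\frac{-\gamma}{1-\gamma}$ so that $|\tilde K|^{\gamma''}\rmd s_2^2$ is a constant multiple of $\rmd s^2$, and then comparing curvatures; this produces the same value of $a'$ but is less transparent.
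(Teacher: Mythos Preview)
Your proof is correct and follows essentially the same route as the paper: apply Lemma~\ref{tildek} with $b=c=0$ to obtain $\tilde K$, then use the conformal Laplacian relation $\tilde\Delta=|K|^{-\gamma}\Delta$ together with Lemma~\ref{deltalog} to verify $\tilde\Delta\log|\tilde K|=a'\tilde K$. One small remark: the nonvanishing of $\tilde K$ on $\Sigma_*$ only requires $\gamma a\neq 2$ (so that $\lambda\neq 0$), not $\gamma\neq 1$; the latter hypothesis is there to ensure $a'\neq 0$ and avoid the degenerate constant-curvature case already covered by Corollary~\ref{constantcurvature}.
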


\begin{proof}
Let $\rmd\tilde s^2=|K|^\gamma\rmd s^2$ on $\Sigma_*$. Denote by $\tilde\Delta$ its Laplace-Beltrami operator and by $\tilde K$ its curvature. By Lemma \ref{tildek} with $b=c=0$ we have $\tilde K=\left(1-\frac{\gamma a}2\right)|K|^{-\gamma}K$. In particular $\tilde K$ does not vanish on $\Sigma_*$. So, using Lemma \ref{deltalog}, we get
$$\tilde\Delta\log|\tilde K|=|K|^{-\gamma}\Delta\log|\tilde K|=|K|^{-\gamma}(1-\gamma)aK=\frac{(1-\gamma)a}{1-\frac{\gamma a}2}\tilde K,$$ which proves the result.
\end{proof}

\begin{rem}
 \begin{enumerate}
  \item If $a\notin\{0,2\}$ then the range of $\frac{2a(1-\gamma)}{2-\gamma a}$ as $\gamma$ varies in $\R\setminus\left\{1,\frac2a\right\}$ is $\R\setminus\{0,2\}$. On the other hand, if $a=0$ (respectively, $a=2$), then $\frac{2a(1-\gamma)}{2-\gamma a}=0$ (respectively, $\frac{2a(1-\gamma)}{2-\gamma a}=2$) for any $\gamma\in\R\setminus\{1\}$.
  \item For $\gamma\neq1$ such that $\gamma a\neq2$, we have $|\tilde K|^{\frac\gamma{\gamma-1}}|K|^\gamma=\left|1-\frac{\gamma a}2\right|^{\frac\gamma{\gamma-1}}$, hence the metric $|\tilde K|^{\frac\gamma{\gamma-1}}|K|^\gamma\rmd s^2$ on $\Sigma_*$ is homothetic to $\rmd s^2$. This provides an involutive correspondence between generalized Ricci surfaces of type $(a,0,0)$ with non vanishing curvature and  generalized Ricci surfaces of type $\left(\frac{2a(1-\gamma)}{2-\gamma a},0,0\right)$ with non vanishing curvature, up to homotheties.
  \item If $\gamma=1$, then the conclusion of Corollary \ref{conformalricci} still holds, but then the metric $|K|\rmd s^2$ has constant curvature by Corollary \ref{constantcurvature}.
 \end{enumerate}

\end{rem}

\begin{cor} \label{conformalricci2bis}
 Let $(a,b,c)\in\R^3$ such that $b\neq(2-a)c$. Let $(\Sigma,\rmd s^2)$ be a generalized Ricci surface of type $(a,b,c)$ such that $K$ is not identically $c$.  Let $\Sigma_*=\{x\in\Sigma\mid K(x)\neq c\}$. Let $\varepsilon=\sgn(K-c)$ (which is constant by Theorem \ref{main}). Then $(\Sigma_*,|K-c|\rmd s^2)$ is generalized Ricci surface of type $$\left(\frac{2(ac+b)}{b+(a-2)c},-\frac{2\varepsilon b}{b+(a-2)c},\varepsilon\left(1-\frac a2\right)\right),$$ and its curvature $\tilde K$ satisfies 
  \begin{equation} \label{dualk}
  \tilde K-\varepsilon\left(1-\frac a2\right)=-\frac{b+(a-2)c}2\varepsilon(K-c)^{-1}.
 \end{equation}
\end{cor}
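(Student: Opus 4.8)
The plan is to mimic the proof of Corollary \ref{conformalricci}, but keeping track of the shift by $c$ and the resulting affine term $b$. Set $\rmd\tilde s^2=|K-c|\rmd s^2$ on $\Sigma_*$, and write $\varepsilon=\sgn(K-c)$, which is indeed constant on all of $\Sigma$ by Theorem \ref{main} (so in particular it is constant on $\Sigma_*$, which is dense). First I would invoke Lemma \ref{tildek} with $\gamma=1$: this gives that $\rmd\tilde s^2$ has curvature
\begin{equation*}
 \tilde K=|K-c|^{-1}\left(\left(1-\frac a2\right)K-\frac b2\right)=\varepsilon(K-c)^{-1}\left(\left(1-\frac a2\right)K-\frac b2\right).
\end{equation*}
A short rearrangement, writing $\left(1-\frac a2\right)K-\frac b2=\left(1-\frac a2\right)(K-c)+\left(1-\frac a2\right)c-\frac b2=\left(1-\frac a2\right)(K-c)-\frac{b+(a-2)c}2$, yields exactly \eqref{dualk}:
\begin{equation*}
 \tilde K-\varepsilon\left(1-\frac a2\right)=-\frac{b+(a-2)c}2\,\varepsilon(K-c)^{-1}.
\end{equation*}

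Next I would verify the generalized Ricci equation for $\rmd\tilde s^2$. Since $b\neq(2-a)c$, i.e. $b+(a-2)c\neq0$, the right-hand side of \eqref{dualk} never vanishes on $\Sigma_*$, so $\tilde K-\tilde c$ is nowhere zero there, where $\tilde c:=\varepsilon\left(1-\frac a2\right)$. Using the conformal change formula $\tilde\Delta=|K-c|^{-1}\Delta$ (from $\rmd\tilde s^2=e^{-2f}\rmd s^2$ with $e^{-2f}=|K-c|$, so $e^{2f}=|K-c|^{-1}$) together with Lemma \ref{deltalog}, I compute
\begin{equation*}
 \tilde\Delta\log|\tilde K-\tilde c|=|K-c|^{-1}\Delta\log\left|\frac{b+(a-2)c}2\varepsilon(K-c)^{-1}\right|=-|K-c|^{-1}\Delta\log|K-c|=-|K-c|^{-1}(aK+b).
\end{equation*}
The remaining task is purely algebraic: express $-|K-c|^{-1}(aK+b)$ as $\tilde a\tilde K+\tilde b$ for the claimed constants $\tilde a=\frac{2(ac+b)}{b+(a-2)c}$ and $\tilde b=-\frac{2\varepsilon b}{b+(a-2)c}$. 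Using \eqref{dualk} to solve for $(K-c)^{-1}$ in terms of $\tilde K$, namely $\varepsilon(K-c)^{-1}=-\frac{2}{b+(a-2)c}\left(\tilde K-\varepsilon\left(1-\frac a2\right)\right)$, and noting $-|K-c|^{-1}(aK+b)=-\varepsilon(K-c)^{-1}(aK+b)$ with $aK+b=a(K-c)+ac+b$, one gets
\begin{equation*}
 -\varepsilon(K-c)^{-1}(aK+b)=-a\varepsilon-\varepsilon(ac+b)(K-c)^{-1}=-a\varepsilon+\frac{2(ac+b)}{b+(a-2)c}\left(\tilde K-\varepsilon\left(1-\frac a2\right)\right),
\end{equation*}
and expanding the constant term should collapse to $\tilde a\tilde K+\tilde b$ after checking that $-a\varepsilon-\frac{2(ac+b)}{b+(a-2)c}\varepsilon\left(1-\frac a2\right)=-\frac{2\varepsilon b}{b+(a-2)c}$, which is a one-line identity in $(a,b,c)$. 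Then Lemma \ref{deltalog} applied in the reverse direction shows $(\Sigma_*,\rmd\tilde s^2)$ is generalized Ricci of the stated type.

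The computation is entirely routine; the only genuine points requiring care are (i) that $\varepsilon$ is legitimately a global constant, which is exactly the content of Theorem \ref{main}, so $\Sigma_*$ is connected-or-not but $\varepsilon$ is well-defined regardless, and (ii) the hypothesis $b\neq(2-a)c$ is precisely what guarantees $\tilde K\neq\tilde c$ on $\Sigma_*$, so that the generalized Ricci equation for the new metric is non-vacuous and Lemma \ref{deltalog} genuinely applies. I expect the main (minor) obstacle to be bookkeeping the sign $\varepsilon$ correctly through $|K-c|^{-1}=\varepsilon(K-c)^{-1}$ and verifying the final scalar identity for $\tilde b$; no analytic difficulty arises since smoothness and the absolute-value-type structure have already been handled by Theorem \ref{main}.
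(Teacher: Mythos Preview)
Your proof is correct and follows essentially the same route as the paper's: apply Lemma \ref{tildek} with $\gamma=1$ to obtain $\tilde K$ and rearrange to \eqref{dualk}, then use the conformal relation $\tilde\Delta=|K-c|^{-1}\Delta$ together with Lemma \ref{deltalog} to compute $\tilde\Delta\log|\tilde K-\tilde c|$ and rewrite it as an affine function of $\tilde K$. The only difference is cosmetic---you spell out the intermediate algebra (the rearrangement yielding \eqref{dualk} and the identity for $\tilde b$) that the paper leaves implicit.
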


\begin{proof}
 Let $\rmd\tilde s^2=|K-c|\rmd s^2$ on $\Sigma_*$. Denote by $\tilde\Delta$ its Laplace-Beltrami operator and by $\tilde K$ its curvature.  By Lemma \ref{tildek} with  $\gamma=1$ we have $$\tilde K=|K-c|^{-1}\left(\left(1-\frac a2\right)K-\frac b2\right)=\varepsilon(K-c)^{-1}\left(\left(1-\frac a2\right)K-\frac b2\right),$$ which gives \eqref{dualk}.
 In particular $\tilde K-\varepsilon\left(1-\frac a2\right)$ does not vanish on $\Sigma_*$. So, using Lemma \ref{deltalog}, we get
 \begin{eqnarray*}
  \tilde\Delta\log\left|\tilde K-\varepsilon\left(1-\frac a2\right)\right| & = &
|K-c|^{-1}\Delta\log\left|\tilde K-\varepsilon\left(1-\frac a2\right)\right| \\
& = & -|K-c|^{-1}\Delta\log|K-c| \\
& = & -\varepsilon(K-c)^{-1}(aK+b) \\
& = & \frac{2(ac+b)}{b+(a-2)c}\tilde K-\frac{2\varepsilon b}{b+(a-2)c},
 \end{eqnarray*}
 which proves the result.
\end{proof}

\begin{rem} \label{rkdual}
 If we apply Corollary \ref{conformalricci2bis} twice, then we get the intial metric up to a homothety; indeed by \eqref{dualk} we have
 $$\left|\tilde K-\varepsilon\left(1-\frac a2\right)\right||K-c|\rmd s^2=\left|\frac{b+(a-2)c}2\right|\rmd s^2.$$
\end{rem}

We end this section by generalizations of results by A. Moroianu and S. Moroianu \cite{moroianu}, which will be very useful to construct compact generalized Ricci surfaces of type $(a,0,0)$.
 
\begin{prop} \label{propV} 
    Let $\rmd\sigma_0^2$ be a flat metric on a Riemannian surface $\Sigma$ and $V:\Sigma\to\R_+^*$ be a smooth function such that $V\rmd\sigma_0^2$ is a metric of constant curvature $\kappa\neq 0$. Then $\rmd s^2_+=V^{-\frac1{|\kappa|}}\rmd\sigma_0^2$ is a generalized Ricci metric of type $(2+2|\kappa|,0,0)$ with curvature $-\sgn(\kappa)V^{\frac{|\kappa|+1}{|\kappa|}}$, and $\rmd s^2_-=V^{\frac1{|\kappa|}}\rmd\sigma_0^2$ is a generalized Ricci metric of type $(2-2|\kappa|, 0, 0)$ with  curvature $\sgn(\kappa)V^{\frac{|\kappa|-1}{|\kappa|}}$.
\end{prop}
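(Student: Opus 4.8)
The plan is to reduce the whole statement to the single identity $\Delta_0\log V=-2\kappa V$, where $\Delta_0$ denotes the Laplace--Beltrami operator of $\rmd\sigma_0^2$, and then to read off both curvatures and both generalized Ricci equations from the conformal change formulas recalled in the preliminaries.

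First I would record what the hypotheses on $V$ say. Since $\rmd\sigma_0^2$ is flat, its curvature vanishes; writing $V\rmd\sigma_0^2=e^{-2f}\rmd\sigma_0^2$ with $f=-\tfrac12\log V$ and using $K_2=e^{2f}(K_1+\Delta_1 f)$ gives $\kappa=-\tfrac1{2V}\Delta_0\log V$, that is, $\Delta_0\log V=-2\kappa V$. Next, writing $\rmd s^2_\pm=e^{-2f_\pm}\rmd\sigma_0^2$ with $f_\pm=\pm\tfrac1{2|\kappa|}\log V$ (so that $e^{2f_+}=V^{1/|\kappa|}$ and $e^{2f_-}=V^{-1/|\kappa|}$), the same formula together with this identity yields at once $K_+=V^{1/|\kappa|}\Delta_0 f_+=-\sgn(\kappa)\,V^{(|\kappa|+1)/|\kappa|}$ and $K_-=V^{-1/|\kappa|}\Delta_0 f_-=\sgn(\kappa)\,V^{(|\kappa|-1)/|\kappa|}$, which are the announced curvatures. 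In particular, as $V>0$, neither $K_+$ nor $K_-$ vanishes anywhere on $\Sigma$.

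To conclude that $\rmd s^2_+$ and $\rmd s^2_-$ are generalized Ricci metrics I see two equivalent routes. The direct one: by Lemma \ref{deltalog} it suffices to check $\Delta_\pm\log|K_\pm|=(2\pm2|\kappa|)K_\pm$ on all of $\Sigma$; writing $\Delta_\pm=e^{2f_\pm}\Delta_0$ and $\log|K_\pm|=\tfrac{|\kappa|\pm1}{|\kappa|}\log V+\mathrm{const}$ and inserting $\Delta_0\log V=-2\kappa V$, both sides reduce to $-2\sgn(\kappa)(|\kappa|\pm1)\,V^{(|\kappa|\pm1)/|\kappa|}$, so they agree (and both vanish when $|\kappa|=1$). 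The shortcut: since $|K_+|^{2/(2+2|\kappa|)}\rmd s^2_+=V^{1/|\kappa|}V^{-1/|\kappa|}\rmd\sigma_0^2=\rmd\sigma_0^2$ is flat, and likewise $|K_-|^{2/(2-2|\kappa|)}\rmd s^2_-=\rmd\sigma_0^2$ when $|\kappa|\neq1$, the claim follows from Corollary \ref{constantcurvature}(1); in the remaining case $|\kappa|=1$ the curvature $K_-=\sgn(\kappa)$ is constant, and one invokes Remark \ref{kappa} instead.

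There is essentially no hard step: the whole argument is the identity $\Delta_0\log V=-2\kappa V$ plus bookkeeping with the conformal change formulas. The only points requiring a little care are the placement of the absolute values, the sign factor $\sgn(\kappa)$, and (in the shortcut only) the degenerate value $|\kappa|=1$, which the direct route absorbs automatically.
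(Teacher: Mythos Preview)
Your direct route is exactly the paper's proof: derive $\Delta_0\log V=-2\kappa V$ from the hypothesis, compute $K_\pm$ via the conformal change formula, and then verify $\Delta_\pm\log|K_\pm|=(2\pm2|\kappa|)K_\pm$ using $\Delta_\pm=e^{2f_\pm}\Delta_0$. Your additional shortcut through Corollary~\ref{constantcurvature}(1) (with Remark~\ref{kappa} for the degenerate case $|\kappa|=1$) is a valid alternative that the paper does not mention; it trades the explicit Laplacian computation for the observation that $|K_\pm|^{2/(2\pm2|\kappa|)}\rmd s^2_\pm=\rmd\sigma_0^2$, which is perhaps conceptually cleaner but requires the case split.
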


\begin{proof}
     We let $\Delta_0$ and $\Delta_+$ denote the Laplacian-Beltrami operators of $\rmd\sigma_0^2$ and $\rmd s^2_+$ respectively, and $K_+$ the curvature of $\rmd s^2_+$. Since
    $$\kappa = -V^{-1}\Delta_0\left(\frac12\log{V}\right)$$
    and 
    $$K_+ = V^{\frac{1}{|\kappa|}}\Delta_0\left(\frac{1}{2|\kappa|}\log{V}\right)$$
     we get
    $$K_+=-\sgn(\kappa)V^{\frac{|\kappa|+1}{|\kappa|}}.$$

  Newt, we compute that
  \begin{eqnarray*}
   \Delta_+\log |K_+| & = & V^{\frac{1}{|\kappa|}}\Delta_0\log |K_+|
       = \frac{|\kappa|+1}{|\kappa|}V^{\frac{1}{|\kappa|}}\Delta_0\left(\log{V}\right) \\
    &   =  &\frac{|\kappa|+1}{|\kappa|}V^{\frac{1}{|\kappa|}}\cdot (-2\kappa V)
       =2(|\kappa|+1)K_+.
  \end{eqnarray*}
    This completes the proof for the metric $\rmd s^2_+$. The properties of $\rmd s^2_-$ can be proved in the same way.
\end{proof}

In order to get generalized Ricci metrics of type $(a,0,0)$ whose curvature may vanish, we need to consider constant curvature metrics with conical singularities.

\begin{prop} \label{propVconical}
Let $a\in(0,2)\cup(2,+\infty)$ and $m\in\N^*$. Let $U\subset\C$ be an open set containing $0$. 

Let $v\in\cC^\infty(U,\R)$ be a function such that $\rmd\sigma^2=e^{2v}|z|^{2m}|\rmd z|^2$ is a metric of constant curvature $\frac a2-1$ on $U\setminus\{0\}$ (hence with a conical singularity of order $m$ at $0$). 

Let $u\in\cC^\infty(U,\R)$ be a function such that $\rmd\sigma_0^2=e^{2u}|z|^{\frac{4m}a}|\rmd z|^2$ is a flat metric on $U\setminus\{0\}$ (hence with a conical singularity of order $\frac{2m}a$ at $0$). 

Let $V\in\cC^\infty(U\setminus\{0\},\R)$ be the function such that $\rmd\sigma^2=V\rmd\sigma^2_0$. Then $\rmd s^2=V^{\frac{2}{2-a}}\rmd\sigma^2_0$ extends to a generalized Ricci metric on $U$ of type $(a,0,0)$ whose curvature $K$ is such that $K<0$ on $U\setminus\{0\}$ and $\sqrt{|K|}$ has a zero of order $m$ at $0$.
\end{prop}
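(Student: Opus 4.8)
The plan is to compute everything explicitly in the conformal coordinate $z$ and then invoke Proposition \ref{propV} (in its local form) together with the conical singularity regularity discussion from the preliminaries. First I would write all three metrics conformally with respect to $|\rmd z|^2$: we have $\rmd\sigma^2=e^{2v}|z|^{2m}|\rmd z|^2$ of curvature $\frac a2-1$, $\rmd\sigma_0^2=e^{2u}|z|^{4m/a}|\rmd z|^2$ flat, and therefore $V=\rmd\sigma^2/\rmd\sigma_0^2=e^{2v-2u}|z|^{2m-4m/a}$ on $U\setminus\{0\}$. Then $\rmd s^2=V^{2/(2-a)}\rmd\sigma_0^2=e^{2w}|z|^{2\beta}|\rmd z|^2$ with a straightforward computation of $w$ and of the exponent $\beta=\frac{2}{2-a}\bigl(2m-\frac{4m}{a}\bigr)+\frac{4m}{a}$; the point of the particular exponents chosen in the statement is precisely that $\beta=m$, so $\rmd s^2$ has (at worst) a conical singularity of order $m$ at $0$, and since $m\in\N^*$ the regularity facts recalled in the subsection on metrics with conical singularities (the case $\beta\in\N^*$) show that $\rmd s^2$ extends to a \emph{smooth} metric across $0$, with $\sqrt{|K|}$ vanishing to order exactly $m$ there.

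Next, on $U\setminus\{0\}$ I would apply the local version of Proposition \ref{propV}: there $\rmd\sigma_0^2$ is flat and $V\rmd\sigma_0^2=\rmd\sigma^2$ has constant curvature $\kappa=\frac a2-1\neq0$ (this is where the hypothesis $a\in(0,2)\cup(2,+\infty)$ enters, guaranteeing $\kappa\neq0$). One checks $\frac{2}{2-a}=-\frac{1}{|\kappa|}\sgn(\kappa)\cdot\bigl(\text{sign bookkeeping}\bigr)$; more precisely, if $a>2$ then $\kappa>0$ and $\frac{2}{2-a}=-\frac{1}{|\kappa|}$, so $\rmd s^2=V^{-1/|\kappa|}\rmd\sigma_0^2=\rmd s_+^2$, giving type $(2+2|\kappa|,0,0)=(2+|a-2|\cdot 2/1,0,0)$; since $|a-2|=a-2$ this is $(a,0,0)$, with curvature $-\sgn(\kappa)V^{(|\kappa|+1)/|\kappa|}<0$. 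If $a\in(0,2)$ then $\kappa<0$, $|\kappa|=1-\frac a2$, and $\frac{2}{2-a}=\frac{1}{|\kappa|}$, so $\rmd s^2=V^{1/|\kappa|}\rmd\sigma_0^2=\rmd s_-^2$, giving type $(2-2|\kappa|,0,0)=(a,0,0)$ with curvature $\sgn(\kappa)V^{(|\kappa|-1)/|\kappa|}=-V^{(|\kappa|-1)/|\kappa|}<0$ (note $|\kappa|-1<0$, but the sign of the curvature is $\sgn(\kappa)=-1$). Either way, $\rmd s^2$ is a generalized Ricci metric of type $(a,0,0)$ on $U\setminus\{0\}$ with $K<0$ there.

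Finally I would argue that the equation \eqref{ricci} defining a generalized Ricci surface, being an identity between continuous functions on $U$ that holds on the dense open set $U\setminus\{0\}$, holds on all of $U$ by continuity once we know $\rmd s^2$ extends smoothly; hence $(U,\rmd s^2)$ is a generalized Ricci surface of type $(a,0,0)$. Since $K<0$ on $U\setminus\{0\}$ and $K(0)$ is the limit, a priori $K(0)\leq 0$; but from $\sqrt{|K|}=|K|^{1/2}=e^{w}\cdot(\text{holomorphic-type factor})$ vanishing to order $m$ at $0$, we get $K(0)=0$ and the stated order. The main obstacle, and the place requiring genuine care, is the regularity step: verifying that with $\beta=m\in\N^*$ the function $w$ (equivalently the profile $u_1$ in the notation of the conical-singularities subsection) is actually $\cC^\infty$ at $0$ — this uses that $e^{2w}$ solves the prescribed-curvature PDE $w_{z\bar z}=-\frac K4 e^{2w}|z|^{2m}$ with $K$ itself a power of a smooth positive function times $|z|^{2m}$, so one bootstraps elliptic regularity exactly as in the recalled case $\kappa$ constant, $\beta\in\N^*$; the nonconstancy of $K$ here is harmless since $K$ is smooth and bounded. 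Everything else is bookkeeping of exponents and signs.
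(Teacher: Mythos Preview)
Your overall strategy---apply Proposition \ref{propV} on $U\setminus\{0\}$ and then verify smoothness at $0$---is exactly the paper's. However, your key exponent computation is wrong, and this sends you down an unnecessarily complicated (and in fact flawed) path. You claim $\beta=m$, but in fact
\[
\frac{2}{2-a}\Bigl(2m-\frac{4m}{a}\Bigr)+\frac{4m}{a}
=\frac{2}{2-a}\cdot\frac{2m(a-2)}{a}+\frac{4m}{a}
=-\frac{4m}{a}+\frac{4m}{a}=0.
\]
So the power of $|z|$ in $\rmd s^2$ vanishes entirely: one gets $\rmd s^2=e^{2w}|\rmd z|^2$ with $w=\dfrac{2v-au}{2-a}$, which is manifestly smooth on all of $U$ since $u,v\in\cC^\infty(U,\R)$ by hypothesis. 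That is the whole content of the paper's regularity step---no conical singularity, no bootstrap. For the order of the zero, the paper then reads off $K=-V^{a/(a-2)}$ from Proposition \ref{propV} and uses $V=e^{2(v-u)}|z|^{2m(a-2)/a}$ to get $|K|=e^{\frac{2a(v-u)}{a-2}}|z|^{2m}$, so $\sqrt{|K|}$ has a zero of order $m$.

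Your proposed workaround would not have succeeded even if $\beta=m$ had been correct. A metric with a conical singularity of order $m\geqslant1$ is \emph{degenerate} at the cone point (the conformal factor $|z|^{2m}$ vanishes there); the regularity discussion you cite only says the profile $u_1$ is smooth, not that $e^{2u_1}|z|^{2m}|\rmd z|^2$ is a smooth Riemannian metric. And the bootstrap you sketch presupposes that $K$ is smooth at $0$, which requires the smoothness of the very metric you are trying to establish---a circularity. Redo the arithmetic and the proof collapses to three lines.
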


\begin{proof}
Let $\kappa=\frac a2-1$. By Proposition \ref{propV} (consider $\rmd s^2_+$ if $a>2$ and $\rmd s^2_-$ if $a\in(0,2)$), $\rmd s^2$ is a generalized Ricci metric on $U\setminus\{0\}$ of type $(a,0,0)$. We will now check that $\rmd s^2$ extends smoothly at $0$.

On $U\setminus\{0\}$, the function $V$ can be determined as $$V=e^{2v-2u}|z|^{\left(2-\frac4a\right)m}.$$
   So, a direct computation shows that $$\rmd s^2 = e^{\frac4{2-a}(v-au)}|\rmd z|^2$$ on $U\setminus\{0\}$. 
   
    Therefore, since $u$ and $v$ are smooth at the origin, $\rmd s^2$ extends smoothly on $U$. 
    
    Finally, by Proposition \ref{propV}, the curvature of $\rmd s^2$ on $U\setminus\{0\}$ is $$K=-V^{\frac a{a-2}}<0.$$ Considering the above expression of $V$ on $U\setminus\{0\}$, we obtain that $K$ has a zero of order $2m$ at $0$. This concludes the proof.
\end{proof}

\begin{prop} \label{propVconical2}
Let $a<0$ and $m\in\N^*$. Let $U\subset\C$ be an open set containing $0$. 

Let $v\in\cC^\infty(U,\R)$ be a function such that $\rmd\sigma^2=e^{2v}|z|^{2m}|\rmd z|^2$ is a metric of constant curvature $1-\frac a2$ on $U\setminus\{0\}$ (hence with a conical singularity of order $m$ at $0$). 

Let $u\in\cC^\infty(U,\R)$ be a function such that $\rmd\sigma_0^2=e^{2u}|z|^{\frac{4m}a}|\rmd z|^2$ is a flat metric on $U\setminus\{0\}$. 

Let $V\in\cC^\infty(U\setminus\{0\},\R)$ be the function such that $\rmd\sigma^2=V\rmd\sigma^2_0$. Then $\rmd s^2=V^{\frac{2}{2-a}}\rmd\sigma^2_0$ extends to a generalized Ricci metric on $U$ of type $(a,0,0)$ whose curvature $K$ is such that $K>0$ on $U\setminus\{0\}$ and $\sqrt K$ has a zero of order $m$ at $0$.
\end{prop}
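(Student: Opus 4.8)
The plan is to follow the proof of Proposition \ref{propVconical} almost verbatim, adjusting only the sign bookkeeping that comes from the new constant curvature value $1-\frac a2$ and the fact that $a<0$. Set $\kappa=1-\frac a2>0$, so that $|\kappa|=1-\frac a2$ and $2-2|\kappa|=a$. Then $\rmd\sigma^2=e^{2v}|z|^{2m}|\rmd z|^2$ has constant curvature $\kappa$, and by Proposition \ref{propV} applied with the metric $\rmd s^2_-=V^{\frac1{|\kappa|}}\rmd\sigma_0^2$, the metric $V^{\frac1{|\kappa|}}\rmd\sigma_0^2$ is a generalized Ricci metric on $U\setminus\{0\}$ of type $(2-2|\kappa|,0,0)=(a,0,0)$ with curvature $\sgn(\kappa)V^{\frac{|\kappa|-1}{|\kappa|}}=V^{\frac{|\kappa|-1}{|\kappa|}}>0$. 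Since $\frac1{|\kappa|}=\frac{2}{2-a}$ and $\frac{|\kappa|-1}{|\kappa|}=\frac{-a/2}{1-a/2}=\frac a{a-2}$, this says exactly that $\rmd s^2=V^{\frac2{2-a}}\rmd\sigma_0^2$ is a generalized Ricci metric of type $(a,0,0)$ on $U\setminus\{0\}$ with curvature $K=V^{\frac a{a-2}}>0$.

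Next I would check that $\rmd s^2$ extends smoothly across $0$, exactly as in Proposition \ref{propVconical}. On $U\setminus\{0\}$ we have $\rmd\sigma^2=V\rmd\sigma_0^2$ with $\rmd\sigma^2=e^{2v}|z|^{2m}|\rmd z|^2$ and $\rmd\sigma_0^2=e^{2u}|z|^{\frac{4m}a}|\rmd z|^2$, hence
$$V=e^{2v-2u}|z|^{2m-\frac{4m}a}=e^{2v-2u}|z|^{\left(2-\frac4a\right)m}.$$
Therefore
$$\rmd s^2=V^{\frac2{2-a}}\rmd\sigma_0^2=e^{\frac4{2-a}(v-u)}|z|^{\frac{2}{2-a}\left(2-\frac4a\right)m}\cdot e^{2u}|z|^{\frac{4m}a}|\rmd z|^2.$$
A direct computation of the exponent of $|z|$ gives $\frac{2}{2-a}\left(2-\frac4a\right)m+\frac{4m}a=\frac{4m}{2-a}\cdot\frac{a-2}a+\frac{4m}a=-\frac{4m}a+\frac{4m}a=0$, and the exponent of $e$ is $\frac4{2-a}(v-u)+2u=\frac4{2-a}v+\left(2-\frac4{2-a}\right)u=\frac4{2-a}v-\frac{2a}{2-a}u=\frac4{2-a}(v-au)/2$; more cleanly, $\rmd s^2=e^{\frac4{2-a}(v-au)}|\rmd z|^2$ on $U\setminus\{0\}$ (the same expression as in Proposition \ref{propVconical}, since the algebra does not see the value of $\kappa$). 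Because $u,v\in\cC^\infty(U,\R)$, the function $\frac4{2-a}(v-au)$ is smooth on all of $U$, so $\rmd s^2$ extends to a smooth metric on $U$, and then it is a generalized Ricci metric of type $(a,0,0)$ on $U$ by continuity of equation \eqref{ricci}.

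Finally I would determine the curvature and the order of its zero. On $U\setminus\{0\}$, $K=V^{\frac a{a-2}}$; since $a<0$ and $a-2<0$ we have $\frac a{a-2}>0$, and $V>0$, so $K>0$ on $U\setminus\{0\}$. Substituting the expression for $V$, $K=e^{\frac a{a-2}(2v-2u)}|z|^{\frac a{a-2}\left(2-\frac4a\right)m}$, and the exponent of $|z|$ is $\frac a{a-2}\cdot\frac{2a-4}a\,m=\frac{2(a-2)}{a-2}m=2m$, while the $e$-factor is smooth and positive; hence $K$ has a zero of order $2m$ at $0$, so $\sqrt K$ has a zero of order $m$ at $0$. (By Theorem \ref{main}, $\sqrt K=\sqrt{|K-0|}$ is of absolute value type, so this order is well defined.) The only point requiring a little care — the ``main obstacle,'' such as it is — is the smooth extension across the conical singularity: one must verify that the exponents of $|z|$ cancel exactly so that no fractional power of $|z|$ survives, which is the reason the exponents $2m$, $\frac{4m}a$ and $\frac2{2-a}$ were chosen as they are; everything else is identical to Proposition \ref{propVconical} with the single substitution $\kappa=1-\frac a2$.
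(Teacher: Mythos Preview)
Your proposal is correct and follows exactly the paper's own proof, which simply reads ``Same as the proof of Proposition \ref{propVconical} with $\kappa=1-\frac a2$; here the formula for the curvature is $K=V^{\frac a{a-2}}$.'' Your detailed verification of the exponent cancellation and of the order of the zero of $K$ is precisely what that one-line proof unpacks to (the small arithmetic wobble in simplifying $\frac4{2-a}v-\frac{2a}{2-a}u$ is immaterial, since all that is needed is that the conformal factor is a smooth function of $u$ and $v$).
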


\begin{proof}
Same as the proof of Proposition \ref{propVconical} with $\kappa=1-\frac a2$. Here, the formula for the curvature is $K=V^{\frac a{a-2}}$.  
\end{proof}

\begin{rem} \label{rknotsmooth}
 If we replace $m\in\N^*$ in Proposition \ref{propVconical} or \ref{propVconical2} by $\beta>0$, then $v$ is not of class $\cC^\infty$ in general but at least of class $\cC^2$; then $\rmd s^2$ is a generalized Ricci metric on $U\setminus\{0\}$ of type $(a,0,0)$ and it extends to a $\cC^2$ metric on $U$ but not necessarily smooth on $U$.
\end{rem}

\section{Conformal parametrizations} \label{sec:param}

In this section we establish some useful equations using a conformal coordinate. We then establish a connection with Toda systems in some particular cases.

\subsection{Generalities} \label{sec:conformal}

It is useful to state the following result, which essentially follows from the proof of Theorem \ref{main}.

\begin{lemma} \label{lemmaholo}
 Let $(a,c)\in\R^2$ and $\varepsilon\in\{-1,1\}$. 
 \begin{enumerate}
  \item Let $b\in\R$. Let $(\Sigma,\rmd s^2)$ be a smooth Riemannian surface. Then $(\Sigma,\rmd s^2)$ is a generalized Ricci surface of type $(a,b,c)$ with $K\equiv c$ or $\sgn(K-c)=\varepsilon$ if and only if each point of $\Sigma$ admits an open neighborhood $U$ biholomorphic to an open subset of $\C$ with the following property: if $z$ is a conformal coordinate on $U$ and $\rmd s^2=e^{-2f}|\rmd z|^2$ on $U$ with $f:U\to\R$ smooth, then there exist
  \begin{itemize}
   \item a holomorphic function $h:U\to\C$,
   \item and, when $b\neq0$, a smooth function $\varphi:U\to\R$
  \end{itemize}
such that the following equations hold on $U$: 
  \begin{equation} \label{holomorphic}
 e^{-af}e^{-b\varphi}(K-c)=\varepsilon|h|^2.
  \end{equation}
  and, when $b\neq0$, $$\Delta\varphi=1.$$
  \item  Let $U\subset\C$ be a simply connected open subset and let $f:U\to\R$ be a smooth function. Then $e^{-2f}|\rmd z|^2$ is a generalized Ricci metric on $U$ of type $(a,0,c)$ with $K\equiv c$ or $\sgn(K-c)=\varepsilon$ if only if there exists a holomorphic function $h:U\to\C$ 
  such 
  \begin{equation} \label{holomorphic0}
 e^{-af}(K-c)=\varepsilon|h|^2
  \end{equation}
  on $U$. If this holds, then the holomorphic function $h$ is unique up to multiplication by a complex number of modulus $1$.
 \end{enumerate}
\end{lemma}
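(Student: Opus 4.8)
\textbf{Plan of proof for Lemma \ref{lemmaholo}.}

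The plan is to reduce everything to the key computation already performed in the proof of Theorem \ref{main}: on an open set where $\rmd s^2=e^{-2f}|\rmd z|^2$ with $f$ smooth, one has $\Delta f=K$ (this is the standard relation $K=e^{2f}\cdot(-\Delta_{\mathrm{eucl}}(-f))$, equivalently $K = 2e^{2f} f_{z\bar z}$, recorded implicitly in Section \ref{sec:generalities}), so that for any smooth $\varphi$ with $\Delta\varphi=1$ the function $F:=e^{-af}e^{-b\varphi}(K-c)$ is log-harmonic on $U$ precisely when $\Delta\log|K-c|=aK+b$ away from the zeroes of $K-c$, i.e. (by Lemma \ref{deltalog}) precisely when $(\Sigma,\rmd s^2)$ is a generalized Ricci surface of type $(a,b,c)$. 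So the content is: \emph{log-harmonic with a fixed sign $\varepsilon$} $\iff$ \emph{locally of the form $\varepsilon|h|^2$ with $h$ holomorphic}.

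For item (1), I would argue in two directions. ($\Rightarrow$) Given a generalized Ricci surface with $K\equiv c$ or $\sgn(K-c)=\varepsilon$, pick any point, a neighborhood $U$ biholomorphic to $\D$, a conformal coordinate $z$, and write $\rmd s^2=e^{-2f}|\rmd z|^2$. When $b\neq 0$, solve $\Delta\varphi=1$ on $U$ (possible after shrinking $U$, exactly as in Theorem \ref{main}); when $b=0$ take $\varphi=0$ so no auxiliary function is needed. Then $F=e^{-af}e^{-b\varphi}(K-c)$ is smooth, log-harmonic, and of constant sign $\varepsilon$ (or identically $0$). If $F\equiv 0$ then $K\equiv c$ and $h\equiv 0$ works; otherwise by \cite[Lemma 4.5]{moroianu}, on the simply connected $U$ the function $\varepsilon F\geqslant 0$ can be written as $|h|^2$ for some holomorphic $h:U\to\C$ (this is exactly where the log-harmonic structure theorem is used — $\log(\varepsilon F)$ is harmonic off a discrete set and has the right local behaviour, so it is the real part of $2\log h$). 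This gives \eqref{holomorphic}. ($\Leftarrow$) Conversely, if every point has such a neighborhood with \eqref{holomorphic} (and $\Delta\varphi=1$ when $b\neq0$), then on each $U$, away from the zeroes of $h$, $\log|K-c| = af + b\varphi + 2\log|h|$, so $\Delta\log|K-c| = a\Delta f + b\Delta\varphi + 2\Delta\log|h| = aK + b + 0$; by Lemma \ref{deltalog} this makes $(\Sigma,\rmd s^2)$ a generalized Ricci surface of type $(a,b,c)$, and \eqref{holomorphic} forces $\varepsilon(K-c)=e^{af}e^{b\varphi}|h|^2\geqslant 0$, hence $K\equiv c$ or $\sgn(K-c)=\varepsilon$.

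For item (2), the hypothesis is stronger in two ways: $b=0$, so no $\varphi$ enters, and $U$ itself is simply connected, so one does not need to pass to a smaller neighborhood — the statement becomes a genuine ``if and only if'' on all of $U$ rather than a local one. The ($\Leftarrow$) direction is the same Laplacian computation as above with $\varphi$ absent. For ($\Rightarrow$), $F=e^{-af}(K-c)$ is log-harmonic on the simply connected $U$ with constant sign $\varepsilon$ (or $\equiv 0$), so \cite[Lemma 4.5]{moroianu} applies directly on $U$ to give the holomorphic $h$ with $e^{-af}(K-c)=\varepsilon|h|^2$. For the uniqueness clause: if $\varepsilon|h_1|^2=\varepsilon|h_2|^2$ on $U$ with $h_1,h_2$ holomorphic, then $|h_1|=|h_2|$, so $h_1/h_2$ is a holomorphic function of modulus $1$ (away from common zeroes, which are removable since both sides vanish to the same order) on the connected $U$, hence a unimodular constant by the maximum principle / open mapping theorem.

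The main obstacle is purely bookkeeping rather than conceptual: correctly invoking \cite[Lemma 4.5 and Theorem 4.6]{moroianu} to pass from ``log-harmonic of constant sign $\varepsilon$'' to ``$\varepsilon|h|^2$'' on a simply connected set, and handling the two regimes $b=0$ versus $b\neq 0$ uniformly (in the former case suppressing $\varphi$ entirely, in the latter solving $\Delta\varphi=1$ locally and only then shrinking $U$). One should also be careful that in item (1) the neighborhood $U$ is only claimed to exist, so shrinking is harmless, whereas in item (2) $U$ is given and simply connected, which is exactly what lets the conclusion hold globally on $U$ and makes the uniqueness statement meaningful.
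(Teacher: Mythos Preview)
Your plan is correct and essentially identical to the paper's own argument: item (1) forward is exactly the computation from the proof of Theorem \ref{main}, item (1) backward is the direct Laplacian verification you wrote out (which the paper dismisses as ``a direct verification''), and the uniqueness clause in (2) is precisely the $|h_1/h_2|=1$ maximum-principle argument (which the paper calls ``obvious'').

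The one point where the paper proceeds slightly differently is the forward direction of item (2). You invoke \cite[Lemma 4.5]{moroianu} directly on the given simply connected $U$; the paper instead first records that $\sqrt{\varepsilon e^{-af}(K-c)}$ is log-harmonic and of \emph{absolute value type} (a local fact, inherited from Theorem \ref{main}) and then appeals to \cite[Lemma 3.12]{egt}, which globalizes an absolute-value-type log-harmonic function to $|h|$ on any simply connected domain. Your route is fine provided \cite[Lemma 4.5]{moroianu} is stated for arbitrary simply connected domains with possibly many zeros; the paper's route via \cite{egt} sidesteps that question. (Minor typo: $K=4e^{2f}f_{z\bar z}$, not $2$.)
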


\begin{proof}
 The direct implication in (1) follows from the proof of Theorem \ref{main} and the converse implication is a direct verification. The converse implication in (2) is a particular case of the converse implication in (1) and the last affirmation is obvious, so it remains to prove the direct implication in (2). The case where $K\equiv c$ is obvious with $h\equiv0$.
 
 We assume that $U\subset\C$ is a simply connected open subset, $f:U\to\R$ is a smooth function and $e^{-2f}|\rmd z|^2$ is a generalized Ricci metric on $U$ of type $(a,0,c)$ with $\sgn(K-c)=\varepsilon$. As seen in the proof of Theorem \ref{main}, the function $\sqrt{e^{-af}(K-c)}$ is log-harmonic and of absolute value type on $U$. Hence by \cite[Lemma 3.12]{egt} there exists a holomorphic function $h:U\to\C$ such that $\sqrt{e^{-af}(K-c)}=|h|$ on $U$, which implies \eqref{holomorphic0}.
\end{proof}

On an open set $U\subset\C$, since $K=4e^{2f}f_{z\bar z}$, the equations of Lemma \ref{lemmaholo} (1) read 
\begin{equation} \label{system}
\left\{\begin{array}{ccl} 
f_{z\bar z} & = & \displaystyle{\frac c4e^{-2f}+\frac\varepsilon4|h|^2e^{(a-2)f+b\varphi}} \\
\varphi_{z\bar z} & = & \displaystyle{\frac14e^{-2f}}
         \end{array}
\right.
\end{equation}

If we make a conformal change of variable $z=\psi(w)$ where $\psi:V\to U$ is a biholomorphism, writing $\rmd s^2=e^{-2F}|\rmd w|^2$ we have
$$e^{-f\circ\psi}|\psi'|=e^{-F},$$
$$e^{2F}F_{w\bar w}=e^{2f\circ\psi}f_{z\bar z}\circ\psi,$$
$$\left\{\begin{array}{ccl} 
F_{w\bar w} & = & \displaystyle{\frac c4e^{-2F}+\frac\varepsilon4|\psi'|^a|h\circ\psi|^2e^{(a-2)F+b\varphi\circ\psi}} \\
(\varphi\circ\psi)_{w\bar w} & = & \displaystyle{\frac14e^{-2F}}
         \end{array}
\right.$$ Hence, if $U$ is simply connected, this yields the changes 
\begin{equation} \label{changevar}
(f,\varphi,h)\longrightarrow(f\circ\psi-\log|\psi'|,\varphi\circ\psi,(\psi')^{a/2}(h\circ\psi)).
\end{equation}

If we set $u_1=-2f$ and $u_2=(a-2)f+b\varphi$, then \eqref{system} becomes
\begin{equation} \label{system2}
\left\{\begin{array}{ccl} 
(u_1)_{z\bar z} & = & \displaystyle{-\frac c2e^{u_1}-\frac\varepsilon2|h|^2e^{u_2}} \\
(u_2)_{z\bar z} & = & \displaystyle{\frac{b+(a-2)c}4e^{u_1}+\frac{\varepsilon(a-2)}4|h|^2e^{u_2}}
         \end{array}
\right.
\end{equation}

If $b\neq0$, then \eqref{system2} is equivalent to \eqref{system}.

If $b=0$, then it is useless to consider the function $\varphi$ and the second equation in \eqref{system}; on the other hand, the two lines of \eqref{system2} are proportional; we may choose $u_2=\left(1-\frac a2\right)u_1$, and then the first equation in \eqref{system2} is equivalent to the first equation in \eqref{system}.

We remark that $\rmd s^2=e^{u_1}|\rmd z|^2$ and $|K-c|\rmd s^2=|h|^2e^{u_2}|\rmd z|^2$ (this metric is considered in Corollaries \ref{constantcurvature} and \ref{conformalricci2bis}).

\begin{rem} \label{rkholodiff}
If $b=0$, $a\in2\N^*$ and $\Sigma$ is simply connected, then Lemma \ref{lemmaholo} and the formulas for a conformal change of variable show that $h\rmd z^{a/2}$ defines a holomorphic differential of weight $a/2$ (holomorphic $a/2$-differential) on $\Sigma$; it is unique up to multiplication by a complex number of modulus $1$. In particular, if $\Sigma$ is a sphere it vanishes identically and so $K\equiv c$ (see also Proposition \ref{sphere}).
\end{rem}

\begin{rem} \label{rkxi}
Assume that $U$ is simply connected, $a\neq0$ and that $h$ does not vanish on $U$. If $\xi\in\R^*$ has the same sign as $\varepsilon$, then we may choose $\psi$ such that $|(\psi')^{a/2}(h\circ\psi)|^2=\varepsilon\xi$. So, away from the zeroes of $K-c$, it is possible locally to choose a conformal coordinate so that  $|h|^2=\varepsilon\xi$.
\end{rem}

\begin{rem} \label{rktorus}
 Assume that $\rmd s^2$ is a generalized Ricci metric of type $(a,0,c)$ on a torus $\C/\Gamma$, where $\Gamma\subset\C$ is a lattice. Then we can lift the metric $\rmd s^2$ on $\C$, and then Lemma \ref{lemmaholo} gives the existence of a holomorphic function $h$ on $\C$ such that \eqref{holomorphic0} holds on $\C$. Then $|h|$ is doubly periodic; hence $h$ is constant. This constant is $0$ if and only if $c=0$.
\end{rem}

\subsection{Link with Toda systems} \label{toda}

For some particular values of $a$, $b$, $c$ and $\varepsilon$ and a suitable choice of local conformal coordinate away from zeroes of $K-c$, \eqref{system2} turns out to be a \emph{Toda system}. There is a huge literature about Toda systems, in particular from the point of view of integrable systems. Minimal surfaces, and more generally harmonic maps, are related to Toda systems in many situations; see for instance \cite{bpw}. We also refer to \cite[Section 6.2]{yang} and \cite[Chapter III, Section C]{dunne} for an introduction to Toda systems.

We use the notations of Section \ref{sec:conformal} and we assume that $a\notin\{0,2\}$, $c\neq0$ and $\varepsilon=\sgn\frac c{2-a}$. Around a point where $K-c\neq0$, as explained in Remark \ref{rkxi} we may consider a conformal coordinate on an open neighborhood $U$ so that $|h|^2=\varepsilon\xi$ with $\xi=\frac{2c}{2-a}$. Then \eqref{system2} reads
$$\left(\begin{array}{c} {(u_1)}_{z\bar z} \\ {(u_2)}_{z\bar z} \end{array}\right)=
-\frac c4\left(\begin{array}{cc} 2 & \frac4{2-a} \\ 2-a-\frac bc & 2 \end{array}\right)\left(\begin{array}{c} e^{u_1} \\ e^{u_2} \end{array}\right).$$ (Note that the matrix of this system is invertible if and only if $b\neq0$.)

It is a classical Toda system of rank $2$ when the coefficient matrix is proportional to a rank $2$ Cartan matrix (the definitions in certain references also require $c>0$), i.e., one of the following: 
\begin{itemize}
\item $\mathrm{A}_1\times\mathrm{A}_1=\left(\begin{array}{cc} 2 & 0 \\ 0 & 2 \end{array}\right)$, which is impossible here,
 \item $\mathrm{A}_2=\left(\begin{array}{cc} 2 & -1 \\ -1 & 2 \end{array}\right)$, i.e., $a=6$ and $b=-3c$,
  \item $\mathrm{B}_2=\left(\begin{array}{cc} 2 & -2 \\ -1 & 2 \end{array}\right)$, i.e., $a=4$ and $b=-c$,
   \item $\tra\mathrm{B}_2=\left(\begin{array}{cc} 2 & -1 \\ -2 & 2 \end{array}\right)$, i.e., $a=6$ and $b=-2c$,
    \item $\mathrm{G}_2=\left(\begin{array}{cc} 2 & -1 \\ -3 & 2 \end{array}\right)$, i.e., $a=6$ and $b=-c$,
     \item $\tra\mathrm{G}_2=\left(\begin{array}{cc} 2 & -3 \\ -1 & 2 \end{array}\right)$, i.e., $a=\frac{10}3$ and $b=-\frac c3$.
\end{itemize}
 We refer to \cite{leznov} and \cite[Section 6.2.2]{yang} for a method of resolution in these cases.
 
 In the same way, it is an affine Toda system of rank $1$ when the coefficient matrix is proportional to a rank $1$ affine Cartan matrix (the definitions in certain references also require $c>0$), i.e., one of the following: 
\begin{itemize}
 \item $\mathrm{A}_1^{(1)}=\left(\begin{array}{cc}
         2 & -2 \\
         -2 & 2
        \end{array}\right)$, i.e., $a=4$ and $b=0$,
  \item $\mathrm{A}_2^{(2)}=\left(\begin{array}{cc}
         2 & -1 \\
         -4 & 2
        \end{array}\right)$, i.e., $a=6$ and $b=0$,
   \item $\tra\mathrm{A}_2^{(2)}=\left(\begin{array}{cc}
         2 & -4 \\
         -1 & 2
        \end{array}\right)$, i.e., $a=3$ and $b=0$.
\end{itemize}

For $\mathrm{A}_1^{(1)}$ and $c>0$,  when $u_2=-u_1$, the system yields the elliptic sinh-Gordon equation $\omega_{z\bar z}+\frac12\sinh(2\omega)=0$ with $\omega=\frac12(u_1+\log c)$. This equation was used by Pinkall and Sterling \cite{ps} to classify constant mean curvature tori in $\R^3$.

In the same way, for $\mathrm{A}_2^{(2)}$ and $c>0$, when $u_2=-2u_1-\log(c^3/16)$, the system yields the elliptic Tzitzeica equation $\omega_{z\bar z}=e^{-2\omega}-e^\omega$ with $\omega=u_1+\log(c/2)$. This equation was used by H. Ma and Y. Ma \cite{mama} to classify totally real minimal tori in $\C\mathbb{P}^2(4c)$.

We refer to Section \ref{sec:immersions} for properties of some of these generalized Ricci surfaces related to isometric immersions.

\section{Some geometric properties} \label{sec:geometric}

In this section we give some geometric properties of generalized Ricci surfaces for particular values of $(a,b,c)$, extending some previously known results.

\subsection{Isometric immersions} \label{sec:immersions}

There are several intrinsic characterizations of the metrics of certain minimal, maximal or constant mean curvature surfaces in terms of Ricci-like conditions assuming either that the curvature $K$ does not attain some extremal value $c$ (coming from the Gauss equation) or that the function $\sqrt{|K-c|}$ is of absolute value type
\cite{ricci,lawson,docarmo-dajczer,tg-rmsup,et-rmsup,egt,cdvv,ev}. Based on these results, Theorem \ref{main} will allow us to extend them, removing this assumption on the curvature.

A \emph{pseudoholomorphic} (or \emph{almost complex}) immersion from a Riemann surface into the standard nearly K\"ahler $\s^6(c)$ is an immersion whose differential at any point is complex linear. 

A conformal minimal immersion into $\s^n(c)$ is \emph{superminimal} if all its generalized Hopf differentials vanish; we refer to \cite{ev} and references therein for precise definitions. Superminimal pseudoholomorphic immersions into the standard nearly K\"ahler $\s^6(c)$ are also said to have \emph{null-torsion} \cite{bryant-octonions}.

We now establish this isometric immersion theorem. Items \eqref{ii-min3} and \eqref{ii-max3} for $c=0$ and $H=0$ are due to A. Moroianu and S. Moroianu \cite[Theorem 1.2]{moroianu}.

\begin{thm} \label{immersion}
 Let $(\Sigma,\rmd s^2)$ be a smooth simply connected Riemannian surface. Let $c\in\R$.
 \begin{enumerate}
  \item \label{ii-min3} Let $H\in\R$. There exists a constant mean curvature $H$ isometric immersion $\Sigma\to\M^3(c)$ if and only if $(\Sigma,\rmd s^2)$ is a generalized Ricci surface of type $(4,0,c+H^2)$ with $K\leqslant c+H^2$.
   \item \label{ii-max3} Let $H\in\R$. There exists a constant mean curvature $H$ spacelike isometric immersion $\Sigma\to\M^3_1(c)$ if and only if $(\Sigma,\rmd s^2)$ is a generalized Ricci surface of type $(4,0,c-H^2)$ with $K\geqslant c-H^2$.
 \item \label{ii-supermin4} There exists a superminimal isometric immersion $\Sigma\to\M^4(c)$ if and only if $(\Sigma,\rmd s^2)$ is a generalized Ricci surface of type $(6,-2c,c)$ with $K\leqslant c$.
   \item \label{ii-complex2} There exists a complex isometric immersion $\Sigma\to\C\M^2(4c)$ if and only if $(\Sigma,\rmd s^2)$ is a generalized Ricci surface of type $(6,-12c,4c)$ with $K\leqslant4c$.
  \item \label{ii-lagrangian2} There exists a totally real (equivalently, Lagrangian) minimal isometric immersion $\Sigma\to\C\M^2(4c)$ if and only if $(\Sigma,\rmd s^2)$ is a generalized Ricci surface of type $(6,0,c)$ with $K\leqslant c$.
      \item \label{ii-pseudo6} Assume that $c>0$. There exists a superminimal pseudoholomorphic isometric immersion from $\Sigma$ into the standard nearly K\"ahler $\s^6(c)$ if and only if $(\Sigma,\rmd s^2)$ is a generalized Ricci surface of type $(6,-c,c)$ with $K\leqslant c$.
 \end{enumerate}
\end{thm}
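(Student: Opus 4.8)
The plan is to treat each of the six items by the same two-step strategy: first reduce the ``there exists an isometric immersion'' side to a classical Ricci-type condition valid \emph{away from the umbilical locus} (i.e.\ away from points where $K$ equals the extremal value $c$, $c+H^2$, $c-H^2$, $4c$, etc.), and then use Theorem \ref{main} to promote that condition, stated on an open dense set, to the global statement that $(\Sigma,\rmd s^2)$ is a generalized Ricci surface of the asserted type. For the forward direction (immersion $\Rightarrow$ generalized Ricci), one computes the Gauss and Codazzi equations of the immersion in question; in each listed case the literature already records that, on the open set where the surface is non-umbilical, the curvature satisfies exactly an equation $\Delta\log|K-c_0|=a_0K+b_0$ for the indicated $(a_0,b_0,c_0)$ — for \eqref{ii-min3} and \eqref{ii-max3} this is the classical Ricci/Lawson/do Carmo--Dajczer computation (with the mean curvature $H$ shifting the constant $c$ to $c\pm H^2$ via the Gauss equation $K=c\pm H^2 - \|\text{traceless II}\|^2$); for \eqref{ii-supermin4}, \eqref{ii-complex2}, \eqref{ii-lagrangian2}, \eqref{ii-pseudo6} it is the content of the references cited just before the theorem (e.g.\ \cite{ev,cdvv,bryant-octonions} and the computations of generalized Hopf differentials). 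By Lemma \ref{deltalog} this is precisely equation \eqref{ricci} on the non-umbilical set, and since \eqref{ricci} holds trivially where $K\equiv c_0$, it holds on all of $\Sigma$; so $(\Sigma,\rmd s^2)$ is a generalized Ricci surface of the stated type, and the curvature inequality ($K\leqslant c_0$ or $K\geqslant c_0$) is immediate from the Gauss equation.

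For the converse direction (generalized Ricci $\Rightarrow$ immersion), the point is to build the second fundamental form and invoke the fundamental theorem of surface theory. Here Theorem \ref{main} does the essential work: it guarantees that either $K\equiv c_0$ (in which case the immersion is the totally umbilical one into $\M^3(c)$, or the analogous model immersion, and nothing is to be proved), or the zeroes of $K-c_0$ are isolated and $\sqrt{|K-c_0|}$ is of absolute value type. In the latter case one works on $\Sigma_*=\{K\neq c_0\}$, where the classical results (Ricci's theorem and its analogues in \cite{lawson,docarmo-dajczer,ev,cdvv}) produce an isometric immersion with the prescribed mean curvature / superminimal / complex / Lagrangian / pseudoholomorphic character; since $\Sigma$ is simply connected and $\sqrt{|K-c_0|}$ is of absolute value type, one uses the extension results of \cite{egt,et-rmsup} (already invoked in the introduction via the ``absolute value type'' hypothesis) to extend the immersion, or rather the pair (metric, second fundamental form), smoothly across the isolated umbilical points, so that the Gauss--Codazzi data are defined and smooth on all of $\Sigma$; the fundamental theorem then yields a global immersion. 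Concretely, for \eqref{ii-min3} one writes locally $\rmd s^2=e^{-2f}|\rmd z|^2$ and, by Lemma \ref{lemmaholo}, obtains a holomorphic function $h$ with $e^{-4f}(K-c_0)=-|h|^2$; the Hopf differential of the sought immersion is (a constant multiple of) $h\,\rmd z^2$, which extends holomorphically across the isolated zeroes, and the Codazzi equation for constant mean curvature $H$ reduces to the holomorphicity of this differential while the Gauss equation reduces to the generalized Ricci equation. The higher-codimension items \eqref{ii-supermin4}--\eqref{ii-pseudo6} are handled the same way, with $h\,\rmd z^2$ replaced by the appropriate generalized Hopf differential $h\,\rmd z^{3}$ (weight matching $a_0=6$, i.e.\ $a_0/2=3$), again holomorphic by Lemma \ref{lemmaholo} and the link with Toda systems recorded in Section \ref{toda}.

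The main obstacle I expect is the extension across the umbilical points: one must verify that the locally defined second fundamental forms (equivalently, the generalized Hopf differentials, which are only a priori of absolute value type rather than manifestly holomorphic near an umbilical point) glue to a genuinely smooth tensor on $\Sigma$, and that the resulting Gauss--Codazzi system has a solution — this is exactly where the ``absolute value type'' conclusion of Theorem \ref{main}, combined with the Eschenburg--Guadalupe--Tribuzy machinery \cite{egt}, is indispensable, and it is the step that the classical statements (which assume strict inequality $K<c_0$) do not cover. A secondary technical point is bookkeeping the constants: one has to check case by case that the Gauss equation of the relevant model immersion, after eliminating the norm of the traceless second fundamental form in favour of $K-c_0$, produces exactly the triple $(a_0,b_0,c_0)$ listed — e.g.\ $(6,-2c,c)$ for superminimal surfaces in $\M^4(c)$, $(6,-12c,4c)$ versus $(6,0,c)$ for complex versus Lagrangian minimal surfaces in $\C\M^2(4c)$, and $(6,-c,c)$ for null-torsion curves in the nearly K\"ahler $\s^6(c)$ — and to match the sign of $\varepsilon$ in \eqref{holomorphic0} with the direction of the curvature inequality. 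Neither of these is conceptually hard once Theorem \ref{main} is in hand, but they are where the proof has to be written carefully.
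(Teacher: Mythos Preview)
Your proposal is correct and follows essentially the same approach as the paper: for each item the paper invokes Theorem \ref{main} to upgrade the classical absolute-value-type characterizations of \cite{egt,et-rmsup,ev} (stated under $K\neq c_0$ or under the absolute-value-type hypothesis) to the smooth generalized Ricci condition, and for item \eqref{ii-min3} it also gives the self-contained Gauss--Codazzi argument via Lemma \ref{lemmaholo} and Remark \ref{rkholodiff} that you outline. The only minor difference is emphasis: the paper dispatches items \eqref{ii-supermin4}--\eqref{ii-pseudo6} by one-line citations to the specific theorems in \cite{et-rmsup,egt,ev} (which already handle the absolute-value-type extension across umbilics), whereas you describe redoing that extension by hand; but this is the same mechanism, not a different route.
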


\begin{proof}
We first notice that all surfaces and immersions considered in these statements are smooth.
 \begin{enumerate}
  \item This is a consequence of Theorem \ref{main} and \cite[Theorem 0]{et-rmsup} (totally umbilical surfaces correspond to the case where $K\equiv c$). We also provide a short proof for sake of self-containedness.

In a local conformal coordinate $z$, the Gauss-Codazzi equations for an isometric immersion $\Sigma\to\M^3(c)$ with first and second fundamental form $$\rmd s^2=e^{-2f}|\rmd z|^2,\quad\mathrm{II}=\cQ\rmd z^2+\cH e^{-2f}|\rmd z|^2+\overline\cQ\overline{\rmd z^2}$$ read
$$K=c+\cH^2-4e^{4f}|\cQ|^2,$$
$$\cQ_{\bar z}=\frac12e^{2f}\cH_z$$ where $K=4e^{2f}f_{z\bar z}$ is the curvature. Here, $\cH$ is the mean curvature function.

We may assume that $\Sigma$ is an open subset of $\overline\C$. If $\rmd s^2$ is a generalized Ricci metric of type $(4,0,c+H^2)$ with $K\leqslant c+H^2$,
then, by Lemma \ref{lemmaholo} (with $\varepsilon=-1$) and Remark \ref{rkholodiff}, there exists a holomorphic quadratic differential $h\rmd z^2$ such that $e^{-4f}(K-c-H^2)=-|h|^2$.
So, the Gauss-Codazzi equations are satisfied with $\cH\equiv H$ and $\cQ=h/2$. This proves the existence of the required isometric immersion.

The converse is proved in the same way.
  \item Same as \eqref{ii-min3} using the Gauss-Codazzi equations for a spacelike isometric immersion $\Sigma\to\M^3_1(c)$ (see \cite{palmer}):
  $$K=c-\cH^2+4e^{4f}|\cQ|^2,$$
$$\cQ_{\bar z}=\frac12e^{2f}\cH_z,$$ and Lemma \ref{lemmaholo} with $\varepsilon=1$.
  \item This is a consequence of Theorem \ref{main} and \cite[Theorem 3]{et-rmsup} (totally geodesic surfaces correspond to the case where $K\equiv c$).
  \item This is a consequence of Theorem \ref{main} and \cite[Theorem 3.6]{egt} (this theorem is stated for $c=1$ but, as explained in the introduction and the note added in proof in \cite{egt}, it can be generalized to any K\"ahler $4$-manifold with constant holomorphic sectional curvature, i.e., for any value of $c\in\R$).
  \item Same as \eqref{ii-complex2} using \cite[Theorem 3.8]{egt}. 
    \item By scaling (see Remark \ref{homothety}) we may assume that $c=1$. Then, this is a consequence of Theorem \ref{main} and \cite[Corollary 11.3]{ev}. Note that in our case the map is an immersion because we are assuming that the metric $\rmd s^2$ has no singularity.
 \end{enumerate}
\end{proof}

\begin{rem}
Generalized Ricci surfaces of type $(6,0,0)$ with $K\leqslant0$ appear in items \eqref{ii-supermin4}, \eqref{ii-complex2} and \eqref{ii-lagrangian2} for $c=0$, but then the statements are equivalent (see also the discussion following Theorem 3 in \cite{et-rmsup}). Indeed, if $\Sigma$ is a Riemannian surface and $X:\Sigma\to\R^4$ is a minimal isometric immersion, then the following properties are equivalent:
\begin{itemize}
\item $X$ is superminimal,
 \item there exists a complex structure on $\R^4$ for which $X$ is complex,
 \item there exists a complex structure on $\R^4$ for which $X$ is Lagrangian.
\end{itemize}
We refer to \cite[Section 3]{friedrich} for the equivalence between the first two properties. The equivalence between the last two properties is \cite[Theorem 5]{chenmorvan} (the two complex structures are actually orthogonal; see \cite[Section 2.6.2]{loftinmcintosh}). These properties are also equivalent to $X$ being special Lagrangian for a rotated complex structure; see also \cite[Section 2]{joyce-clay}.
\end{rem}

\begin{rem}
Let $c>0$. Smooth almost complex curves in an equator sphere $\s^5(c)$ of the standard nearly K\"ahler $\s^6(c)$ are locally isometric to totally real minimal surfaces in $\C\mathbb{P}^2(4c)$ (via horizontal lifts for the Hopf fibration $\s^5(c)\to\C\mathbb{P}^2(4c)$ \cite[Sections 6 and 7]{bvw}). Together with item \eqref{ii-lagrangian2}, this provides another characterization of generalized Ricci surfaces of type $(6,0,c)$ with $K\leqslant c$ (which can also be obtained from Theorem \ref{main} and \cite[Theorem 12.2]{ev}).
\end{rem}

 \begin{rem}
 The types of generalized Ricci surfaces that appear in Theorem \ref{immersion} when $c\neq0$ are related with Toda systems; see Section \ref{toda}.
\end{rem}

\begin{rem}
For $c\in\R$, a biconservative surface in $\M^3(c)$ for which the gradient of the mean curvature does not vanish is a generalized Ricci surface of type $(8/3,0,c)$ (see \cite[Theorem 3.1]{cmop} and \cite[Theorem 2.2]{fno}). However, not all generalized Ricci surfaces of type $(8/3,0,c)$ can be locally isometrically immersed as a biconservative surface in $\M^3(c)$ (see \cite[Theorem 4.5]{fno}).
\end{rem}

\begin{rem}
 It would be interesting to investigate whether there exist hyperbolic (for $c<0$) and semi-Riemannian (for $K\geqslant c$) analogues of some of these results.
\end{rem}

\subsection{An application to convex affine spheres}

An \emph{affine sphere} in the affine space $\R^3$ is a smooth surface whose affine normals either all meet in an point or are all parallel. We refer for instance to \cite{loftin-survey,loftinmcintosh} and references therein for surveys on this subject. Note that an affine sphere is not necessarily a topological sphere. The affine second fundamental form of a convex affine sphere is positive definite; it is called the \emph{affine metric} or the \emph{Blaschke metric}.

Opozda \cite{opozda} obtained an intrinsic characterization of the Blaschke metric of affine spheres with non vanishing Pick invariant in terms of a Ricci-like condition. In the convex case, we extend this theorem without assumption on the Pick invariant. For this purpose, we use the integrability equations of Loftin and McIntosh \cite{loftinmcintosh} in terms of a Tzitzeica-type equation, which is actually equivalent to the first equation in \eqref{system} with $a=6$, $b=0$ and $\varepsilon=1$.

\begin{thm} \label{thmaffine}
 Let $(\Sigma,\rmd s^2)$ be a smooth simply connected Riemannian surface. Then there exists an immersion from $\Sigma$ into $\R^3$ as a convex affine sphere with Blaschke metric $\rmd s^2$ if and only if there exists $c\in\R$ such that $(\Sigma,\rmd s^2)$ is a generalized Ricci surface of type $(6,0,c)$ with $K\geqslant c$. Morover this affine sphere is hyperbolic, parabolic or elliptic if $c<0$, $c=0$ or $c>0$ respectively.
\end{thm}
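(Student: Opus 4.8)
The plan is to follow the same pattern used in the proof of Theorem \ref{immersion}: translate the geometric condition (existence of a convex affine sphere immersion with Blaschke metric $\rmd s^2$) into a system of integrability equations in a local conformal coordinate, recognize that system as being governed by \eqref{system} with $a=6$, $b=0$, and then invoke Lemma \ref{lemmaholo} together with Theorem \ref{main} to handle the locus where $K=c$. First I would recall the structure equations of Loftin and McIntosh \cite{loftinmcintosh} for a convex affine sphere: writing the Blaschke metric as $\rmd s^2=e^{-2f}|\rmd z|^2$ and encoding the cubic form (Pick invariant) as a holomorphic cubic differential $U\rmd z^3$, the integrability (Gauss-type) equation reads a Tzitzeica-type equation of the form $f_{z\bar z}=\frac c4 e^{-2f}+\frac14|U|^2 e^{4f}$ for some constant $c\in\R$ related to the affine mean curvature, together with the Codazzi-type equation $U_{\bar z}=0$. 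The sign of $c$ is exactly the sign of the affine mean curvature, hence distinguishes the hyperbolic ($c<0$), parabolic ($c=0$) and elliptic ($c>0$) cases.

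Next I would compare this with \eqref{system} and \eqref{holomorphic0}: setting $a=6$, $b=0$, $\varepsilon=1$, the first line of \eqref{system} is precisely $f_{z\bar z}=\frac c4 e^{-2f}+\frac14|h|^2 e^{4f}$, and \eqref{holomorphic0} becomes $e^{-6f}(K-c)=|h|^2$, i.e. $K-c=e^{6f}|h|^2\geqslant0$. So, on a simply connected $\Sigma$: if $(\Sigma,\rmd s^2)$ is generalized Ricci of type $(6,0,c)$ with $K\geqslant c$ — equivalently, by Theorem \ref{main}, with $K\equiv c$ or $\sgn(K-c)=1$ — then Lemma \ref{lemmaholo}(2) (using Remark \ref{rkholodiff}, since $a=6\in2\N^*$, so $h\rmd z^3$ is a globally defined holomorphic cubic differential) produces a holomorphic $h$ on $\Sigma$ with $e^{-6f}(K-c)=|h|^2$; then $f$ solves the Tzitzeica-type equation with $U=h$ (up to a harmless constant factor), the integrability equations of \cite{loftinmcintosh} hold, and one integrates them to obtain the convex affine sphere immersion with Blaschke metric $\rmd s^2$, hyperbolic/parabolic/elliptic according to the sign of $c$. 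Conversely, given such an affine sphere immersion, its Blaschke metric and holomorphic cubic form satisfy the Tzitzeica-type equation, which by the converse direction of Lemma \ref{lemmaholo}(2) (or simply by a direct computation of $\Delta\log|K-c|$ from $f_{z\bar z}$) shows $(\Sigma,\rmd s^2)$ is generalized Ricci of type $(6,0,c)$, and the Gauss equation $K-c=e^{6f}|U|^2\geqslant0$ gives $K\geqslant c$.

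The main obstacle I anticipate is purely bookkeeping: matching conventions between the Loftin--McIntosh integrability equations (normalizations of the cubic form, the precise constant in front of the $e^{4f}$ term, the relation between $c$ and the affine mean curvature, and the sign conventions for curvature and the Laplacian used here versus in \cite{loftinmcintosh}) so that the Tzitzeica-type equation lines up exactly with the $a=6$, $b=0$, $\varepsilon=1$ case of \eqref{system}. The only genuinely non-formal input is that a solution of the Tzitzeica-type equation on a simply connected surface, together with its holomorphic cubic differential, can be integrated to an honest immersion into $\R^3$ as a convex affine sphere — but this is exactly the content of the structure/integrability theorem of \cite{loftinmcintosh}, which I may assume. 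The fact that the metric $\rmd s^2$ is smooth and nonsingular guarantees the resulting map is an immersion (as in item \eqref{ii-pseudo6} of Theorem \ref{immersion}), and Theorem \ref{main} is what licenses allowing the points where $K=c$, i.e. umbilical points / zeroes of the Pick invariant, which is the whole point of extending Opozda's theorem \cite{opozda}.
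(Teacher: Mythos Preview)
Your proposal is correct and follows essentially the same approach as the paper: both arguments use Lemma \ref{lemmaholo}(2) and Remark \ref{rkholodiff} to produce a global holomorphic cubic differential $h\,\rmd z^3$ with $e^{-6f}(K-c)=|h|^2$, identify the resulting equation with the Tzitzeica-type integrability equation of Loftin--McIntosh, and then invoke their existence theorem (the paper makes the dictionary explicit as $\psi=-f$, $Q=h/\sqrt2$, $\lambda=c/2$). Your anticipated ``bookkeeping'' is exactly what the paper carries out, and no further ideas are needed.
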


\begin{proof}
Let $c\in\R$ and let $(\Sigma,\rmd s^2)$ be a simply connected generalized Ricci surface of type $(6,0,c)$. On an open simply connected set $U\subset\C$ we write $\rmd s^2=e^{-2f}|\rmd z|^2$ where $f$ is a smooth function. Then by Lemma \ref{lemmaholo} we have $e^{-6f}(K-c)=|h|^2$ where $h\rmd z^3$ is a holomorphic cubic differential (see Remark \ref{rkholodiff}). Then \cite[equation (2.2)]{loftinmcintosh} is satisfied with $\psi=-f$, $Q=h/\sqrt2$ and $\lambda=c/2$. Then by \cite[Theorem 2.4]{loftinmcintosh} there exists an immersion from $U$ into $\R^3$ as a convex affine sphere such that $\rmd s^2$ is the induced Blaschke metric, and this affine sphere is hyperbolic, parabolic or elliptic if $c<0$, $c=0$ or $c>0$ respectively (note that we need not do the normalization $\lambda\in\{-1,0,1\}$ made in \cite{loftinmcintosh}). As $\Sigma$ is simply connected, this proves the existence of the required immersion.

The converse follows from the fact that \cite[equation (2.2)]{loftinmcintosh} is also necessary.
\end{proof}

\subsection{A variational property of surfaces of type $(-2,b,0)$}

Let $\Sigma$ be a smooth differentiable surface. On the space $\cM_*(\Sigma)$ of smooth Riemannian metrics on $\Sigma$ with non-vanishing curvature, we consider the functional $$\cE(\rmd s^2)=\int_\Sigma K\log|K|\,\mu$$ where $K$ is the curvature of the metric $\rmd s^2$ and $\mu$ its area form. This functional is related to Ricci flow on surfaces (see \cite{bm-jga,bm-imrn} and references therein).

Item (1) of the following result is due to Bernstein and Mettler \cite{bm-jga}.

\begin{prop} \label{stationary}
 Let $\rmd s^2\in\cM_*(\Sigma)$.
 \begin{enumerate}
  \item The metric $\rmd s^2$ is stationary for $\cE$ with respect to compactly supported conformal deformations of the metric if and only if it is a generalized Ricci metric of type $(-2,0,0)$.
  \item The metric $\rmd s^2$ is stationary for $\cE$ with respect to compactly supported conformal deformations of the metric that preserve the total area if and only if it is a generalized Ricci metric of type $(-2,b,0)$ for some $b\in\R$.
 \end{enumerate}
\end{prop}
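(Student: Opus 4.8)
The plan is to compute the first variation of $\cE$ under a conformal deformation $\rmd s_t^2 = e^{2t\phi}\rmd s^2$ with $\phi$ compactly supported and smooth, and then read off the Euler--Lagrange equation. Writing $\Delta$, $K$, $\mu$ for the quantities of $\rmd s^2 = \rmd s_0^2$, the standard formulas for conformal change (recorded in the terminology subsection, with our sign convention) give $\mu_t = e^{2t\phi}\mu$ and $K_t = e^{-2t\phi}(K - t\,\Delta\phi) + o(t)$, so at $t=0$ one has $\dot\mu = 2\phi\,\mu$ and $\dot K = -2\phi K - \Delta\phi$. Since $\rmd s^2\in\cM_*(\Sigma)$, the curvature stays away from $0$ for small $t$, so $K\log|K|$ is differentiable and $\frac{\rmd}{\rmd t}\big(K\log|K|\big)\big|_{t=0} = \dot K(\log|K| + 1)$. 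Putting these together,
\begin{equation*}
\frac{\rmd}{\rmd t}\cE(\rmd s_t^2)\Big|_{t=0} = \int_\Sigma \Big[(-2\phi K - \Delta\phi)(\log|K|+1) + 2\phi K\log|K|\Big]\mu
= \int_\Sigma \Big[-2\phi K - (\log|K|+1)\Delta\phi\Big]\mu.
\end{equation*}
Now I integrate the term $(\log|K|+1)\Delta\phi$ by parts twice (legitimate since $\phi$ has compact support): $\int_\Sigma (\log|K|+1)\Delta\phi\,\mu = \int_\Sigma \phi\,\Delta\!\big(\log|K|+1\big)\mu = \int_\Sigma \phi\,\Delta\log|K|\,\mu$. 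Hence the first variation equals $-\int_\Sigma \phi\big(2K + \Delta\log|K|\big)\mu$.

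For item (1), the metric is stationary with respect to all compactly supported conformal deformations iff this integral vanishes for every such $\phi$, which by the fundamental lemma of the calculus of variations is equivalent to $\Delta\log|K| = -2K$ on all of $\Sigma$ (recall $K$ never vanishes here). By Lemma \ref{deltalog} this is exactly the condition that $\rmd s^2$ be a generalized Ricci metric of type $(-2,0,0)$, which proves (1). For item (2), the admissible deformations are now those preserving total area, i.e.\ those with $\frac{\rmd}{\rmd t}\int_\Sigma\mu_t|_{t=0} = \int_\Sigma 2\phi\,\mu = 0$; so $\rmd s^2$ is stationary iff $\int_\Sigma\phi\big(2K+\Delta\log|K|\big)\mu = 0$ for all $\phi$ with $\int_\Sigma\phi\,\mu = 0$. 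This holds iff $2K + \Delta\log|K|$ is $L^2(\mu)$-orthogonal to the hyperplane $\{\phi : \int\phi\,\mu = 0\}$, i.e.\ iff $2K + \Delta\log|K|$ is (a.e., hence everywhere by smoothness) equal to a constant $\lambda$. Writing $\lambda = -b$, this says $\Delta\log|K| = -2K - b$, which by Lemma \ref{deltalog} means precisely that $\rmd s^2$ is a generalized Ricci metric of type $(-2,b,0)$; this proves (2).

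The only genuinely delicate point is the passage from "orthogonal to all mean-zero $\phi$" to "constant": one must be slightly careful that test functions need only be smooth and compactly supported, not of mean zero globally on a possibly non-compact $\Sigma$. This is handled by the standard argument: given any two smooth compactly supported $\phi_1,\phi_2$ with $\int\phi_1\,\mu = \int\phi_2\,\mu = 1$ (on a connected surface of infinite area such functions clearly exist on small coordinate balls, rescaled), the difference $\phi_1-\phi_2$ is admissible, forcing $\int(2K+\Delta\log|K|)\phi_1\,\mu = \int(2K+\Delta\log|K|)\phi_2\,\mu$; localizing $\phi_1$ near an arbitrary point shows $2K+\Delta\log|K|$ has the same average against every such bump, hence is the constant $\lambda$. (If $\Sigma$ has finite area one can instead test against $\phi - \frac{1}{\Area(\Sigma)}\int\phi\,\mu$ directly.) Everything else is the routine conformal-variation bookkeeping above.
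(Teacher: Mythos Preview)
Your proof is correct and follows essentially the same route as the paper's: compute the first variation of $\cE$ under a compactly supported conformal deformation, integrate by parts to obtain $-\int_\Sigma\phi\,(2K+\Delta\log|K|)\,\mu$, and read off the Euler--Lagrange equation (the paper quotes the variation formula from \cite{bm-jga,bm-imrn} and phrases the constrained case via a Lagrange multiplier, whereas you derive the formula directly and spell out the orthogonality argument, but the substance is identical). One harmless slip: with your convention $\lambda=-b$ the equation $\Delta\log|K|=-2K-b$ is actually of type $(-2,-b,0)$, not $(-2,b,0)$; since the statement only asserts existence of \emph{some} $b\in\R$, this does not affect the conclusion.
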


\begin{proof} We only prove item (2).
Let $\eta$ be a smooth symmetric $2$-form on $\Sigma$ with compact support. Let $\delta>0$ and $(\rmd s^2_t)_{t\in(-\delta,\delta)}$ be a smooth family of metrics on $\Sigma$ such that
$$\rmd s^2_t=\rmd s^2+t\eta+\rmo(t)$$ as $t\to0$.

Following \cite{bm-jga,bm-imrn} we have
$$\left.\frac\rmd{\rmd t}\right|_{t=0}\cE(\rmd s^2_t)=-\frac14\int_\Sigma\left(H(\Delta\log|K|+2K)-2\langle\eta,\mathring\nabla^2\log|K|\rangle\right)\mu$$ where $\langle\,,\,\rangle$ is the natural bilinear pairing associated to $\rmd s^2$, $H=\tr\eta$ and $\mathring\nabla^2$ is the trace-free part of the Hessian with respect to $\rmd s^2$. If moreover $(\rmd s^2_t)_{t\in(-\delta,\delta)}$ is a conformal deformation, then $\langle\eta,\mathring\nabla^2\log|K|\rangle=0$ and so
$$\left.\frac\rmd{\rmd t}\right|_{t=0}\cE(\rmd s^2_t)=-\frac14\int_\Sigma H(\Delta\log|K|+2K)\mu.$$

Also, if $\mu_t$ denotes the area form for the metric $\rmd s^2_t$, then
$$\left.\frac\rmd{\rmd t}\right|_{t=0}\mathrm{Area}_{\rmd s^2_t}(\supp\eta)=\left.\frac\rmd{\rmd t}\right|_{t=0}\int_{\supp\eta}\mu_t=\frac12\int_\Sigma H\mu.$$

Hence, $\rmd s^2$ is stationnary for $\cE$ with respect to compactly supported conformal deformations that preserve the total area if and only if there exists a Lagrange multiplier $b\in\R$ such that
$$\int_\Sigma H(\Delta\log|K|+2K)\mu=b\int_\Sigma H\mu$$ for all compactly supported functions $H$, i.e.,
$$\Delta\log|K|=-2K+b.$$
\end{proof}

\begin{rem}
 It is important to emphasize in this result the hypothesis that the curvature does not vanish. Actually, the functional $\cE$ can also be defined on the space $\cM_+(\Sigma)$ (respectively, $\cM_-(\Sigma)$) of smooth metrics on $\Sigma$ with non-negative (respectively, non-positive) curvature. However, item (1) fails on $\cM_+(\Sigma)$. Indeed, assume that $\Sigma$ is compact and orientable and that $\rmd s^2\in\cM_+(\Sigma)$. Considering $\rmd s^2_t=e^{2t}\rmd s^2$ for $t\in\R$, one has $\cE(\rmd s^2_t)=\cE(\rmd s^2)-4\pi t\chi(\Sigma)$. Hence, if $\rmd s^2$ is stationary with respect to conformal deformations of the metric, then $\Sigma$ is a torus. However, there exists generalized Ricci spheres of type $(-2,0,0)$ with non-negative curvature (see Proposition \ref{sphere2}).
\end{rem}

\begin{rem}
A metric in $\cM_*(\Sigma)$ is a gradient Ricci soliton if and only it is stationary for $\cE$ with respect to compactly supported infinitesimally area-preserving deformations of the metric \cite[Theorem 4.1]{bm-imrn}. This is also equivalent to being a 
  generalized Ricci metric of type $(-2,b,0)$ for some $b\in\R$ such that moreover $\mathring\nabla^2\log|K|=0$ \cite[Proposition 2.1]{bm-imrn}. The soliton is shrinking (respectively steady, expanding) if $b>0$ (respectively $b=0$, $b<0$).
\end{rem}

\section{Compact orientable generalized Ricci surfaces} \label{sec:compact}

\subsection{Generalities}

Using an integral formula for functions of absolute value type due to Eschenburg, Guadalupe and Tribuzy \cite{egt}, we get the following lemma for compact orientable generalized Ricci surfaces, which will be useful to obtain some topological obstructions.

\begin{lemma}
Let $(a,b,c)\in\R^3$. Let $(\Sigma,\rmd s^2)$ be a compact orientable generalized Ricci surface of type $(a,b,c)$. If $K$ is not identically $c$, then 
\begin{equation} \label{avtintegral}
  \pi a\chi(\Sigma)+\frac b2\mathrm{Area}(\Sigma)=-2\pi N
 \end{equation}
  where $N$ is the sum of the orders of the zeroes of $\sqrt{|K-c|}$. 
\end{lemma}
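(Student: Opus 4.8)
The plan is to apply the integral formula of Eschenburg, Guadalupe and Tribuzy for functions of absolute value type to the function $F=\sqrt{|K-c|}$, which is of absolute value type by Theorem \ref{main}. That formula expresses the integral of the Laplacian of $\log F$ over $\Sigma$ in terms of the orders of the zeroes of $F$: concretely, since $F$ has isolated zeroes (again by Theorem \ref{main}), one has, in the distributional sense or after excising small disks around the zeroes and applying the divergence theorem,
\begin{equation*}
\int_\Sigma \Delta\log F\,\mu = -2\pi N,
\end{equation*}
where $N$ is the sum of the orders. The point is that near an isolated zero of order $m$ one has $\log F = m\log|z| + (\text{harmonic or smooth})$ in a conformal coordinate $z$, and $\Delta\log|z|$ contributes $-2\pi m$ (with our sign convention for $\Delta$, following the divergence-theorem computation of \cite{egt}); away from the zeroes $\log F$ is smooth so the integral is a genuine Lebesgue integral with the stated boundary contributions vanishing in the limit.

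Next I would rewrite the left-hand side. On the open dense set where $K\neq c$ we have $\log F = \tfrac12\log|K-c|$, so $\Delta\log F = \tfrac12\Delta\log|K-c| = \tfrac12(aK+b)$ by Lemma \ref{deltalog}. Therefore
\begin{equation*}
\int_\Sigma\Delta\log F\,\mu = \frac12\int_\Sigma(aK+b)\,\mu = \frac a2\int_\Sigma K\,\mu + \frac b2\,\mathrm{Area}(\Sigma).
\end{equation*}
By the Gauss--Bonnet theorem $\int_\Sigma K\,\mu = 2\pi\chi(\Sigma)$, so the left-hand side equals $\pi a\chi(\Sigma) + \tfrac b2\mathrm{Area}(\Sigma)$. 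Combining with the EGT formula gives exactly \eqref{avtintegral}.

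\textbf{Main obstacle.} The delicate point is the justification of the identity $\int_\Sigma\Delta\log F\,\mu = -2\pi N$ for a function of absolute value type with isolated zeroes, and in particular matching the sign convention: one must check that $\Delta\log F$ is integrable near a zero (writing $F=|rh|$ locally with $h$ holomorphic of order $m$ and $r$ smooth non-vanishing, so $\log F = \log|r| + m\log|z| + \log|\tilde h|$ with $\tilde h$ holomorphic non-vanishing, the singular part $m\log|z|$ is indeed locally integrable) and that excising $\varepsilon$-disks and letting $\varepsilon\to0$ produces the boundary term $-2\pi m$ at each zero with the correct sign for our Laplacian. This is precisely the content of the integral formula in \cite{egt}, so I would cite it directly rather than reprove it; the only care needed is to track that the paper's sign convention for $\Delta$ (opposite to that of \cite{moroianu}) is consistent with how the formula is quoted, which is the same convention already used implicitly in Lemma \ref{deltalog} and the proof of Theorem \ref{main}.
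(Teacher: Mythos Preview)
Your proposal is correct and follows exactly the paper's approach: apply Theorem \ref{main} to get that $\sqrt{|K-c|}$ is of absolute value type, invoke the integral formula \cite[Lemma 4.1]{egt} to obtain $\int_\Sigma\Delta\log\sqrt{|K-c|}\,\mu=-2\pi N$, then use Lemma \ref{deltalog} and Gauss--Bonnet. Your additional discussion of the sign convention and local integrability is sound but already absorbed into the cited EGT lemma.
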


\begin{proof}
By Theorem \ref{main}, $\sqrt{|K-c|}$ is of absolute value type. Hence, since this function does not vanish identically, \cite[Lemma 4.1]{egt} gives $$\int_\Sigma\Delta\log\sqrt{|K-c|}\;\mu=-2\pi N.$$ Next, we conclude by Lemma \ref{deltalog} and the Gauss-Bonnet formula.
\end{proof}

We also have another useful integral formula.

\begin{lemma}
 Let $(a,b,c)\in\R^3$. Let $(\Sigma,\rmd s^2)$ be a compact orientable generalized Ricci surface of type $(a,b,c)$. Then 
  \begin{equation} \label{ricciintegral}
  2\int_\Sigma||\nabla K||^2\mu+\int_\Sigma(aK+b)(K-c)^2\mu=0,
 \end{equation}
\end{lemma}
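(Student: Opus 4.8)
The plan is to integrate the defining relation \eqref{ricci} directly over $\Sigma$. Recall that \eqref{ricci} reads
$$(c-K)\Delta K+||\nabla K||^2+(aK+b)(K-c)^2=0$$
on all of $\Sigma$ (this holds pointwise everywhere, including where $K=c$, as noted in the proof of Lemma \ref{deltalog}). So it suffices to show $\int_\Sigma (c-K)\Delta K\,\mu=\int_\Sigma||\nabla K||^2\mu$, which together with integrating the remaining two terms gives \eqref{ricciintegral}.

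First I would observe that $K$ is smooth on the compact surface $\Sigma$, so all the integrands below are smooth and integrable, and the divergence theorem applies with no boundary terms. Then I would write $\int_\Sigma(c-K)\Delta K\,\mu=c\int_\Sigma\Delta K\,\mu-\int_\Sigma K\Delta K\,\mu$. The first term vanishes since $\int_\Sigma\Delta K\,\mu=0$ on a closed surface (integral of a divergence). For the second term, integration by parts gives $\int_\Sigma K\Delta K\,\mu=-\int_\Sigma||\nabla K||^2\mu$ (using $\Delta=\dv\nabla$ and $\dv(K\nabla K)=||\nabla K||^2+K\Delta K$). Hence $\int_\Sigma(c-K)\Delta K\,\mu=\int_\Sigma||\nabla K||^2\mu$. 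Substituting into the integral of \eqref{ricci} yields
$$\int_\Sigma||\nabla K||^2\mu+\int_\Sigma||\nabla K||^2\mu+\int_\Sigma(aK+b)(K-c)^2\mu=0,$$
which is exactly \eqref{ricciintegral}.

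There is no real obstacle here: the only point requiring care is the legitimacy of the integration by parts, which is guaranteed by smoothness of $K$ and compactness (and orientability, so that $\mu$ is a genuine volume form). I would state the proof in three short lines: integrate \eqref{ricci}, note $\int_\Sigma\Delta K\,\mu=0$, and apply $\int_\Sigma K\Delta K\,\mu=-\int_\Sigma||\nabla K||^2\mu$.

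\begin{proof}
 Recall that \eqref{ricci} holds at every point of $\Sigma$. Since $\Sigma$ is compact, orientable and $K$ is smooth, we may integrate \eqref{ricci} over $\Sigma$:
 $$\int_\Sigma(c-K)\Delta K\,\mu+\int_\Sigma||\nabla K||^2\mu+\int_\Sigma(aK+b)(K-c)^2\mu=0.$$
 On a closed surface $\int_\Sigma\Delta K\,\mu=0$, and integration by parts gives $\int_\Sigma K\Delta K\,\mu=-\int_\Sigma||\nabla K||^2\mu$. Hence
 $$\int_\Sigma(c-K)\Delta K\,\mu=c\int_\Sigma\Delta K\,\mu-\int_\Sigma K\Delta K\,\mu=\int_\Sigma||\nabla K||^2\mu,$$
 and substituting this into the previous identity yields \eqref{ricciintegral}.
\end{proof}
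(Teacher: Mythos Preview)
Your proof is correct and follows the same approach as the paper: integrate \eqref{ricci} over $\Sigma$ and use Stokes' formula. The paper states this in a single sentence, while you spell out the two applications of Stokes (namely $\int_\Sigma\Delta K\,\mu=0$ and $\int_\Sigma K\Delta K\,\mu=-\int_\Sigma||\nabla K||^2\mu$), but the argument is identical.
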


\begin{proof}
 We integrate \eqref{ricci} and use Stokes' formula to get the result.
\end{proof}

\subsection{Spheres}

\begin{prop} \label{sphere}
Let $(a,b,c)\in\R^3$ and let $(\Sigma,\rmd s^2)$ be a generalized Ricci sphere of type $(a,b,c)$.
\begin{enumerate}
\item If $b>0$ and $K$ is not identically $c$, then $a<0$.
 \item If $b=0$ and $K$ is not identically $c$, then $a=-N\in-\N$, where $N$ is the sum of the orders of the zeroes of $\sqrt{|K-c|}$.
 \item If $a=b=0$, then $K$ is constant.
 \item If $b=c=0$, then $K\geqslant0$ and $a\in-2\N$; if moreover $a<0$, then every zero of $\sqrt K$ is of order at most $-a/2$ (in particular, $K$ has to vanish at at least two points).
\end{enumerate}
\end{prop}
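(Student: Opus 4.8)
The plan is to treat the four items in increasing difficulty, using the integral formulas \eqref{avtintegral} and \eqref{ricciintegral} together with the structural results from Section \ref{sec:generalities}. For a sphere $\Sigma$ we have $\chi(\Sigma)=2$, and whenever $K\not\equiv c$ Theorem \ref{main} tells us that $\sqrt{|K-c|}$ is of absolute value type with isolated zeroes, so $N\in\N$ is well defined and finite. Item (1): substitute $\chi(\Sigma)=2$ into \eqref{avtintegral} to get $2\pi a+\tfrac b2\operatorname{Area}(\Sigma)=-2\pi N\leqslant0$; since $b>0$ and $\operatorname{Area}(\Sigma)>0$, this forces $a<0$. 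Item (2): set $b=0$ in \eqref{avtintegral}, giving $2\pi a=-2\pi N$, hence $a=-N\in-\N$.

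For item (3), with $a=b=0$ the defining equation \eqref{ricci} reads $(c-K)\Delta K+\|\nabla K\|^2=0$, i.e.\ $\Delta\log|K-c|=0$ away from zeroes of $K-c$. If $K\equiv c$ we are done; otherwise, by Theorem \ref{main} the zeroes of $K-c$ are isolated, and \eqref{avtintegral} with $a=b=0$ gives $N=0$, so $K-c$ never vanishes. Then $\log|K-c|$ is a (globally defined, since $K-c$ has constant sign) harmonic function on the compact surface $\Sigma$, hence constant, so $|K-c|$ is constant; combined with the constant sign this makes $K$ constant. I would note that this also follows directly from Remark \ref{kappa} once one knows $K-c$ has no zero, but the harmonic-function argument is cleaner to state.

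Item (4) is the main obstacle and requires the extra ingredient mentioned in the introduction: the ``developing map''. Here $b=c=0$, so the equation is $\Delta\log|K|=aK$ away from zeroes of $K$, and by item (2) we already have $a=-N\in-\N$. First I would show $K\geqslant0$: integrate \eqref{ricciintegral} with $b=c=0$ to get $2\int_\Sigma\|\nabla K\|^2\mu+a\int_\Sigma K^3\mu=0$; if $K\not\equiv0$ then $a\leqslant0$ (which we knew), and more importantly the sign of $K$ is constant by Theorem \ref{main}, so if $K$ were somewhere negative it would be $\leqslant0$ everywhere, making the left side a sum of two non-negative terms that forces $\nabla K\equiv0$ and $K$ constant, contradicting the sphere Gauss--Bonnet constraint $\int_\Sigma K\mu=4\pi>0$; hence $K\geqslant0$. (If $K\equiv0$ then again Gauss--Bonnet is violated, so in fact $K$ is positive somewhere and $N\geqslant1$.) Now I would invoke the conformal description: by Lemma \ref{lemmaholo}(2) and Remark \ref{rkholodiff}, on a simply connected chart $e^{-af}K=|h|^2$ with $h\,\rmd z^{-N/2}$ behaving like a meromorphic $(-N/2)$-differential; equivalently, writing $\rmd s^2=e^{-2f}|\rmd z|^2$, the metric $K^{-1/N}\rmd s^2=|h|^{-2/N}|\rmd z|^2$ is flat, i.e.\ $\sqrt[N]{K}\,\rmd s^2$ (which is $|K|^{2/a}\rmd s^2$, cf.\ Corollary \ref{constantcurvature}(1)) is a flat metric on $\Sigma$ away from the zeroes of $K$. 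Since $\Sigma$ is a sphere, one develops this flat metric: the developing map is a holomorphic (in suitable coordinates) map $\overline\C\to\overline\C$, hence a rational function, and the zeroes of $\sqrt K$ of order $m$ correspond to conical points of the flat metric of angle $2\pi(1+ 2m/N)$... I would instead argue via the holomorphic structure directly. Writing things on $\overline\C$ with coordinate $z$, $h\,\rmd z^{-N/2}$ is a global meromorphic section of $K_{\overline\C}^{-N/2}$; since $\deg K_{\overline\C}=-2$ this line bundle has degree $N$, so $h$ has $N$ zeroes counted with multiplicity (there being no poles, as $K$ extends smoothly and the only issue at $z=\infty$ is absorbed by the transition factor $(\psi')^{a/2}$). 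Matching parities: the order of $\sqrt K=|K|^{1/2}$ at a zero $p$ equals $\operatorname{ord}_p h$, and these orders sum to $N=-a$; but I must show each such order $m$ is $\leqslant -a/2=N/2$ and $a$ is even.

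To finish item (4): the key point is that the flat metric $K^{1/N}\rmd s^2$ on the sphere has, at a zero of $\sqrt K$ of order $m$, a conical singularity, and the cone angle must be computed from the local model $|h|^{2/N}|\rmd z|^2 \sim |z|^{2m/N}|\rmd z|^2$, giving angle $2\pi(1+m/N)$ with order $m/N$ in the sense of Section~2.2. Since the flat metric extends smoothly across $z=\infty$ only if the total ``defect'' is right, the reciprocal Gauss--Bonnet / developing-map argument (as in the references \cite{ww} and Section~2.2, and exactly as carried out by A.~Moroianu and S.~Moroianu for $a=4$) shows that the developing map of the flat metric is rational and its local degree at each cone point is $m/N+1\in\N^*$ wait — rather, the developing map near such a point is $z\mapsto z^{1+m/N}$ up to a Möbius transformation, so $1+m/N\in\N$, i.e.\ $N\mid m$; but $m\leqslant N$ forces $m\in\{0,N\}$... that can't be right for general order. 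I expect the correct bookkeeping is the following: since $a=-N$ and $\sqrt K$ has absolute value type with orders summing to $N$, and $|K|^{2/a}\rmd s^2=K^{-2/N}\rmd s^2$ is flat with cone angle $2\pi(1-2m/N)$ at a zero of order... — I would carefully recompute the exponent so that positivity of the cone angle ($>0$) yields $m<N/2$ hmm, or $\leqslant$, and integrality from the developing map (rational self-map of $\overline\C$, local degree a positive integer) forces $2m/N\in\N$, whence $a=-N$ is even and $m\leqslant N/2=-a/2$; summing, since each order is $\leqslant N/2$ and they total $N\geqslant1$, there must be at least two zeroes. The main obstacle is thus pinning down this cone-angle computation and the precise statement that the developing map of the associated flat spherical metric is a rational function with integer local degrees — I would cite Section~2.2 and \cite{ww}, and model the argument on the $a=4$ case in \cite{moroianu}, adapting the exponent $2/a$ in place of $1/2$.
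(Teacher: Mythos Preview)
Your arguments for items (1) and (2) are correct and match the paper. For item (3) the paper simply reads off \eqref{ricciintegral} with $a=b=0$ to get $\int_\Sigma\|\nabla K\|^2\mu=0$, hence $K$ constant; your route via \eqref{avtintegral} (forcing $N=0$) and then harmonicity of $\log|K-c|$ on the compact sphere is a valid alternative, just a bit longer.

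Item (4) has a genuine gap. You correctly obtain $K\geqslant0$ and $a=-N\in-\N$, but from there you try to work with the \emph{flat} metric $|K|^{2/a}\rmd s^2$ and get tangled in the cone-angle bookkeeping; the developing map of a flat metric does not take values in $\overline\C$, so there is no rational function to extract integrality from, and your attempt to use $h\,\rmd z^{-N/2}$ as a section of $K_{\overline\C}^{-N/2}$ presupposes $N$ even, which is what you want to prove. The paper instead uses the \emph{spherical} metric from Corollary \ref{constantcurvature}(2): on $\{K>0\}$ the metric $K\,\rmd s^2$ has constant curvature $1-\tfrac a2=1+\tfrac N2>0$, and at a zero of $\sqrt K$ of order $m_j$ it has a conical singularity of \emph{integer} order $m_j$ (angle $2\pi(m_j+1)$), since locally $K\,\rmd s^2\sim|z|^{2m_j}|\rmd z|^2$. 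Because the cone orders are integers, the developing map of this spherical metric is a genuine rational function $G:\overline\C\to\overline\C$ whose critical points are exactly the $p_j$ with multiplicities $m_j$ (this is the standard fact cited from \cite{eremenko-gt}). Riemann--Hurwitz then gives
\[
2+\sum_{j=1}^n m_j=2\deg G,
\]
so $N=\sum m_j=2(\deg G-1)$ is even, and each $m_j\leqslant\deg G-1=N/2=-a/2$; since the $m_j$ are positive and sum to $N$, there are at least two of them. That is the missing idea.

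One small correction: your parenthetical ``$N\geqslant1$'' is not needed and not asserted; $a=0$ (i.e.\ $N=0$) is allowed in the conclusion $a\in-2\N$, and then item (3) simply forces $K$ to be a positive constant.
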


\begin{proof}
 \begin{enumerate}
 \item  This is a consequence of \eqref{avtintegral} with $\chi(\Sigma)=2$.
  \item This is a consequence of \eqref{avtintegral} with $\chi(\Sigma)=2$ and $b=0$.
  \item This is a consequence of \eqref{ricciintegral} with $a=b=0$.
  \item By Theorem \ref{main}, $K$ does not change sign, hence $K\geqslant0$ by the Gauss-Bonnet formula, and $K$ is not identically $0$. Then, by (2), $a=-N\in-\N$.
  
  From now on we assume that $N\in\N^*$. The function $\sqrt K$ has isolated zeroes with orders $m_1,\dots,m_n$ with $m_j\in\N^*$ for each $j$ and $\sum_{j=1}^nm_j=N$. Since the metric $K\rmd s^2$ has constant curvature $1-\frac a2>0$ away from the zeroes of $K$, it is homothetic to a spherical metric with conical singularities of angles $2\pi(m_j+1)$, $j\in\{1,\dots,n\}$. Since $m_1,\dots,m_n$ are integers, the developing map of this metric is a rational function $G:\overline\C\to\overline\C$ with critical points of multiplicities $m_1,\dots,m_n$; then the Riemann-Hurwitz formula $$2+\sum_{j=1}^nm_j=2\deg G$$ implies that $N$ is even; also, $m_j\leqslant\deg G-1=N/2$ for each $j$ (and so, in particular, $n\geqslant2$); we refer to \cite[Section 3]{eremenko-gt} for details.
 \end{enumerate}
\end{proof}

\begin{ex} \label{ex:veronese}
Let $c>0$. We recall Section \ref{sec:immersions}.
 \begin{itemize}
 \item A smooth algebraic curve of degree $2$ in $\C\mathbb{P}^2(4c)$ is a compact generalized Ricci sphere of type $(6,-12c,4c)$ with $K\leqslant 4c$. It does not have constant curvature unless it is the image by an isometry of $\C\mathbb{P}^2(4c)$ of the Veronese surface $\{[Z_0,Z_1,Z_2]\in\C\mathbb{P}^2\mid Z_0^2+Z_1^2+Z_2^2=0\}$ \cite[Corollary 3.7]{egt} (see also \cite{nomizu-smyth}).
  \item There exist infinitely many non isometric minimal immersed spheres in $\s^4(c)$ of area $12\pi/c$: see for instance \cite[Propositions 6.15 and 6.21]{barbosa} with $m=k=2$. As noticed in \cite[Introduction]{bryant-sphere}, minimal spheres in $\s^4(c)$ are superminimal. Hence these are generalized Ricci spheres of type $(6,-2c,c)$ with $K\leqslant c$, $K\not\equiv c$, and $N=0$ by \eqref{avtintegral}, so $K<c$.
  \item By Corollary \ref{conformalricci2bis} and Remarks \ref{rkdual} and \ref{homothety}, generalized Ricci spheres of type $(6,-2c,c)$ with $K<c$ are in bijection with generalized Ricci spheres of type $(4,-c,c)$ with $K<c$. Hence there exist infinitely many non isometric such spheres. By \eqref{avtintegral} with $N=0$, their area is $16\pi/c$.
  \item There exist infinitely many non isometric almost complex spheres, hence superminimal, in the standard nearly K\"ahler $\s^6(c)$ of area $24\pi/c$: see \cite[Theorem 5.1]{fernandez} or \cite[Theorem 6]{martins} with $d=6$ (these are immersions because the fact that the area is $24\pi/c$ implies that the total ramification degrees of the indicatrix curve vanish). Hence these are generalized Ricci spheres of type $(6,-c,c)$ with $K\leqslant c$, $K\not\equiv c$, and $N=0$ by \eqref{avtintegral}, so $K<c$.
  \item Reasoning as above, we deduce that there exist infinitely many non isometric generalized Ricci spheres of type $\left(\frac{10}3,-\frac c3,c\right)$ with $K<c$; their area is $40\pi/c$.
 \end{itemize}
\end{ex}

We now construct generalized Ricci spheres of type $(a,0,0)$ satisfying the necessary conditions given in item (4) of Proposition \ref{sphere}. The method of the following construction is inspired by \cite[Proposition 6.2]{moroianu}.

\begin{thm} \label{sphere0}
    Let $\ell\in\N^*$. Let $(m_1,\dots,m_n)$ be a partition of $2\ell$ such that $$m_j\leqslant\ell$$ for every $j\in\{1,\dots,n\}$. Let $p_1,\dots,p_n$ be distinct points of $\overline\C$. Then there exists a conformal generalized Ricci metric of type $(-2\ell,0,0)$ on $\overline\C$ such that $K\geqslant0$, the zeroes of the function $\sqrt K$ are $p_1,\dots,p_n$ and their respective orders are $m_1,\dots,m_n$. 
\end{thm}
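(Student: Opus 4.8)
The plan is to run the construction of A. Moroianu and S. Moroianu for Ricci spheres (their Proposition 6.2) in the more general setting $a=-2\ell$, using Proposition \ref{propVconical2} as the local model near each prescribed zero. First I would fix on $\overline\C$ the constant curvature metric with conical singularities that is naturally dictated by the $(a,0,0)$ structure: since the curvature $K$ should be nonnegative and the auxiliary metric $|K|\,\rmd s^2$ should have constant curvature $1-\tfrac a2=1+\ell>0$ (by Corollary \ref{constantcurvature}, with $b=c=0$, $\gamma=1$, $\sgn K=1$), and since $\sqrt K$ must have a zero of order $m_j$ at $p_j$, the metric $|K|\,\rmd s^2$ must be a spherical metric on $\overline\C$ with a conical singularity of angle $2\pi(m_j+1)$ at $p_j$ for each $j$. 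So the first step is to produce such a spherical metric: its existence is exactly the content of the constant-curvature-with-prescribed-conical-singularities literature cited in the excerpt (\cite{mondello2019spherical,eremenko-gt,bdmm}), and the relevant necessary condition, the Riemann--Hurwitz balance $2+\sum m_j=2\deg G$ with $m_j\leqslant\deg G-1$, is precisely the hypothesis $(m_1,\dots,m_n)$ is a partition of $2\ell$ with $m_j\leqslant\ell$ (take $\deg G=\ell+1$). In fact for integer cone angles one has an explicit model: the developing map is a rational function $G:\overline\C\to\overline\C$ of degree $\ell+1$ with critical points exactly $p_1,\dots,p_n$ of multiplicities $m_1,\dots,m_n$, and $|K|\,\rmd s^2$ is the pullback $G^*\rmd s^2_{\s^2}$ of the round metric.

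\textbf{Main steps.} With the spherical metric $\rmd\sigma^2=G^*\rmd s^2_{\s^2}$ of curvature $1+\ell$ in hand, I would next pass to the flat metric of Proposition \ref{propVconical2}: near each $p_j$, in a conformal coordinate $z$ vanishing at $p_j$ one has $\rmd\sigma^2=e^{2v_j}|z|^{2m_j}|\rmd z|^2$ with $v_j$ smooth (integer cone angle, so smoothness at the apex holds as recalled in the subsection on conical singularities), and one chooses a flat metric $\rmd\sigma_0^2=e^{2u_j}|z|^{4m_j/a}|\rmd z|^2$ with $u_j$ smooth — here $4m_j/a=-2m_j/\ell$, which lies in $(-2,0)$ because $1\leqslant m_j\leqslant\ell$, hence is an admissible cone order. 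Globally, $\rmd\sigma_0^2$ should be a flat metric on $\overline\C$ with conical singularities of orders $-2m_j/\ell$ at the $p_j$; such a metric exists iff $\sum_j(-2m_j/\ell)=-4$ (Gauss--Bonnet for flat cone metrics on the sphere), which holds since $\sum m_j=2\ell$. I would then set $V$ by $\rmd\sigma^2=V\rmd\sigma_0^2$ on $\overline\C$ minus the $p_j$, and define the candidate generalized Ricci metric $\rmd s^2=V^{2/(2-a)}\rmd\sigma_0^2=V^{1/(\ell+1)}\rmd\sigma_0^2$. Proposition \ref{propVconical2} (applied with each local $m_j$, and with $1-\tfrac a2=1+\ell$ playing the role of its curvature $1-a/2$) then says exactly that $\rmd s^2$ extends smoothly across each $p_j$, is a generalized Ricci metric of type $(-2\ell,0,0)$ away from the $p_j$ hence on all of $\overline\C$ by continuity of the defining relation \eqref{ricci}, has $K>0$ off the $p_j$, and has $\sqrt K$ vanishing to order exactly $m_j$ at $p_j$. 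That is precisely the assertion.

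\textbf{The main obstacle.} The genuinely delicate point is the \emph{global} existence of the two cone metrics — the spherical metric of curvature $1+\ell$ and the flat metric — with the prescribed cone data on $\overline\C$; for the spherical one this is a nontrivial PDE existence problem (Troyanov's and subsequent sharp results are needed when the angles are large), although the integrality of the cone angles rescues us via the explicit rational-function model, which is why I would emphasize that route: take $G$ a rational map of degree $\ell+1$ whose critical values form a suitable configuration — the existence of $G$ with \emph{prescribed critical points} $p_1,\dots,p_n$ and multiplicities $m_j$ is itself a statement one should cite carefully (it follows for instance from the discussion in \cite[Section 3]{eremenko-gt}), and one must check that the resulting cone metric $G^*\rmd s^2_{\s^2}$ indeed has \emph{no other} conical singularities, i.e.\ that $G$ has no critical points outside $\{p_1,\dots,p_n\}$ — which is forced by the Riemann--Hurwitz count $2(\ell+1)-2=\sum m_j=2\ell$ since each prescribed $p_j$ already contributes its full multiplicity. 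Everything else is bookkeeping: matching the local exponents with $a=-2\ell$, verifying the cone orders fall in the admissible range $(-1,\infty)$, and invoking Proposition \ref{propVconical2} locally near each $p_j$ while using Proposition \ref{propV} on the complement.
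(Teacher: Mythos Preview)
Your proposal is correct and follows the paper's proof essentially verbatim: obtain the spherical metric $\rmd\sigma^2$ of curvature $\ell+1$ via a degree-$(\ell+1)$ rational map with prescribed critical points (Scherbak, cited through \cite[Section 3]{eremenko-gt}), build the companion flat metric $\rmd\sigma_0^2$ from \cite[Lemma 6.1]{moroianu}, set $\rmd s^2=V^{2/(2-a)}\rmd\sigma_0^2$, and invoke Propositions \ref{propV} and \ref{propVconical2}. One minor bookkeeping slip: the cone order of $\rmd\sigma_0^2$ at $p_j$ is $2m_j/a=-m_j/\ell$ (half your stated value), and when $m_j=\ell$ this equals $-1$, so $\rmd\sigma_0^2$ is not a cone metric in the strict sense --- but this is harmless, since neither \cite[Lemma 6.1]{moroianu} nor Proposition \ref{propVconical2} requires the exponent $4m_j/a$ to exceed $-2$, only the local form $e^{2u_j}|z|^{4m_j/a}|\rmd z|^2$ with $u_j$ smooth.
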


\begin{proof} 
  Since  $$2+\sum_{j=1}^{n}m_j=2(\ell+1)$$ and $m_j\leqslant\ell$ for every $j\in\{1,\dots,n\}$, by \cite[Section 3]{eremenko-gt} there exists a metric $\rmd\sigma^2$ of constant curvature $\ell+1>0$ on $\overline\C\setminus\{p_1, p_2,\dots,p_n\}$ with conical singularities at the $p_j$ of respective orders $m_j$ (i.e., angle $2\pi(m_j+1)$), $j\in\{1,\dots,n\}$: this follows from the existence of a rational function $\overline\C\to\overline\C$ of degree $\ell+1$ whose critical points are $p_j$ with multiplicities $m_j$, $j\in\{1,\dots,n\}$, proved by Scherbak \cite{scherbak}.
   
    Around each $p_j$ there is a conformal coordinate $z$ defined on an open set $U_j$ such that $z=0$ at $p_j$ and
    $$\rmd\sigma^2=e^{2v_j}|z|^{2m_j}|\rmd z|^2$$ on $U_j$ with $v_j\in\cC^\infty(U_j,\R)$.
    It is important to notice that, since $m_j\in\N^*$, the function $v_j$ is smooth at the origin.
    
     We set $a=-2\ell$. 
     Let us define a function $\beta:\{p_1, p_2,\dots, p_n\}\to\mathbb{R}$ by
    $$\beta(p_j) = \frac{2m_j +a}{a}.$$
   Hence 
    $$\sum_{j=1}^{n}(\beta(p_j)-1) = \frac{2}{a}\sum_{j=1}^{n}m_j = -2 = -\chi(\overline\C).$$
    So, from \cite[Lemma 6.1]{moroianu}, we obtain a flat metric $\rmd\sigma^2_0$ defined on $\Sigma\setminus\{p_1, p_2,\dots,p_n\}$ that is conformal to $\rmd\sigma^2$ and such that, in each $U_j$, this metric can be  written as 
   $$\rmd\sigma^2_0=e^{2u_j}|z|^{2\beta(p_j)-2}|\rmd z|^2=e^{2u_j}|z|^{\frac{4}{a}m_j}|\rmd z|^2$$ with $z$ as above
    for some $u_j\in\cC^{\infty}(U_j,\mathbb{R})$.

     There is a smooth positive function $V$ on $\overline\C\setminus\{p_1, p_2,\dots,p_n\}$ such that $\rmd\sigma^2 = V\rmd\sigma^2_0$. We may define $\rmd s^2=V^{\frac{2}{2-a}}\rmd\sigma^2_0$ on $\overline\C\setminus\{p_1, p_2,\dots,p_n\}$. Then, by Propositions \ref{propV} and \ref{propVconical2}, $\rmd s^2$ extends to a metric on $\Sigma$ with the desired properties.
\end{proof}

\begin{rem} \label{sphere-reciprocal}
We can give another argument starting with $\rmd\sigma^2$, the same metric with conical singularity as in the previous proof. 
  Let $\Omega$ be the area form of $\rmd\sigma^2$ on $\overline\C\setminus\{p_1, p_2,\dots,p_n\}$. It extends smoothly at the $p_j$ since the $m_j$ are integers.  Applying the Gauss-Bonnet formula with conical singularities \cite[Proposition 1]{troyanov1991prescribing} to $\rmd\sigma^2$ gives
 $$\frac{\ell+1}{2\pi}\mathrm{Area}(\rmd\sigma^2)=\chi(\overline\C)+\sum_{j=1}^nm_j=2(\ell+1)$$ and so $\Area(\rmd\sigma^2)=4\pi=2\pi\chi(\overline\C)$. So, by a theorem of Wallach and Warner \cite{ww}, there exists a conformal smooth metric $\rmd s^2$ on $\overline\C$ whose curvature $2$-form is $\Omega$. We have $\Omega=K\mu$ where $K$ is the curvature of $\rmd s^2$ and $\mu$ its area form. So, $K\geqslant0$ and $\rmd\sigma^2=|K|\rmd s^2$. By Corollary \ref{constantcurvature} (item (2) with $a=-2\ell$ and $c=0$), $\rmd s^2$ is a generalized Ricci metric of type $(-2\ell,0,0)$ on $\Sigma$.
\end{rem}

In the case where the curvature vanishes at only two points, we can make these metrics explicit.

\begin{prop} \label{sphere2} 
For $\ell\in\N^*$ and $\tau\in[0,+\infty)$ we consider on the Riemann sphere $\overline\C$ the metric 
$$\rmd\sigma^2_{\ell,\tau}= \frac{|\rmd z|^2}{\left(|1+\tau z^{\ell+1}|^2+|z|^{2(\ell+1)}\right)^{2/(\ell+1)}}.$$
\begin{enumerate}
 \item For $\ell\in\N^*$ and $\tau\in[0,+\infty)$, the surface $(\overline\C,\rmd\sigma^2_{\ell,\tau})$ is a generalized Ricci sphere of type $(-2\ell,0,0)$ and its curvature vanishes exactly at $0$ and $\infty$.
 \item Let $\ell\in\N^*$. Then any generalized Ricci sphere of type $(-2\ell,0,0)$ whose curvature vanishes at exactly two points is homothetic to $(\overline\C,\rmd\sigma^2_{\ell,\tau})$ for some unique $\tau\in[0,+\infty)$.
 \item Any generalized Ricci sphere of type $(-2,0,0)$ is homothetic to $(\overline\C,\rmd\sigma^2_{1,\tau})$  for some unique $\tau\in[0,+\infty)$.
\end{enumerate}
\end{prop}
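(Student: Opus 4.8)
The plan is to verify item (1) by a direct computation, then deduce items (2) and (3) from the classification of constant curvature metrics with conical singularities, combined with Corollary \ref{constantcurvature}. For item (1), I would first rewrite $\rmd\sigma^2_{\ell,\tau}$ using the substitution $w=z^{\ell+1}$ away from $0$ and $\infty$, or more directly observe that the conformal factor is $e^{2u_1}$ with $e^{u_1}=\left(|1+\tau z^{\ell+1}|^2+|z|^{2(\ell+1)}\right)^{-1/(\ell+1)}$. Then I would compute the curvature $K$ via $K=4e^{2u_1}\left(-\tfrac12 u_1\right)_{z\bar z}=-2e^{2u_1}(u_1)_{z\bar z}$ — being careful with the sign convention of the paper — and show that $|K|\rmd\sigma^2_{\ell,\tau}$ has constant curvature $1-\tfrac{a}{2}=\ell+1>0$ (with $a=-2\ell$), which by item (2) of Corollary \ref{constantcurvature} (with $b=c=0$) is equivalent to $(\overline\C,\rmd\sigma^2_{\ell,\tau})$ being a generalized Ricci surface of type $(-2\ell,0,0)$. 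Concretely: the metric $|K|\rmd\sigma^2_{\ell,\tau}$ should equal a spherical cone metric; a natural guess is that after the change $w=z^{\ell+1}$ it becomes (a multiple of) $\frac{|\rmd w|^2}{(|1+\tau w|^2+|w|^2)^2}$, which is a round metric of curvature proportional to $1$ on $\overline\C$ pulled back, with conical singularities of order $\ell$ at $w=0$ and $w=\infty$ (i.e., at $z=0,\infty$), i.e., angle $2\pi(\ell+1)$. The vanishing of $K$ exactly at $0$ and $\infty$ then follows since $\sqrt{|K|}$ has a zero of order $\ell$ there and nowhere else, matching Proposition \ref{propVconical2} with $m=\ell$ (note $2\ell = m_1+m_2$ with $m_1=m_2=\ell$, the boundary case $m_j=\ell$ of item (4) of Proposition \ref{sphere}).

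For item (2), let $(\overline\C,\rmd s^2)$ be a generalized Ricci sphere of type $(-2\ell,0,0)$ whose curvature vanishes at exactly two points $p_1,p_2$. By item (4) of Proposition \ref{sphere}, $K\geqslant 0$, the orders $m_1,m_2$ of $\sqrt K$ at $p_1,p_2$ satisfy $m_1+m_2=2\ell$ and $m_j\leqslant\ell$, forcing $m_1=m_2=\ell$. By Corollary \ref{constantcurvature}(2), the metric $\rmd\sigma^2:=|K|\rmd s^2$ has constant curvature $\ell+1$ on $\overline\C\setminus\{p_1,p_2\}$ with conical singularities of angle $2\pi(\ell+1)$ at $p_1$ and $p_2$. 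Normalizing by a Möbius transformation we may assume $p_1=0$, $p_2=\infty$. Such spherical metrics with two antipodal-type cone points of equal integer angle are classified: their developing map is a rational function $\overline\C\to\overline\C$ of degree $\ell+1$ whose only critical points are $0$ and $\infty$, each of multiplicity $\ell$; up to post-composition by an isometry and the residual automorphisms fixing $\{0,\infty\}$, such a function is $w\mapsto\frac{z^{\ell+1}}{1+\tau z^{\ell+1}}$ for a suitable parameter, and pulling back the round metric gives exactly $\rmd\sigma^2_{\ell,\tau}$ up to homothety. Dividing back by $|K|$ (using Remark \ref{rkdual} or directly Corollary \ref{conformalricci}/\ref{constantcurvature}) recovers $\rmd s^2$ as homothetic to $\rmd\sigma^2_{\ell,\tau}$; uniqueness of $\tau$ follows because distinct $\tau$ give non-homothetic $\rmd\sigma^2_{\ell,\tau}$ (e.g. the distance between the two cone points, or the spherical area of a fundamental region, distinguishes them). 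Item (3) is the special case $\ell=1$: by item (4) of Proposition \ref{sphere} with $a=-2$, every zero of $\sqrt K$ has order at most $1$ and $N=2$, so $K$ vanishes at exactly two simple zeroes, and item (2) applies.

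The main obstacle I anticipate is the explicit identification in item (2): showing that \emph{every} spherical cone metric on $\overline\C$ with two integer cone points of order $\ell$ at $0$ and $\infty$ is, up to homothety and rotation, one of the $\rmd\sigma^2_{\ell,\tau}$. This is a statement about the moduli of such metrics — one must check the moduli space is exactly the one-parameter family parametrized by $\tau\in[0,\infty)$, with $\tau=0$ the most symmetric (doubled round sphere) case. I would handle it by the developing-map argument: the developing map $G$ is a degree $\ell+1$ rational map branched only over its two critical values with the full ramification $\ell$ at each of $0$ and $\infty$; writing $G$ explicitly and quotienting by the automorphisms $z\mapsto\lambda z$ of $\overline\C$ fixing $\{0,\infty\}$ and by the isometries of the target round sphere leaves exactly one real parameter, which one matches with $\tau$ (the case $\tau=0$ corresponding to $G(z)=z^{\ell+1}$, i.e. the $(\ell+1)$-fold branched cover of the round sphere). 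The rest — item (1)'s curvature computation and the reductions in item (3) — is routine given the tools already established in the excerpt.
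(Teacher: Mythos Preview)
Your overall strategy matches the paper's: reduce to the classification of spherical cone metrics via Corollary \ref{constantcurvature}, identify the cone metric $K\rmd s^2$ using Troyanov's classification of two-cone-point spherical metrics, then recover $\rmd s^2$. Items (1) and (3) are handled correctly.

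There is, however, a genuine gap in item (2) at the step ``dividing back by $|K|$ \dots\ recovers $\rmd s^2$ as homothetic to $\rmd\sigma^2_{\ell,\tau}$''. The results you cite do not justify this. Remark \ref{rkdual} rests on Corollary \ref{conformalricci2bis}, whose hypothesis $b\neq(2-a)c$ fails when $b=c=0$. Likewise, the involution noted after Corollary \ref{conformalricci} is valid only for $\gamma\neq1$; for $\gamma=1$ the image metric has constant curvature and there is no built-in inverse. So from the cone metric $K\rmd s^2$ alone you have not explained why $\rmd s^2$ is determined up to homothety, nor why it must be one of the $\rmd\sigma^2_{\ell,\tau}$. (Relatedly, the sentence ``pulling back the round metric gives exactly $\rmd\sigma^2_{\ell,\tau}$'' is a slip: the pullback is the \emph{cone} metric $K\rmd s^2$, not $\rmd s^2$.)

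The paper fills this gap by bringing in the second piece of data furnished by Lemma \ref{lemmaholo}: writing $\rmd s^2=e^{-2f}|\rmd z|^2$, one has a holomorphic $h$ on $\C$ with $Ke^{2\ell f}=|h|^2$; the zero and growth behaviour at $0$ and $\infty$ force $h(z)=\alpha z^\ell$. Combining $Ke^{-2f}$ (known explicitly from Troyanov's normal form) with $Ke^{2\ell f}=|\alpha z^\ell|^2$ yields $e^{-2(\ell+1)f}$ and hence $\rmd s^2$ explicitly, which is visibly a homothety of $\rmd\sigma^2_{\ell,\tau}$. Equivalently, you could invoke the Wallach--Warner argument used in Remark \ref{sphere-reciprocal}: the curvature $2$-form $K\mu$ determines a smooth conformal metric up to homothety. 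Either route closes the gap; your cited results do not.
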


\begin{proof} 
\begin{enumerate}
 \item These metrics are obtained by the method of Theorem \ref{sphere0} starting with the expression of spherical metrics with two conical singularities on a sphere by Troyanov \cite{troyanov}; this can be checked in the proof of the next item.
\item Let $\ell\in\N^*$ and let $(\Sigma,\rmd s^2)$ be a generalized Ricci sphere of type $(-2\ell,0,0)$ whose curvature $K$ vanishes at exactly two points. Without loss of generality, we assume that $\Sigma=\overline\C$ and that the zeroes of $K$ are at $0$ and $\infty$. 

As seen in the proof of Proposition \ref{sphere}, $K\geqslant0$ and $K$ is not identically $0$. Then by Corollary \ref{constantcurvature} the metric $K\rmd s^2$ is a metric with constant curvature $\ell+1$ and exactly two conical singularities on a sphere, and the orders of the conical singularities are equal to the orders of $\sqrt K$, hence integers. Moreover, by Proposition \ref{sphere}, the sum of these orders is equal to $2\ell$.

Then by Troyanov's classification of constant curvature metrics with two conical singularities on a sphere \cite{troyanov} (see Theorems I and II and the discussion that follows there), the orders of the conical singularities are equal, and there exists a constant $\tau\geqslant0$ such that, possibly after a conformal change of parameter $z\mapsto\lambda z$, $\lambda\in\C^*$,
 $$K\rmd s^2=\frac{4(\ell+1)|z|^{2\ell}}{\left(|1+\tau z^{\ell+1}|^2+|z|^{2(\ell+1)}\right)^2}|\rmd z|^2.$$
 
 Also, writing $\rmd s^2=e^{-2f}|\rmd z|^2$, then by Lemma \ref{lemmaholo} there exists a holomorphic function $h$ on $\C$ such that $e^{2\ell f}K=|h|^2$. Then, by the aforementioned properties of $\sqrt K$, $h$ does not vanish on $\C^*$, has a zero of order $\ell$ at $0$ and $h(z)z^{-2\ell}\sim\alpha z^{-\ell}$ at $\infty$ for some $\alpha\in\C^*$ (see the discussion in Section \ref{sec:conformal} with the change of variable $z=\psi(w)=1/w$). This implies that $h(z)=\alpha z^\ell$ for all $z\in\C$.
 
 Hence, we have
 $$Ke^{-2f}=\frac{4(\ell+1)|z|^{2\ell}}{\left(|1+\tau z^{\ell+1}|^2+|z|^{2(\ell+1)}\right)^2},\quad\quad Ke^{2\ell f}=|\alpha z^\ell|^2,$$
 so $$\rmd s^2=e^{-2f}|\rmd z|^2=
 \frac{(4(\ell+1))^{1/(\ell+1)}|\rmd z|^2}{|\alpha|^{2/(\ell+1)}\left(|1+\tau z^{\ell+1}|^2+|z|^{2(\ell+1)}\right)^{2/(\ell+1)}},$$ which proves the existence of $\tau$ as claimed.
 
 Finally, two homothetic generalized Ricci surfaces of type $(-2\ell,0,0)$ have the same associated constant curvature metric $K\rmd s^2$ (up to an isometry), and $\tau$ is a function of the distance between the two conical singularities in this metric \cite{troyanov}. This proves the uniqueness of $\tau$.
 \item Let $(\Sigma,\rmd s^2)$ be a generalized Ricci sphere of type $(-2,0,0)$. By Proposition \ref{sphere}, $K\geqslant0$ and $\sqrt K$ has exactly two zeroes, each of order $1$. Then the result follows from (2) with $\ell=1$.
 \end{enumerate}
\end{proof}

\begin{rem} \label{spherenotsmooth}
 We may also consider the same metric with $\tau=0$ and replacing the integer $\ell\in\N^*$ by a real number $\beta>0$. Then, the metric is at least of class $\cC^2$ on $\overline\C$, smooth on $\C^*$, its curvature vanishes at $0$ and $\infty$, and it is a generalized Ricci metric of type $(-2\beta,0,0)$ on $\C^*$. However, if $\beta\notin\N^*$, then it is not smooth at $0$ and $\infty$; see also Remark \ref{rknotsmooth}.
\end{rem}

We now construct generalized Ricci spheres of type $(a,0,c)$ with $a\in-2\N^*$ and $c\in\R^*$ admitting a rotational invariance.

 \begin{prop} \label{spherea0c}
 Let $\ell\in\N^*$, $c\in\R^*$ and $\varepsilon\in\{-1,1\}$. If $c<0$, we assume that $\varepsilon=1$. Then there exist
rotationally invariant generalized Ricci metrics of type $(-2\ell,0,c)$ on a sphere such that $\sgn(K-c)=\varepsilon$.

Moreover, there exist infinitely many such metrics on a sphere that are not isometric one to another.
 \end{prop}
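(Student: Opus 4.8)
The plan is to produce such metrics explicitly among rotationally invariant metrics, which turns the problem into an ordinary differential equation. Write a rotationally invariant metric as $\rmd s^2=\rmd r^2+\psi(r)^2\,\rmd\theta^2$ with $\psi>0$ on an interval $(0,\bar r)$; its curvature is $K=-\psi''/\psi$ and its Laplace--Beltrami operator acts on functions of $r$ alone by $\Delta u=\frac1\psi(\psi u')'$. For this to extend to a smooth metric on a sphere one needs $\psi(0)=\psi(\bar r)=0$, $\psi'(0)=1$, $\psi'(\bar r)=-1$, and $\psi$ odd at both endpoints. The key remark is that if one imposes $|K-c|=\lambda\psi^{2\ell}$ for a constant $\lambda>0$, then $(\log|K-c|)'=2\ell\,\psi'/\psi$, so $\Delta\log|K-c|=\frac1\psi(2\ell\psi')'=2\ell\,\psi''/\psi=-2\ell K$, which by Lemma~\ref{deltalog} is exactly the generalized Ricci equation of type $(-2\ell,0,c)$. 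Moreover $K-c=\varepsilon\lambda\psi^{2\ell}$ then has sign $\varepsilon$ on $(0,\bar r)$ and vanishes only at the two poles (to order $2\ell$, so that $\sqrt{|K-c|}$ vanishes to order $\ell$, consistently with item (2) of Proposition~\ref{sphere}); hence $\sgn(K-c)=\varepsilon$.

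Combining $|K-c|=\lambda\psi^{2\ell}$ with $K=-\psi''/\psi$ gives the autonomous equation
\[\psi''+c\psi+\varepsilon\lambda\psi^{2\ell+1}=0,\]
and I would solve it with the initial conditions $\psi(0)=0$, $\psi'(0)=1$. Since the nonlinearity is odd in $\psi$, multiplying by $\psi'$ and integrating yields the first integral
\[(\psi')^2=P(\psi),\qquad P(\psi):=1-c\psi^2-\frac{\varepsilon\lambda}{\ell+1}\psi^{2\ell+2}.\]
The crux is then a qualitative study of $P$ on $[0,+\infty)$ showing that, under the hypotheses of the statement, $P$ has a smallest positive zero $\psi_{\max}$ and that it is simple. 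If $\varepsilon=1$ this holds for every $\lambda>0$: one has $P(0)=1>0$ and $P(\psi)\to-\infty$ as $\psi\to+\infty$, and inspecting the sign of $P'(\psi)=-2\psi(c+\lambda\psi^{2\ell})$ shows the first zero is simple whether $c>0$ or $c<0$. If $\varepsilon=-1$ (so $c>0$), then $P$ has a unique positive critical point $\psi_1=(c/\lambda)^{1/(2\ell)}$, a minimum, with $P(\psi_1)=1-\frac{\ell c}{\ell+1}(c/\lambda)^{1/\ell}$, which is negative precisely when $\lambda<\lambda_0:=\ell^\ell c^{\ell+1}/(\ell+1)^\ell$; for such $\lambda$ the function $P$ again has a simple first zero $\psi_{\max}\in(0,\psi_1)$. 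Given this, $\psi$ increases on $[0,r_*]$ from $0$ to $\psi_{\max}$, where $r_*=\int_0^{\psi_{\max}}P(\psi)^{-1/2}\,\rmd\psi<\infty$ (the integrand being integrable because the zero of $P$ at $\psi_{\max}$ is simple); reversibility of the autonomous equation makes $\psi$ symmetric about $r_*$, so it comes back to $0$ at $\bar r:=2r_*$ with $\psi'(\bar r)=-1$, and the oddness of the nonlinearity forces $\psi$ to be odd about both $0$ and $\bar r$. By the standard smoothness criterion for rotationally symmetric metrics this gives a smooth metric on a sphere, which by the first paragraph is a generalized Ricci metric of type $(-2\ell,0,c)$ with $\sgn(K-c)=\varepsilon$.

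Finally, the above produces one such metric $\rmd s^2_\lambda$ for each $\lambda$ in an open interval ($(0,+\infty)$ if $\varepsilon=1$, and $(0,\lambda_0)$ if $\varepsilon=-1$). Distinct values of $\lambda$ give non-isometric metrics: the profile $\psi$, and hence $\rmd s^2_\lambda$ up to isometry, is uniquely determined by $\lambda$ (two values of $\lambda$ sharing the same solution $\psi$ of the equation would have to coincide); alternatively one may distinguish the metrics by an isometry invariant such as their areas $4\pi\int_0^{\psi_{\max}}\psi\,P(\psi)^{-1/2}\,\rmd\psi$. Since the interval of admissible $\lambda$ is infinite, this yields infinitely many pairwise non-isometric examples. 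I expect the only genuine work to be the qualitative analysis of $P$ (existence and simplicity of its first positive zero under the hypotheses on $\varepsilon$ and $c$) and the verification that the metric is $\cC^\infty$ across the two poles; the generalized Ricci equation itself then holds by construction, and the non-isometry statement is routine.
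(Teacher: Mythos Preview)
Your argument is correct and follows the same strategy as the paper's proof: use rotational invariance to reduce to a second-order ODE, analyse its first integral, and produce a one-parameter family of smooth metrics on the sphere. The execution differs only in the choice of coordinates: you work in geodesic polar coordinates $\rmd s^2=\rmd r^2+\psi(r)^2\,\rmd\theta^2$ and impose $K-c=\varepsilon\lambda\psi^{2\ell}$ directly, whereas the paper works in a conformal coordinate $z$, writes $\rmd s^2=e^{-2y(|z|)}|\rmd z|^2$, and invokes Lemma~\ref{lemmaholo} with $h(z)=\sqrt{|\xi|}\,z^\ell$. After the substitution $L(u)=y(e^u)-u$ the paper arrives at the same first integral (your $\varepsilon\lambda$ is their $\xi$, and your $P$ is their $1-\Phi$), and your threshold $\lambda_0=\ell^\ell c^{\ell+1}/(\ell+1)^\ell$ in the case $\varepsilon=-1$ is exactly their condition $\xi+\left(\frac{\ell}{\ell+1}\right)^\ell c^{\ell+1}>0$. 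Your version is more self-contained and checks the generalized Ricci condition by a one-line computation of $\Delta\log|K-c|$; the paper's version fits into its conformal/holomorphic framework and makes the extension across $z=\infty$ explicit.

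One point should be tightened. For the non-isometry statement you only argue that $\lambda\mapsto\psi_\lambda$ is injective; you still need to explain why an isometry between $\rmd s^2_\lambda$ and $\rmd s^2_{\tilde\lambda}$ forces $\psi_\lambda=\psi_{\tilde\lambda}$. This follows because the two poles are intrinsically characterised as the only points where $K=c$, so any isometry sends poles to poles and therefore preserves the length $2\pi\psi(r)$ of geodesic circles of radius $r$ about a pole; then subtracting the two ODEs gives $\lambda=\tilde\lambda$. (The paper argues analogously: isometries are (anti)conformal automorphisms of $\overline\C$ fixing $\{0,\infty\}$, and one checks directly that these preserve the parameter $\xi$.) Your alternative suggestion of distinguishing the metrics by their total area would require a separate monotonicity computation and is not needed.
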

 
 \begin{proof}
  Let $\xi\in\R^*$ such that $\sgn\xi=\varepsilon$. If $c>0$ and $\xi<0$, we moreover assume that $\xi+\left(\frac{\ell}{\ell+1}\right)^\ell c^{\ell+1}>0$. Let $y_0\in\R$. Then there exists a smooth solution $y:(-\delta,\delta)\to\R$ to the ordinary differential equation
  \begin{equation} \label{sphere-edoy}
   y'(t)=\frac{cte^{-2y(t)}+\frac\xi{\ell+1}t^{2\ell+1}e^{-2(\ell+1)y(t)}}{1+\sqrt{1-ct^2e^{-2y(t)}-\frac\xi{\ell+1}t^{2\ell+2}e^{-2(\ell+1)y(t)}}}
  \end{equation}
with the initial condition $y(0)=y_0$ for some $\delta>0$, and this function $y$ is even.

For $u\in(-\infty,\log\delta)$ we set 
$$L(u)=y(e^u)-u.$$ Then, on $(-\infty,\log\delta)$, this function $L$ satisfies
\begin{equation} \label{sphere-edoL1}
 L'(u)^2+\Phi(L(u))=1
\end{equation}
with $\Phi(r)=ce^{-2r}+\frac\xi{\ell+1}e^{-2(\ell+1)r}$ and
\begin{equation} \label{sphere-edoL2}
 L''(u)=ce^{-2L(u)}+\xi e^{-2(\ell+1)L(u)}.
\end{equation}
Moreover, we have $\displaystyle{\lim_{u\to-\infty}L(u)=+\infty}$.

The hypotheses on $c$ and $\xi$ imply that there is no constant solution to \eqref{sphere-edoL1} and  \eqref{sphere-edoL2}. Also,
\begin{itemize}
 \item if $c>0$ and $\xi>0$, then $\Phi$ is decreasing on $\R$, with $\displaystyle{\lim_{r\to-\infty}\Phi(r)=+\infty}$ and $\displaystyle{\lim_{r\to+\infty}\Phi(r)=0}$;
 \item if $c>0$ and $\xi<0$, then $\Phi$ is increasing on $\left(-\infty,-\frac1{2\ell}\log\left(-\frac c\xi\right)\right)$ and decreasing on $\left(-\frac1{2\ell}\log\left(-\frac c\xi\right),+\infty\right)$, with $\displaystyle{\lim_{r\to-\infty}\Phi(r)=-\infty}$, $\displaystyle{\lim_{r\to+\infty}\Phi(r)=0}$ and $\max\Phi=\frac{\ell c}{\ell+1}\left(-\frac c\xi\right)^{-\frac1\ell}>1$;
 \item if $c<0$ and $\xi>0$, then $\Phi$ is decreasing on $\left(-\infty,-\frac1{2\ell}\log\left(-\frac c\xi\right)\right)$ and increasing on $\left(-\frac1{2\ell}\log\left(-\frac c\xi\right),+\infty\right)$, with $\displaystyle{\lim_{r\to-\infty}\Phi(r)=+\infty}$, $\displaystyle{\lim_{r\to+\infty}\Phi(r)=0}$ and $\min\Phi=\frac{\ell c}{\ell+1}\left(-\frac c\xi\right)^{-\frac1\ell}<0$.
\end{itemize}
Hence, in all cases we can extend $L$ to a smooth function $L:\R\to\R$ satisfying \eqref{sphere-edoL1} and \eqref{sphere-edoL2} on $\R$ and for which there exists $q\in\R$ such that
\begin{equation} \label{sphere-Leven}
 L(-u)=L(u+2q)
\end{equation}
for all $u\in\R$. Consequently, we can extend $y$ to an even smooth function $y:\R\to\R$ setting $$y(t)=L(\log|t|)+\log|t|$$ for all $t\in\R^*$. Then we deduce from equation \eqref{sphere-edoL2} that 
\begin{equation} \label{sphere-edoy2}
 y''(t)+\frac1ty'(t)=ce^{-2y(t)}+\xi t^{2\ell}e^{-2(\ell+1)y(t)}
\end{equation}
for $t\in\R^*$.

We now define $f:\C\to\R$ by $f(z)=y(|z|)$ and we consider the metric $\rmd s^2=e^{-2f}|\rmd z|^2$ on $\C$. Then $\rmd s^2$ is smooth on $\C$ and we deduce from \eqref{sphere-edoy2} that its curvature $K=4e^{2f}f_{z\bar z}$ satisfies $e^{2\ell f}(K-c)=\varepsilon|h|^2$ with $h(z)=\sqrt{|\xi|}z^\ell$. Then, by Lemma \ref{lemmaholo}, $\rmd s^2$ is a generalized Ricci metric of type $(-2\ell,0,c)$ on $\C$ with $\sgn(K-c)=\varepsilon$.

It remains to check that $\rmd s^2$ extends smoothly at $\infty$. We first deduce from \eqref{sphere-Leven} that $$y\left(\frac{e^{2q}}t\right)+2\log t=y(t)+2q$$ for all $t>0$.
By applying the change of variable $w=\frac{e^{2q}}z$ we have
$$\rmd s^2=e^{-2\left(y\left(\frac{e^{2q}}{|w|}\right)+2\log|w|-2q\right)}|\rmd w|^2
=e^{-2y(|w|)}|\rmd w|^2,$$ which extends smoothly at $w=0$. This proves that $\rmd s^2$ has the required properties.

(We notice that, if we change the initial condition $y_0$, then $y$ is replaced by $t\mapsto y(e^\beta t)-\beta$ for some $\beta\in\R$, but then the new metric is isometric to the initial one via $z\mapsto e^\beta z$.)

Let $\rmd\tilde s^2=e^{-2\tilde y(|z|)}|\rmd z|^2$ be another metric on $\overline\C$ with $\tilde y$ obtained in the same way replacing $\xi$ by $\tilde\xi\in\R^*$ with the same sign. Assume that $\rmd s$ and $\rmd\tilde s^2$ are isometric. An isometry between them is a conformal or anticonformal automorphism of $\overline\C$ fixing the set $\{0,\infty\}$. Since $z\mapsto e^{i\theta}z$ for $\theta\in\R$, $z\mapsto\bar z$ and $z\mapsto\frac{e^{2q}}z$ are isometries of $\rmd s^2$, we may assume that this isometry is
 $z\mapsto e^\beta z$ for some $\beta\in\R$.
Then $\tilde y(t)=y(e^\beta t)-\beta$ for all $t\in\R$, and so $\tilde y$ satisfies \eqref{sphere-edoy2}. Since it also satisfies \eqref{sphere-edoy2} with $\xi$ replaced by $\tilde\xi$, we conclude that $\tilde\xi=\xi$.

Consequently, if we vary the value of $\xi$, then we obtain metrics that are not isometric one to another.
 \end{proof}

 \begin{rem}
  For $\ell=1$ we have an explicit expression: $y(t)=\frac12\log\left(\left(t^2+\frac c4\right)^2+\frac\xi8\right)$, with $\xi>0$ or $-\frac{c^2}2<\xi<0$ if $c>0$, and $\xi>0$ if $c<0$.
 \end{rem}

\subsection{Tori}

\begin{prop} \label{torus}
Let $(a,b,c)\in\R^3$ and let $(\Sigma,\rmd s^2)$ be a generalized Ricci torus of type $(a,b,c)$.
 \begin{enumerate}
  \item If $c=0$, then $\Sigma$ is flat.
  \item If $c>0$, then $b\leqslant0$ and $K\leqslant c$; if moreover $b=0$ and $\Sigma$ is not flat, then $a>0$ and $K<c$.
  \item If $c<0$, then $b\leqslant0$ and $K\geqslant c$; if moreover $b=0$ and $\Sigma$ is not flat, then $a<0$ and $K>c$.
 \end{enumerate}
\end{prop}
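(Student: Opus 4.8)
The plan is to combine four inputs: Theorem~\ref{main} (so that $K-c$ cannot change sign), the Gauss--Bonnet formula $\int_\Sigma K\,\mu=2\pi\chi(\Sigma)=0$ (valid because $\Sigma$ is a torus), and the two integral identities \eqref{avtintegral} and \eqref{ricciintegral} specialized to $\chi(\Sigma)=0$; all of them become available once one knows $K\not\equiv c$.

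First I would treat item~(1). By Theorem~\ref{main}, either $K\equiv 0$, in which case $\Sigma$ is flat, or the zeroes of $K$ are isolated and $K$ keeps a constant sign on $\Sigma$; in the latter case $\int_\Sigma K\,\mu=0$ forces $K\equiv 0$, so $\Sigma$ is flat anyway. I would also record here an observation used throughout the rest: for a torus, Gauss--Bonnet shows that a constant curvature metric must be flat, so ``$\Sigma$ is not flat'' is equivalent to ``$K$ is non-constant'', and in particular $K\not\equiv c$ whenever $c\neq 0$.

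Next, for the first assertions of items~(2) and~(3), assume $c\neq 0$, so $K\not\equiv c$ and Theorem~\ref{main} applies: the zeroes of $K-c$ are isolated and $K-c$ has a well-defined sign. If $c>0$, then $K\leqslant c$ everywhere, for otherwise Theorem~\ref{main} would force $K\geqslant c$ everywhere, whence $\int_\Sigma K\,\mu\geqslant c\,\Area(\Sigma)>0$, contradicting Gauss--Bonnet; symmetrically, $c<0$ gives $K\geqslant c$. Since $K\not\equiv c$, identity \eqref{avtintegral} with $\chi(\Sigma)=0$ reads $\tfrac b2\,\Area(\Sigma)=-2\pi N$ with $N\in\N$, hence $b\leqslant 0$; moreover $b=0$ forces $N=0$, i.e. $K-c$ never vanishes, so the inequality on $K$ becomes strict.

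It remains to determine the sign of $a$ when $b=0$, $c\neq 0$ and $\Sigma$ is not flat; this is the step I expect to be the main obstacle, since \eqref{ricciintegral} alone is inconclusive (the integrand $aK(K-c)^2$ has no a priori sign, as $K$ takes both signs). Here $K$ is non-constant with $\int_\Sigma K\,\mu=0$, so $\min_\Sigma K<0<\max_\Sigma K$. Suppose $c>0$ (the case $c<0$ is symmetric, replacing ``maximum'' by ``minimum''). At a maximum point $p$ of $K$, with $M:=K(p)>0$, one has $\nabla K(p)=0$ and $\Delta K(p)\leqslant 0$, so \eqref{ricci} with $b=0$ gives $(c-M)\Delta K(p)+aM(M-c)^2=0$, i.e. $\Delta K(p)=-aM(c-M)$ since $c-M>0$ (recall $K<c$ from the previous step); from $\Delta K(p)\leqslant 0$, $c-M>0$ and $M>0$ we deduce $a\geqslant 0$. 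Finally $a=0$ is impossible: if $a=b=0$, then by Lemma~\ref{deltalog} the function $\log(c-K)$ would be harmonic on the compact surface $\Sigma$, hence constant, forcing $K$ constant. Thus $a>0$, and $a<0$ in the case $c<0$. (Alternatively, for the sign of $a$ one could bypass the maximum principle using Remark~\ref{rktorus}: when $b=0$ and $c\neq 0$ the lifted holomorphic function is a nonzero constant, so $c-K=\lambda e^{af}$ on the torus with $\lambda>0$ and $f$ globally defined when $c>0$, and integrating $\Delta f=K=c-\lambda e^{af}$ against $e^{-af}$, combined with the Cauchy--Schwarz inequality $\int_\Sigma e^{af}\mu\cdot\int_\Sigma e^{-af}\mu\geqslant\Area(\Sigma)^2$, yields $a\int_\Sigma e^{-af}||\nabla f||^2\mu>0$.)
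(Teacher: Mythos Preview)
Your proof is correct and follows essentially the same route as the paper: Theorem~\ref{main} plus Gauss--Bonnet for the sign of $K-c$, identity~\eqref{avtintegral} for $b\leqslant 0$ and $N=0$, and the pointwise analysis of~\eqref{ricci} at an extremum of $K$ for the sign of $a$. The only cosmetic differences are that the paper looks at both a maximum and a minimum of $K$ (either alone suffices, as you do) and excludes $a=0$ via~\eqref{ricciintegral} rather than via harmonicity of $\log(c-K)$; your parenthetical alternative through Remark~\ref{rktorus} is a nice extra, though as stated it only gives $a\int_\Sigma e^{-af}\|\nabla f\|^2\mu\geqslant 0$ with equality exactly when $a=0$, so you still need your separate argument to exclude $a=0$.
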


\begin{proof}
 \begin{enumerate}
  \item If $c=0$, then Theorem \ref{main} implies that $K$ has constant sign. We conclude that $K\equiv0$ by the Gauss-Bonnet formula.
  \item If $c>0$, then Theorem \ref{main} implies that $K-c$ has constant sign, and so $K\leqslant c$ by the Gauss-Bonnet formula, and $K$ is not identically $c$. Then, \eqref{avtintegral} with $\chi(\Sigma)=0$ implies that $b\leqslant0$.
  
  Assume moreover that $b=0$ and $\Sigma$ is not flat. Then $N=0$, i.e., $K<c$. Moreover, $K$ is not constant (otherwise the Gauss-Bonnet formula would give $K\equiv0$). Considering \eqref{ricci} at a point where $K$ attains a minimum and at a point where $K$ attains a maximum yields $a\min K\leqslant0\leqslant a\max K$. Consequently, $a\geqslant0$. Finally, if $a=0$, then \eqref{ricciintegral} implies that $K$ is constant, which is a contradiction.
  \item If $c<0$, we proceed with the same arguments as when $c>0$.
 \end{enumerate}
\end{proof}

\begin{ex} \label{ex:torus}
Let $c>0$.
There exist non flat generalized Ricci tori of type $(6,-12c,4c)$, $(6,-2c,c)$ and $(6,-c,c)$; see Example \ref{ex:superminimal}.
\end{ex}

\begin{ex} \label{delaunay} {\it Delaunay-type metrics.}
Let $(a,c)\in\R^2$ such that $a\cdot c>0$. The ordinary differential equation 
\begin{equation} \label{edodelaunay}
 y''=-ce^{(a-2)y}+ce^{-2y}
\end{equation}
admits a prime integral:
$$\frac12(y')^2+\Phi(y)=E$$
with $\Phi(r)=\frac c{a-2}e^{(a-2)r}+\frac c2e^{-2r}$ if $a\neq2$, $\Phi(r)=cr+\frac c2e^{-2r}$ if $a=2$.
Then the function $\Phi$ attains a minimum at $0$, is decreasing on $(-\infty,0)$, increasing on $(0,+\infty)$ and satisfies $\displaystyle{\lim_{r\to-\infty}\Phi(r)=+\infty}$. If we set $\displaystyle{E\in\left(\min\Phi,\lim_{r\to+\infty}\Phi(r)\right)}$, then the level set of the prime integral is a compact smooth curve around the equilibrium $(0,0)$. Hence this yields a non-constant solution $y$ of \eqref{edodelaunay} that is periodic for some period $T>0$. Hence, if we set $f(u+iv)=y(v)$, then $f_{z\bar z}=-\frac c4e^{(a-2)f}+\frac c4e^{-2f}$, i.e., the curvature $K$ of the metric $e^{-2f}|\rmd z|^2$ satisfies $e^{-af}(K-c)=-c$. Hence, by Lemma \ref{lemmaholo} (with $h\equiv\sqrt{|c|}$, $b=0$ and $\varepsilon=\sgn c$), $e^{-2f}|\rmd z|^2$ is a generalized Ricci metric on $\C$ of type $(a,0,c)$. It passes to the torus $\C/\Gamma$ with $\Gamma=\alpha\Z\oplus(\beta+iT)\Z$ for any $(\alpha,\beta)\in\R^*\times\R$. This metric cannot be flat, since $f$ is not harmonic.

In conclusion, in each conformal class on a torus there exist non constant curvature generalized Ricci metrics of type $(a,0,c)$. (If $a=4$ and $c>0$, these are the metrics of Delaunay constant mean curvature surfaces in $\R^3$.)
\end{ex}

\subsection{Surfaces of genus at least 2}

\begin{prop} \label{highgenus}
Let $(a,b,c)\in\R^3$ and let $(\Sigma,\rmd s^2)$ be a compact orientable generalized Ricci surface of type $(a,b,c)$ of genus $g\geqslant2$.
\begin{enumerate}
\item If $b>0$ and $K$ is not identically $c$, then $a>0$.
 \item If $b=0$ and $K$ is not identically $c$, then $a=\frac N{g-1}\in\frac1{g-1}\N$, where $N$ is the sum of the orders of the zeroes of $\sqrt{|K-c|}$.
 \item If $a=b=0$, then $K$ is constant.
\end{enumerate}
\end{prop}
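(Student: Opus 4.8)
The plan is to mimic the proof of Proposition \ref{sphere}, the only difference being that a compact orientable surface of genus $g\geqslant2$ has $\chi(\Sigma)=2-2g=-2(g-1)<0$ rather than $\chi(\Sigma)=2$. All three items should come out of the two integral identities \eqref{avtintegral} and \eqref{ricciintegral} established above.

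For items (1) and (2) I would assume $K$ is not identically $c$, so that Theorem \ref{main} applies and \eqref{avtintegral} is available; with $\chi(\Sigma)=-2(g-1)$ it reads
$$-2\pi(g-1)a+\frac b2\mathrm{Area}(\Sigma)=-2\pi N,$$
where $N$ is the sum of the orders of the zeroes of $\sqrt{|K-c|}$. I would first observe that $N\in\N$: by Theorem \ref{main} these zeroes are isolated, hence finite in number by compactness, and each order is a positive integer, being the order of the holomorphic factor $h$ in the local absolute value type representation of $\sqrt{|K-c|}$. Item (1) then follows, since when $b>0$ the identity forces $-2\pi(g-1)a=-2\pi N-\frac b2\mathrm{Area}(\Sigma)<0$, and $g-1>0$ gives $a>0$. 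Item (2) follows similarly: when $b=0$ the identity becomes $-2\pi(g-1)a=-2\pi N$, that is $a=\frac N{g-1}\in\frac1{g-1}\N$.

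For item (3) I would set $a=b=0$ in \eqref{ricciintegral}, which collapses to $2\int_\Sigma||\nabla K||^2\mu=0$; hence $\nabla K\equiv0$ and $K$ is constant (the degenerate case $K\equiv c$ being trivially included).

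I do not expect any genuine obstacle here: the substantive work — that $\sqrt{|K-c|}$ is of absolute value type (Theorem \ref{main}) and the two integral formulas — is already in place, so this proposition amounts to bookkeeping with the Euler characteristic, entirely parallel to items (1), (2) and (3) of Proposition \ref{sphere}.
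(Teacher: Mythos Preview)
Your proposal is correct and follows exactly the same approach as the paper: items (1) and (2) are deduced from \eqref{avtintegral} with $\chi(\Sigma)=2(1-g)<0$, and item (3) from \eqref{ricciintegral} with $a=b=0$. Your version is in fact slightly more detailed than the paper's (you spell out why $N\in\N$ and the inequality manipulation), but the argument is identical.
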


\begin{proof}
 \begin{enumerate}
 \item This is a consequence of \eqref{avtintegral} with $\chi(\Sigma)=2(1-g)<0$.
  \item This is a consequence of \eqref{avtintegral} with $\chi(\Sigma)=2(1-g)$ and $b=0$. 
  \item This is a consequence of \eqref{ricciintegral} with $a=b=0$.
\end{enumerate}
\end{proof}

We first give some examples from the existing literature (recall Section \ref{sec:immersions}).

\begin{ex} \label{ex:superminimal}
Let $c>0$.
The following examples do not have constant curvature when they have positive genus (by Remark \ref{kappa} and the Gauss-Bonnet formula). 
\begin{itemize}
\item  A smooth algebraic curve of degree $d\in\N^*$ in $\C\mathbb{P}^2(4c)$ is a compact orientable generalized Ricci surface of type $(6,-12c,4c)$ of genus $(d-1)(d-2)/2$.
 \item Let $\Sigma$ be a compact Riemann surface. 
Bryant \cite[Corollary H]{bryant-sphere} proved that there exists a conformal superminimal immersion $X:\Sigma\to\s^4(c)$. Hence, the metric induced by $X$ on $\Sigma$ is a conformal generalized Ricci metric of type  $(6,-2c,c)$. 
\item Let $\Sigma$ be a compact Riemann surface. It is pointed in \cite{madnick} that
Rowland \cite{rowland} proved that there exists a conformal superminimal pseudholomorphic embedding $X:\Sigma\to\s^6(c)$. Hence, the metric induced by $X$ on $\Sigma$ is a conformal generalized Ricci metric of type  $(6,-c,c)$.
\end{itemize}
\end{ex}

\begin{ex} \label{ex:superminimal2}
Let $c<0$. Let $\Sigma$ be a compact Riemann surface of genus at least $2$.
\begin{itemize}
 \item There exist non totally geodesic holomorphic immersions $X:\Sigma\to\C\h^2(4c)$ (see \cite{mcintosh23}, where they are parametrized by the sets $\mathcal{W}_\tau$).
 Hence, the metric induced by $X$ on $\Sigma$ is a conformal generalized Ricci metric of type  $(6,-12c,4c)$. It does not have constant curvature by Remark \ref{kappa} and the fact that $K\leqslant4c$.
 \item Loftin and McIntosh \cite[Theorem 4.6]{loftinmcintosh19} proved that there exist a non totally geodesic conformal superminimal immersions $X:\Sigma\to\h^4(c)$ (also, recently, Bronstein \cite{bronstein} proved that there exist embeddings provided the genus is large enough).
 Hence, the metric induced by $X$ on $\Sigma$ is a conformal generalized Ricci metric of type  $(6,-2c,c)$. It does not have constant curvature by \cite[Theorem 1]{kenmotsu} or by Remark \ref{kappa} and the fact that $K\leqslant c$.
\end{itemize}
\end{ex}

\begin{ex} \label{ex:lawson}
 Let $c>0$. As in Example \ref{ex:superminimal}, the following examples do not have constant curvature.
 \begin{itemize}
  \item Lawson \cite[Theorem 2]{lawson} constructed compact orientable embedded minimal surfaces in the sphere $\s^3(c)$ of any genus $g\geqslant2$; these are generalized Ricci surfaces of type $(4,0,c)$. 
  \item Haskins and Kapouleas \cite{hk} constructed (by gluing techniques) compact orientable special Legendrian surfaces in the sphere $\s^5(c)$ of any odd genus $g\geqslant3$ and of genus $4$. The image of such a surface by the Hopf fibration $\s^5(c)\to\C\mathbb{P}^2(4c)$ is a Lagrangian minimal surface, hence a generalized Ricci surface of type $(6,0,c)$.
 \end{itemize}

\end{ex}

We now construct generalized Ricci surfaces of type $(a,0,0)$ for $a$ satisfying the necessary condition in Proposition \ref{highgenus}. A method (analogue to that of Remark \ref{sphere-reciprocal}) consists in using the next lemma. In the case where $a\neq2$, we will however follow a method similar to that of Theorem \ref{sphere0} and inspired by \cite[Proposition 6.2]{moroianu}, based on the two metrics with conical singularities arising from Corollary \ref{constantcurvature}; this is more explicit and we think it is interesting on its own.

\begin{lemma} \label{reciprocal}
Let $g\geqslant2$ be an integer. Let $a>0$.
 Let $\Sigma$ be a compact orientable surface of genus $g$. Let $\{p_1, p_2,\dots, p_n\}$ be a set of distinct points of $\Sigma$.
 
 Assume that there exists a metric  $\rmd\sigma^2$ of constant curvature $\frac a2-1$ on $\Sigma\setminus\{p_1, p_2,\dots,p_n\}$ with conical singularities at the $p_j$ of respective orders $m_j\in\N^*$, $j\in\{1,\dots,n\}$, and such that the total area of $\rmd\sigma^2$ is equal to $4\pi(g-1)$.
    Then there exists a generalized Ricci metric $\rmd s^2$ of type $(a,0,0)$ on $\Sigma$ such that $K\leqslant0$ and $\rmd\sigma^2=|K|\rmd s^2$.
    
    Moreover, this metric is unique up to a homothety.
\end{lemma}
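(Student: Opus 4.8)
\textit{Proof plan.} The plan is to reduce the statement to the solvability of a single linear Poisson equation. By Corollary~\ref{constantcurvature}(2) with $b=c=0$, a smooth metric $\rmd s^2$ with nowhere vanishing curvature $K$ is a generalized Ricci metric of type $(a,0,0)$ if and only if $|K|\rmd s^2$ has constant curvature $\left(1-\frac a2\right)\sgn K$; when $K\le0$ and $K\not\equiv0$ this constant equals $\frac a2-1$, which is exactly the curvature of $\rmd\sigma^2$. Moreover $|K|\rmd s^2=\rmd\sigma^2$ as soon as these two metrics, both conformal to the Riemann surface structure underlying $\rmd\sigma^2$, have the same area form, and $K\le0$ is precisely the statement that the curvature $2$-form of $\rmd s^2$ equals $-\Omega$, where $\Omega$ denotes the area form of $\rmd\sigma^2$. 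So the whole problem amounts to producing a smooth metric on $\Sigma$, in the conformal class of $\rmd\sigma^2$, whose curvature $2$-form equals $-\Omega$; since the generalized Ricci equation~\eqref{ricci} is a pointwise polynomial identity in $K$, $\nabla K$ and $\Delta K$, it will then hold on all of $\Sigma$ by continuity even though $K$ vanishes at the $p_j$.

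First I would check that $\Omega$ extends to a smooth nonnegative $2$-form on $\Sigma$, vanishing only at the $p_j$: near $p_j$ one has $\rmd\sigma^2=e^{2u}|z|^{2m_j}|\rmd z|^2$ with $u$ smooth, by the facts recalled about constant curvature metrics with conical singularities in Section~\ref{sec:generalities}, so $\Omega=e^{2u}|z|^{2m_j}\frac i2\rmd z\wedge\rmd\bar z$ is smooth precisely because $m_j\in\N^*$ (cf.\ Remark~\ref{rknotsmooth}). Since $g\ge2$, let $\rmd h^2$ be the hyperbolic metric in the conformal class of $\rmd\sigma^2$, with area form $\mu_h$ and Laplace--Beltrami operator $\Delta_h$, and write $\Omega=\rho\,\mu_h$ with $\rho\in\cC^\infty(\Sigma,\R)$, $\rho\ge0$. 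By the Gauss--Bonnet formula for $\rmd h^2$ and the area hypothesis, $\int_\Sigma\rho\,\mu_h=\int_\Sigma\Omega=4\pi(g-1)=\int_\Sigma\mu_h$, so $\int_\Sigma(\rho-1)\,\mu_h=0$ and the equation $\Delta_h\varphi=\rho-1$ has a solution $\varphi\in\cC^\infty(\Sigma,\R)$, unique up to an additive constant. Setting $\rmd s^2=e^{2\varphi}\rmd h^2$ and using the conformal change formula recalled in Section~\ref{sec:generalities}, the curvature $2$-form of $\rmd s^2$ is $(-1-\Delta_h\varphi)\mu_h=-\rho\,\mu_h=-\Omega$; hence $K\le0$ with $K$ vanishing exactly at the $p_j$, $|K|\rmd s^2=\rmd\sigma^2$ by the first paragraph, and $\rmd s^2$ is a generalized Ricci metric of type $(a,0,0)$ on $\Sigma\setminus\{p_1,\dots,p_n\}$ by Corollary~\ref{constantcurvature}(2), hence on $\Sigma$.

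For uniqueness, if $\rmd s'^2$ is another generalized Ricci metric of type $(a,0,0)$ with $K'\le0$ and $|K'|\rmd s'^2=\rmd\sigma^2$, then $K'$ vanishes exactly at the $p_j$ and $\rmd s'^2=|K'|^{-1}\rmd\sigma^2$ is conformal to $\rmd\sigma^2$ away from the $p_j$, hence $\rmd s'^2=e^{2\varphi'}\rmd h^2$ for some $\varphi'\in\cC^\infty(\Sigma,\R)$; its curvature $2$-form is again $-\Omega$, so $\Delta_h\varphi'=\rho-1=\Delta_h\varphi$, whence $\varphi'-\varphi$ is harmonic, hence constant, and $\rmd s'^2$ is homothetic to $\rmd s^2$. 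I expect the only delicate points to be the smooth extension of $\Omega$ across the $p_j$ (the single place where integrality of the $m_j$ enters) and the bookkeeping needed to pass from equality of area forms to equality of metrics and to extend~\eqref{ricci} across the zeroes of $K$; the conformal change formula and the Fredholm solvability of the Poisson equation are routine.
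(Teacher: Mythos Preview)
Your proof is correct and follows essentially the same strategy as the paper's: extend the area form $\Omega$ of $\rmd\sigma^2$ smoothly across the $p_j$ (using integrality of the $m_j$), produce a smooth conformal metric whose curvature $2$-form equals $-\Omega$, and then invoke Corollary~\ref{constantcurvature}(2) to conclude. The only difference is that the paper obtains this metric by quoting the Wallach--Warner theorem \cite{ww} as a black box, whereas you carry out that step explicitly by solving $\Delta_h\varphi=\rho-1$ against the hyperbolic background metric; your argument is thus a self-contained reproof of the relevant case of Wallach--Warner. The uniqueness arguments are likewise identical in spirit (harmonicity of the difference of conformal factors).
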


\begin{proof}
  Let $\Omega$ be the area form of $\rmd\sigma^2$ on $\Sigma\setminus\{p_1, p_2,\dots,p_n\}$. It extends smoothly at the $p_j$ since the $m_j$ are integers. 
We have $\int_\Sigma\Omega=4\pi(g-1)=-2\pi\chi(\Sigma)$, so, by a theorem of Wallach and Warner \cite{ww}, there exists a conformal smooth metric $\rmd s^2$ on $\Sigma$ whose curvature $2$-form is $-\Omega$.
 
Then we have $-\Omega=K\mu$ where $K$ is the curvature of $\rmd s^2$ and $\mu$ its area form. So, $K\leqslant0$ and $\rmd\sigma^2=|K|\rmd s^2$. By Corollary \ref{constantcurvature} (item (2) with $c=0$), $\rmd s^2$ is a generalized Ricci metric of type $(a,0,0)$ on $\Sigma$.

Finally, if $e^{-2F}\rmd s^2$ with $F:\Sigma\to\R$ smooth is a metric with the same curvature form as $\rmd s^2$, then $F$ is harmonic, hence constant. This proves the uniqueness of $\rmd s^2$ up to a homothety.
\end{proof}

\begin{thm} \label{a00highgenus}
    Let $g\geqslant2$ be an integer. Let $a>0$ such that $(g-1)a\in\N^*$. Let $(m_1,\dots,m_n)$ be a partition of $(g-1)a$. 
    \begin{enumerate}
     \item There exists a compact orientable generalized Ricci surface of type $(a, 0, 0)$ of genus $g$ such that $K\leqslant0$ and such that the function $\sqrt{|K|}$ has exactly $n$ zeroes and their respective orders are $m_1,\dots,m_n$.
     \item Assume moreover that $(g-1)a<2g$ or that $(g-1)a$ is odd.
     
     Let $\Sigma$ be a compact Riemann surface of genus $g$ and let $p_1,\dots,p_n$ be distinct points of $\Sigma$. Then there exists a conformal generalized Ricci metric of type $(a, 0, 0)$ on $\Sigma$ such that $K\leqslant0$, the zeroes of the function $\sqrt{|K|}$ are $p_1,\dots,p_n$ and their respective orders are $m_1,\dots,m_n$.
     
     Moreover, if $a\in(0,2]$, then two such metrics are homothetic.
    \end{enumerate}
\end{thm}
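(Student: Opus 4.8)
The plan is to reduce both parts to the existence --- and, for the uniqueness assertion, to the essential uniqueness --- of a metric $\rmd\sigma^2$ of constant curvature $\kappa:=\frac a2-1$ with conical singularities of orders $m_1,\dots,m_n$ at $p_1,\dots,p_n$, and then to convert $\rmd\sigma^2$ into a generalized Ricci metric. The conversion can be carried out either by Lemma \ref{reciprocal}, or, when $a\neq2$, by the more explicit construction underlying Theorem \ref{sphere0}: first produce a conformal flat metric $\rmd\sigma_0^2$ with conical singularities of orders $\frac{2m_j}{a}>-1$ at the $p_j$ --- possible because $\sum_j\frac{2m_j}{a}=\frac{2}{a}\sum_j m_j=2(g-1)=-\chi(\Sigma)$ --- then write $\rmd\sigma^2=V\rmd\sigma_0^2$ and set $\rmd s^2=V^{\frac{2}{2-a}}\rmd\sigma_0^2$. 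By Propositions \ref{propV} and \ref{propVconical} this extends to a smooth generalized Ricci metric of type $(a,0,0)$ on $\Sigma$ with $K\leqslant0$, $K$ vanishing exactly at the $p_j$, and $\sqrt{|K|}$ of order $m_j$ at $p_j$; moreover $\rmd s^2$ is conformal to $\rmd\sigma_0^2$, hence to the given complex structure. It is essential here that the $m_j$ be positive integers, so that the conformal factor of $\rmd\sigma^2$ is smooth at the cone points (that of the flat metric $\rmd\sigma_0^2$ being automatically smooth).

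For part (1) I would use the freedom to choose $\Sigma$ and the $p_j$. If $0<a<2$ then $\kappa<0$ and $\chi(\Sigma)+\sum_j m_j=(g-1)(a-2)<0$, so a hyperbolic metric $\rmd\sigma^2$ with the prescribed conical data exists on any genus-$g$ surface with any $n$ marked points (Troyanov \cite{troyanov1991prescribing}), and Gauss--Bonnet forces its area to be $4\pi(g-1)$. If $a=2$ then $\kappa=0$: a flat metric with the prescribed conical orders exists since $\sum_j m_j=2g-2=-\chi(\Sigma)$ (see \cite{troyanov-flat}); rescaling it to area $4\pi(g-1)$ and applying Lemma \ref{reciprocal} gives the surface. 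If $a>2$ then $\kappa>0$ and $\rmd\sigma^2$ must be a spherical metric with conical singularities of integer orders $m_j$; using the freedom to choose $(\Sigma,p_1,\dots,p_n)$, existence follows from the theory of such metrics --- when $(g-1)(a-2)$ is even by realizing a suitable branched cover of $\mathbb{S}^2$ in the spirit of Theorem \ref{sphere0}, and in general from existence results for (possibly irreducible) spherical conical metrics on surfaces of genus $g\geqslant2$ (see \cite{mondello2019spherical,eremenko-gt}). In each case, feeding $\rmd\sigma^2$ into the construction of the previous paragraph produces the desired surface.

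For part (2) the Riemann surface $\Sigma$ and the points $p_j$ are prescribed, so $\rmd\sigma^2$ must lie in the given conformal class. For $0<a<2$ a hyperbolic metric with this conical divisor exists in the given conformal class and is unique (Troyanov--McOwen \cite{troyanov1991prescribing,mcowen}, as $\chi+\sum_j m_j<0$). For $a=2$ a flat metric with conical divisor $\sum_j m_j p_j$ in the given conformal class exists and is unique up to scaling --- the conformal factor solves a linear elliptic equation --- and the normalization that the area equal $4\pi(g-1)$ fixes the scale; Lemma \ref{reciprocal} then applies. For $a>2$ one needs a spherical metric with integer conical orders $m_j$ in the prescribed conformal class, and this is exactly where the hypothesis ``$(g-1)a<2g$ or $(g-1)a$ odd'' enters: since $(g-1)a\geqslant 2g-1$ when $a>2$, the hypothesis amounts to $(g-1)a$ being odd, equivalently to $\frac{(g-1)(a-2)}{2}\notin\Z$, which rules out the reducible (branched-cover) metrics and places the problem within reach of the existence theorems for irreducible spherical conical metrics. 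Conformality of the resulting $\rmd s^2$ is built into the construction.

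Finally, for the uniqueness statement when $a\in(0,2]$: let $\rmd s^2$ be any generalized Ricci metric of type $(a,0,0)$ with $K\leqslant0$ and the prescribed zeroes of $\sqrt{|K|}$. Applying Corollary \ref{constantcurvature}(2) on $\Sigma\setminus\{p_1,\dots,p_n\}$ (with $c=0$, hence $b=0$, and $\sgn(K)=-1$), the metric $|K|\rmd s^2$ has constant curvature $\frac a2-1$ there, and it extends over the $p_j$ with conical singularities of order $m_j$; by Gauss--Bonnet its area is $-2\pi\chi(\Sigma)=4\pi(g-1)$, so it coincides with the metric $\rmd\sigma^2$ above, which is unique (for $a<2$ by Troyanov--McOwen, for $a=2$ because the flat conical metric is unique up to scale and its area is prescribed). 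Writing $\rmd s_i^2=e^{-2f_i}|\rmd z|^2$ and $\rmd\sigma^2=e^{2\phi}|\rmd z|^2$ in a local conformal coordinate, the relations $|K_i|\rmd s_i^2=\rmd\sigma^2$ and $K_i=4e^{2f_i}(f_i)_{z\bar z}\leqslant0$ give $(f_i)_{z\bar z}=-\frac14 e^{2\phi}$, so $f_1-f_2$ is a globally defined smooth harmonic function on the compact surface $\Sigma$, hence constant, and $\rmd s_1^2$ and $\rmd s_2^2$ are homothetic. I expect the main difficulty to be the case $a>2$ --- producing the spherical conical metric, on a suitable surface in part (1) and on the prescribed one in part (2), where the parity hypothesis is precisely what guarantees its existence; the cases $a\leqslant2$ rely on the standard theory of non-positively curved metrics with conical singularities.
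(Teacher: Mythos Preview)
Your approach is essentially the paper's: produce a constant-curvature metric $\rmd\sigma^2$ with conical singularities of orders $m_j$, build the auxiliary flat conical metric $\rmd\sigma_0^2$, and set $\rmd s^2=V^{2/(2-a)}\rmd\sigma_0^2$ (or use Lemma~\ref{reciprocal} when $a=2$). The case split (hyperbolic for $a<2$, flat for $a=2$, spherical for $a>2$) is identical.

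Two points of comparison. First, in part (2) for $a>2$ you correctly observe that the hypothesis reduces to $(g-1)a$ odd, but you then gesture at ``existence theorems for irreducible spherical conical metrics'' without citing one. The paper is more careful here: it splits into $(a-2)(g-1)\in(0,2)$, i.e.\ $(g-1)a=2g-1$, handled by Troyanov's Theorem~C, and $(a-2)(g-1)>2$ odd, handled by \cite[Theorem~1.1]{bdmm}. Your sketch is right in spirit but you should name the results. (Also, for part (1) with $a>2$, the branched-cover idea does not in general realize a \emph{prescribed} partition $(m_1,\dots,m_n)$; the paper simply invokes \cite[Theorem~A]{mondello2019spherical}.)

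Second, your uniqueness argument for $a\in(0,2]$ is a clean alternative to the paper's. The paper (for $a<2$) argues that both $|K|\rmd s^2$ and $|K|^{2/a}\rmd s^2$ are uniquely determined (the first as the hyperbolic conical metric, the second up to scale as the flat conical metric), and combines the two. You instead fix only $|K|\rmd s^2=\rmd\sigma^2$ and then observe that $(f_i)_{z\bar z}=-\tfrac14 e^{2\phi}$ forces $f_1-f_2$ to be harmonic on the compact $\Sigma$, hence constant. This is correct and is really the uniqueness half of Lemma~\ref{reciprocal} (the Wallach--Warner argument), applied uniformly for all $a\in(0,2]$; it is arguably more direct than the paper's two-metric route.
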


\begin{proof}
\begin{enumerate}
 \item  If $a\leqslant2$ then (1) follows from (2), so we will only prove (1) for $a>2$.
 Then we have 
     $$2-2g + \sum_{j=1}^{n}m_j=(a-2)(g-1)>0.$$
    Hence, we know from  \cite[Theorem A]{mondello2019spherical} that there exist a compact oriented Riemann surface $\Sigma$ of genus $g$, a set of distinct points $\{p_1, p_2,\dots, p_n\}$ and a conformal metric $\rmd\sigma^2$ of constant curvature $\frac a2-1>0$ on $\Sigma\setminus\{p_1, p_2,\dots,p_n\}$ with conical singularities at the $p_j$ of respective orders $m_j$ (i.e., angle $2\pi(m_j+1)$), $j\in\{1,\dots,n\}$.
    
    Around each $p_j$ there is a conformal coordinate $z$ defined on an open set $U_j$ such that $z=0$ at $p_j$ and
    $$\rmd\sigma^2=e^{2v_j}|z|^{2m_j}|\rmd z|^2$$ on $U_j$ with $v_j\in\cC^\infty(U_j,\R)$.
    It is important to notice that, since $m_j\in\N^*$, the function $v_j$ is smooth at the origin.

     Let us define a function $\beta:\{p_1, p_2,\dots, p_n\}\rightarrow\mathbb{R}$ by
    $$\beta(p_j) = \frac{2m_j +a}a.$$
   Hence 
    $$\sum_{j=1}^{n}(\beta(p_j)-1) = \frac{2}{a}\sum_{j=1}^{n}m_j = 2(g-1) = -\chi(\Sigma).$$
    So, from \cite[Lemma 6.1]{moroianu}, we obtain a flat metric $\rmd\sigma^2_0$ defined on $\Sigma\setminus\{p_1, p_2,\dots,p_n\}$ that is conformal to $\rmd\sigma^2$ and such that, in each $U_j$, this metric can be  written as 
   $$\rmd\sigma^2_0=e^{2u_j}|z|^{2\beta(p_j)-2}|\rmd z|^2=e^{2u_j}|z|^{\frac{4}{a}m_j}|\rmd z|^2$$ with $z$ as above
    for some $u_j\in\cC^{\infty}(U_j,\mathbb{R})$.

     There is a smooth positive function $V$ on $\Sigma\setminus\{p_1, p_2,\dots,p_n\}$ such that $\rmd\sigma^2 = V\rmd\sigma^2_0$. We may define $\rmd s^2=V^{\frac{2}{2-a}}\rmd\sigma^2_0$ on $\Sigma\setminus\{p_1, p_2,\dots,p_n\}$. Then, by Propositions \ref{propV} and \ref{propVconical}, $\rmd s^2$ extends to a metric on $\Sigma$ with the desired properties. This concludes the proof of (1) when $a>2$.
    \item  We first treat the case where $a\neq2$. We claim that there exists a conformal metric $\rmd\sigma^2$ of constant curvature $\frac a2-1$ on $\Sigma\setminus\{p_1, p_2,\dots,p_n\}$ with conical singularities at the $p_j$ of respective orders $m_j$, $j\in\{1,\dots,n\}$ on $\Sigma$. 
    \begin{itemize}
     \item If $a\in(0,2)$, then $\frac a2-1<0$ and $$2-2g + \sum_{j=1}^{n}m_j=(a-2)(g-1)<0,$$ so this follows from \cite{mcowen} or \cite[Theorem A]{troyanov1991prescribing}.
      \item If $a>2$ and $(g-1)a<2g$, then $\frac a2-1>0$ and $$2-2g + \sum_{j=1}^{n}m_j=(a-2)(g-1)\in(0,2),$$ so this follows from \cite[Theorem C]{troyanov1991prescribing}.
        \item If $a>2$, $(g-1)a>2g$ and $(g-1)a$ is odd, then $\frac a2-1>0$, $$2-2g + \sum_{j=1}^{n}m_j=(a-2)(g-1)>2\textrm{  and }\notin2\N,$$ so this follows from \cite[Theorem 1.1]{bdmm}.
    \end{itemize}
This proves the claim. Then, we conclude the proof of existence of the desired generalized Ricci metric as in (1).
        
    We now prove uniqueness up to homotheties in the case where $a\in(0,2)$. If $\rmd s^2$ is a generalized Ricci metric with the required properties, then by Corollary \ref{constantcurvature} the metric $|K|\rmd s^2$ has the same properties as $\rmd\sigma^2$, hence is equal to $\rmd\sigma^2$ by uniqueness in \cite{mcowen} or \cite[Theorem A]{troyanov1991prescribing}. Also, the metric $|K|^{2/a}\rmd s^2$ is flat on $\Sigma\setminus\{p_1, p_2,\dots,p_n\}$ with conical singularities at the $p_j$ of respective orders $2m_j/a$, hence it is homothetic to $\rmd\sigma_0^2$ by \cite[p. 90]{troyanov-flat}. This concludes the proof in the case where $a\in(0,2)$.
    
    We now consider the case where $a=2$. We have $$2-2g + \sum_{j=1}^{n}m_j=0,$$ so, by \cite[p. 90]{troyanov-flat} or \cite[Lemma 6.1]{moroianu}, there exists  a conformal flat metric $\rmd\sigma^2_0$ defined on $\Sigma\setminus\{p_1, p_2,\dots,p_n\}$ with conical singularities at the $p_j$ of respective orders  $m_j$, $j\in\{1,\dots,n\}$. Mutliplying this metric by a positive constant if necessary, we may assume its total area is equal to $4\pi(g-1)$. Hence, the existence of $\rmd s^2$ follows from Lemma \ref{reciprocal}.
     
 To prove the uniqueness of $\rmd s^2$ up to homotheties, we first notice that, as in the case where $a\in(0,2)$, the metric $|K|\rmd s^2$ is homothetic to $\rmd\sigma_0^2$. Then, the Gauss-Bonnet formula implies $\int_\Sigma|K|\rmd s^2=4\pi(g-1)$, so $|K|\rmd s^2=\rmd\sigma_0^2$.
  Consequently, $\rmd s^2$ is necessarily obtained by the above construction. But this construction provides a unique metric up to homotheties by Lemma \ref{reciprocal}. This concludes the proof in the case where $a=2$.
    \end{enumerate}
\end{proof}

\begin{rem}
 We can also use Lemma \ref{reciprocal} to prove (1) and (2) for any $a>0$. Let $\Sigma$ and $\rmd\sigma^2$ be as in the beginning of the proof of (1) or (2). Applying the Gauss-Bonnet formula with conical singularities \cite[Proposition 1]{troyanov1991prescribing} to $\rmd\sigma^2$ gives
 $$\frac1{2\pi}\left(\frac a2-1\right)\mathrm{Area}(\rmd\sigma^2)=\chi(\Sigma)+\sum_{j=1}^nm_j=(a-2)(g-1)$$ and so $\Area(\rmd\sigma^2)=4\pi(g-1)$ if $a\neq2$. Then existence follows from Lemma \ref{reciprocal}, and the uniqueness up to homotheties when $a\in(0,2)$ follows from the uniqueness of $\rmd\sigma^2$ and Lemma \ref{reciprocal}.
\end{rem}

\begin{rem}
 In the remaining case, i.e., $(g-1)a\geqslant2g$ and even, we do not know if there exists, on a given compact Riemann surface $\Sigma$ of genus $g$, a conformal generalized Ricci metric of type $(a,0,0)$, even without prescribing properties of the zeroes of $K$. This is equivalent to the existence of a conformal positive constant curvature metric with conical singularities on $\Sigma$ whose orders $m_1,\dots,m_n$ are a partition of $(g-1)a$. See \cite{mondello2019spherical} for a general discussion. A sufficient condition is the existence of a holomorphic branched cover $G:\Sigma\to\overline\C$ of degree $d=\frac12(g-1)(a-2)\in\N^*$; we can consider the pullback by $G$ of a conformal spherical metric on $\overline\C$; indeed, the Riemann-Hurwitz formula reads $2-2g+\sum_{j=1}^nm_j=2d$. For instance, if $\Sigma$ is hyperelliptic and $(g-1)(a-2)\in4\N^*$, one may take $G=\Phi^{d/2}$ where $\Phi:\Sigma\to\overline\C$ is a holomorphic branched cover of degree $2$. When $a=4$ this is \cite[Corollary 6.3 and Example 6.4]{moroianu}.
\end{rem}

\begin{rem}
 It follows from item (2) of Theorem \ref{a00highgenus} that every compact Riemann surface of genus at least $2$ admits conformal Ricci metrics of type $(1,0,0)$ and of type $(2,0,0)$.
\end{rem}

We will now construct generalized Ricci surfaces of type $(a,0,c)$ of genus $g\geqslant2$ for $a>0$ and $c<0$ satisfying the necessary condition given in Proposition \ref{highgenus}. The strategy is to look for such a metric of the form $e^{-2\Psi}\rmd\sigma^2$ where $\rmd\sigma^2$ is a metric of constant curvature $c$, and to study the partial differential equation satisfied by the function $\Psi$. We first need a lemma.

 \begin{lemma} \label{pde}
 Let $g\geqslant2$ be an integer and let $\Sigma$ be a compact Riemann surface of genus $g$. Let $a>0$, $c<0$ and $\rmd\sigma^2$ be the conformal metric of constant curvature $c$ on $\Sigma$. Let $\underline\Delta$ denote the Laplace-Beltrami operator of $\rmd\sigma^2$.
 \begin{enumerate}
  \item For any smooth function $\Theta:\Sigma\to\R_+$, there exists a smooth function $\Psi:\Sigma\to\R$ such that
 $$\underline\Delta\Psi+c-ce^{-2\Psi}-\Theta e^{(a-2)\Psi}=0.$$ 
 \item There exists a constant $R>0$ such that, for any smooth function $\Theta:\Sigma\to[0,R]$, there exists a smooth function $\Psi:\Sigma\to\R$ such that
 $$\underline\Delta\Psi+c-ce^{-2\Psi}+\Theta e^{(a-2)\Psi}=0.$$ 
 \end{enumerate}
 \end{lemma}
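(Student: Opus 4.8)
The plan is to treat both parts by the variational or sub/supersolution method for the semilinear elliptic PDE $\underline\Delta\Psi = -c + ce^{-2\Psi} \pm \Theta e^{(a-2)\Psi}$ on the closed surface $(\Sigma,\rmd\sigma^2)$, exploiting the sign $c<0$. First I would set $F_\pm(x,t) = -c + ce^{-2t} \pm \Theta(x)e^{(a-2)t}$ and look for constant sub- and supersolutions: a constant $t$ is a subsolution (resp.\ supersolution) of $\underline\Delta\Psi = F(x,\Psi)$ precisely when $F(x,t)\leqslant 0$ (resp.\ $\geqslant 0$) for all $x$. For part (1), since $c<0$, as $t\to+\infty$ we have $-c+ce^{-2t}\to -c>0$ while $\Theta e^{(a-2)t}$ is $\geqslant 0$ (and if $a\geqslant 2$ it grows, if $a<2$ it decays), so $F_+(x,t)\to -c>0$ and a large constant $t=\overline t$ is a supersolution. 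As $t\to-\infty$, $ce^{-2t}\to -\infty$ dominates and $-\Theta e^{(a-2)t}$ stays $\leqslant 0$... wait, for $F_+$ the $+\Theta e^{(a-2)t}$ term: if $a<2$ it $\to+\infty$, so one must check the balance; the cleanest route is to note $ce^{-2t}+ \Theta e^{(a-2)t} \to -\infty$ as $t\to-\infty$ because $e^{-2t}$ beats $e^{(a-2)t}$ for $a>0$ (indeed $-2 < a-2$), hence $F_+(x,t)\to -\infty$ and a very negative constant $\underline t$ is a subsolution with $\underline t<\overline t$. The standard monotone iteration (or Perron method) between an ordered sub/supersolution pair then produces a smooth solution $\Psi$ with $\underline t\leqslant\Psi\leqslant\overline t$; elliptic regularity (bootstrapping, since $\Theta$ is smooth) gives $\Psi\in\cC^\infty$.

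For part (2) the sign in front of $\Theta$ is reversed, so $F_-(x,t) = -c + ce^{-2t} - \Theta(x)e^{(a-2)t}$, and now the supersolution at $+\infty$ is threatened when $a>2$ because $-\Theta e^{(a-2)t}\to -\infty$. This is where the uniform smallness bound $\Theta\leqslant R$ enters. I would fix a candidate supersolution level $\overline t$ (for instance chosen so that $-c+ce^{-2\overline t}\geqslant -c/2$, i.e.\ $\overline t$ depending only on $c$) and then require $R\,e^{(a-2)\overline t_+}\leqslant -c/2$ where $\overline t_+=\max(\overline t,0)$; this forces $F_-(x,\overline t)\geqslant 0$ for every $x$ and every admissible $\Theta$, so $\overline t$ is a uniform supersolution. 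The subsolution at $-\infty$ works as in part (1) since dropping the $-\Theta e^{(a-2)t}$ term only makes $F_-$ more negative: $F_-(x,t)\leqslant -c+ce^{-2t}\to -\infty$. Thus with this explicit $R=R(a,c)>0$ we again have an ordered sub/supersolution pair independent of the particular $\Theta\in[0,R]$, and monotone iteration plus regularity finishes the proof.

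The monotone iteration itself is routine once the ordered pair is in hand: one adds a large multiple $\Lambda\Psi$ to both sides so that $t\mapsto F(x,t)+\Lambda t$ is increasing on the relevant interval $[\underline t,\overline t]$ (possible since $F$ is smooth in $t$ on a bounded interval), solves the linear problems $(\underline\Delta - \Lambda)\Psi_{k+1} = -F(x,\Psi_k)-\Lambda\Psi_k$ iteratively — which is uniquely solvable because $\underline\Delta-\Lambda$ is invertible on $\cC^\infty(\Sigma)$ for $\Lambda>0$ — starting from $\Psi_0=\underline t$, and uses the maximum principle to get a monotone sequence trapped between $\underline t$ and $\overline t$; its limit is a weak solution, and Schauder estimates promote it to $\cC^\infty$. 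The only genuine subtlety — the part I expect to require the most care — is the bookkeeping in part (2): one must verify that the supersolution level $\overline t$ and the threshold $R$ can be chosen simultaneously and depending only on $a$ and $c$ (not on $\Theta$), which is exactly the assertion that $R$ is a \emph{fixed} constant in the statement; this is a short but essential calculation balancing the fixed positive contribution $-c+ce^{-2\overline t}$ against the $\Theta$-term at height $\overline t$.
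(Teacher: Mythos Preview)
Your proposal is correct and follows the same approach as the paper: both use the method of ordered constant sub- and supersolutions together with monotone iteration and elliptic regularity. The paper's choice of barriers is slightly tighter---it takes $\Psi\equiv 0$ as one barrier in each case (supersolution in (1), subsolution in (2)) and locates the other barrier via a root of the auxiliary function $P(X)=\varepsilon MX^{a/2}-cX+c$ with $M=\max_\Sigma\Theta$---but this is a cosmetic difference, and your asymptotic argument as $t\to\pm\infty$ is equally valid.
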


 \begin{proof}
 We use the method of subsolutions and supersolutions, as in \cite[Section 3.1]{loftinmcintosh} and \cite[proof of Theorem 5.1]{lmi-dedicata}. Let $\varepsilon\in\{-1,1\}$ and $\Theta:\Sigma\to\R_+$ be a smooth function. For a function $\Psi:\Sigma\to\R$ of class $\cC^2$ we set
 $$\cL(\Psi)=\underline\Delta\Psi+c-ce^{-2\Psi}-\varepsilon\Theta e^{(a-2)\Psi}.$$ If $\Theta\equiv0$ then $\Psi\equiv0$ is a desired solution. So we now assume that $\Theta$ is not identically zero. We set $M=\max_\Sigma\Theta>0$ and we consider the function $$P(X)=\varepsilon MX^{a/2}-cX+c.$$ 
 
 We first deal with the case where $\varepsilon=1$. We have $\cL(0)\leqslant0$. On the other hand, we have $\cL(\Psi)\geqslant\underline\Delta\Psi-e^{-2\Psi}P(e^{2\Psi})$. Since $P(0)=c<0$ and $P(1)=M>0$, there is $t\in(0,1)$ such that $P(t)=0$, and so 
  $\cL(\frac12\log t)\geqslant 0$. Since $\frac12\log t<0$, there is a smooth solution $\Psi:\Sigma\to[\frac12\log t,0]$ to $\cL(\Psi)=0$. This proves (1).
 
 We now consider the case where  $\varepsilon=-1$. We have $\cL(0)\geqslant0$. On the other hand, we have $\cL(\Psi)\leqslant\underline\Delta\Psi-e^{-2\Psi}P(e^{2\Psi})$. We fix $t>1$. Then $-ct+c>0$ so there exists $R>0$ such that $-Rt^{a/2}-ct+c>0$. We now assume that $M\leqslant R$. Then $P(t)>0$, so we have $\cL(\frac12\log t)\leqslant 0$. Since $\frac12\log t>0$, there is a smooth solution $\Psi:\Sigma\to[0,\frac12\log t]$ to $\cL(\Psi)=0$. This proves (2).
 \end{proof}

 \begin{thm} \label{a0chighgenus}
 Let $g\geqslant2$ be an integer. Let $a>0$ such that $(g-1)a\in\N^*$. Let $c<0$ and $\varepsilon\in\{-1,1\}$.  Let $\Sigma$ be a compact Riemann surface of genus $g$ and let $p_1,\dots,p_n$ be distinct points of $\Sigma$. Let $(m_1,\dots,m_n)$ be a partition of $(g-1)a$.  Then there exists a conformal generalized Ricci metric of type $(a,0,c)$ on $\Sigma$ such that $\sgn(K-c)=\varepsilon$, the zeroes of the function $\sqrt{|K-c|}$ are $p_1,\dots,p_n$ and their respective orders are $m_1,\dots,m_n$.
 
 Moreover, there exist infinitely many such metrics on $\Sigma$ that are not isometric one to another.
 \end{thm}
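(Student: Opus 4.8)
The plan is to look for the metric in the form $\rmd s^2=e^{-2\Psi}\rmd\sigma^2$, where $\rmd\sigma^2$ is the conformal metric of constant curvature $c$ on $\Sigma$ (it exists and is unique because $c<0$ and $g\geqslant2$) and $\underline\Delta$ denotes its Laplace--Beltrami operator, with $\Psi$ furnished by Lemma \ref{pde} for a suitable nonnegative function $\Theta$. Writing $\varepsilon=1$ for item (1) of Lemma \ref{pde} and $\varepsilon=-1$ for item (2), a solution of $\underline\Delta\Psi+c-ce^{-2\Psi}-\varepsilon\Theta e^{(a-2)\Psi}=0$ yields, through the conformal change formulas recalled in Section \ref{sec:generalities}, a smooth metric on $\Sigma$ whose curvature is $K=e^{2\Psi}(c+\underline\Delta\Psi)=c+\varepsilon\Theta e^{a\Psi}$; hence $\sgn(K-c)=\varepsilon$ on $\{\Theta>0\}$, and on that open set $\Delta\log|K-c|=e^{2\Psi}\underline\Delta(\log\Theta+a\Psi)=e^{2\Psi}\underline\Delta\log\Theta+a\,e^{2\Psi}\underline\Delta\Psi$. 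So, provided $\underline\Delta\log\Theta=ac$ off the zero set of $\Theta$, this equals $aK$ and, by Lemma \ref{deltalog}, $\rmd s^2$ is a generalized Ricci metric of type $(a,0,c)$. Note that this condition on $\Theta$ is insensitive to multiplying $\Theta$ by a positive constant.

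The core of the proof is therefore to produce a smooth function $\Theta_0:\Sigma\to[0,+\infty)$ vanishing exactly at $p_1,\dots,p_n$, to order $2m_j$ at $p_j$, with $\underline\Delta\log\Theta_0=ac$ on $\Sigma\setminus\{p_1,\dots,p_n\}$. I would build it with a Hermitian holomorphic line bundle: set $D=\sum_j m_jp_j$ and $L=\cO(D)$, with $s\in H^0(\Sigma,L)$ the section of divisor $D$. Since $\deg L=\sum_j m_j=(g-1)a$ and, by Gauss--Bonnet, $\int_\Sigma\frac{-ac}{4\pi}\mu_\sigma=-\frac a2\chi(\Sigma)=(g-1)a$ too ($\mu_\sigma$ being the area form of $\rmd\sigma^2$), the $(1,1)$-form $\frac{-ac}{4\pi}\mu_\sigma$ represents $c_1(L)$; hence, by the $\partial\bar\partial$-lemma on $\Sigma$, the bundle $L$ carries a smooth Hermitian metric $h_L$ whose first Chern form equals $\frac{-ac}{4\pi}\mu_\sigma$. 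I then set $\Theta_0=|s|_{h_L}^2$. Away from $D$ this reads locally $\Theta_0=|f|^2e^{-\psi_L}$ with $f$ holomorphic and $e^{-\psi_L}$ the weight of $h_L$, so $\underline\Delta\log\Theta_0=2\underline\Delta\log|f|-\underline\Delta\psi_L=ac$ (the first term vanishes and $\underline\Delta\psi_L=-ac$ by the curvature normalization); near $p_j$, $\Theta_0=|z|^{2m_j}\rho_j$ with $\rho_j>0$ smooth, so $\Theta_0$ is smooth on $\Sigma$ with exactly the prescribed zeroes.

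To conclude existence, I apply Lemma \ref{pde}(1) to $\Theta=\lambda\Theta_0$ for any $\lambda>0$ if $\varepsilon=1$, and, if $\varepsilon=-1$, I first pick $\lambda\in(0,R/\max_\Sigma\Theta_0]$, where $R$ is the constant of Lemma \ref{pde}(2), so that $\lambda\Theta_0$ has values in $[0,R]$, and apply Lemma \ref{pde}(2). In both cases I obtain a smooth $\Psi$, hence a smooth metric $\rmd s^2=e^{-2\Psi}\rmd\sigma^2$ on $\Sigma$ which, by the first paragraph, is a generalized Ricci metric of type $(a,0,c)$ with $\sgn(K-c)=\varepsilon$; and since $\sqrt{|K-c|}=\sqrt{\lambda\Theta_0}\,e^{a\Psi/2}$ equals $|z|^{m_j}$ times a positive smooth function near $p_j$, it is of absolute value type with zeroes exactly $p_1,\dots,p_n$, of orders $m_1,\dots,m_n$.

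For the final assertion I let $\lambda$ range over $(0,+\infty)$ if $\varepsilon=1$ (resp. over $(0,R/\max_\Sigma\Theta_0]$ if $\varepsilon=-1$), obtaining metrics $\rmd s^2_\lambda$, and claim distinct $\lambda$ give non-isometric metrics. An isometry from $(\Sigma,\rmd s^2_\lambda)$ to $(\Sigma,\rmd s^2_{\lambda'})$ is conformal or anticonformal for the complex structure of $\Sigma$, and pulls $\rmd\sigma^2$ back to a metric of constant curvature $c$ in its conformal class, hence to $\rmd\sigma^2$ itself; so it is an isometry $\Phi$ of $(\Sigma,\rmd\sigma^2)$ with $\Psi_{\lambda'}\circ\Phi=\Psi_\lambda$. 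Comparing curvatures then gives $\lambda\Theta_0=\lambda'(\Theta_0\circ\Phi)$ on $\Sigma$, and integrating against the $\Phi$-invariant measure $\mu_\sigma$ (using $\int_\Sigma\Theta_0\,\mu_\sigma>0$) forces $\lambda=\lambda'$. I expect the main obstacle to be the construction of $\Theta_0$, and in particular the bookkeeping of signs and normalizations making the curvature-form condition on $(L,h_L)$ translate exactly into $\underline\Delta\log\Theta_0=ac$; the one genuinely decisive input there is the equality of total integrals $\deg L=(g-1)a=\int_\Sigma\frac{-ac}{4\pi}\mu_\sigma$, which is precisely what the hypothesis $(g-1)a\in\N^*$ together with Gauss--Bonnet provides. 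The remaining ingredients --- Lemma \ref{pde}, the conformal change formulas, and the isometry argument --- are then routine.
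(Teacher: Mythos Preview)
Your proof is correct and follows essentially the same strategy as the paper's: write $\rmd s^2=e^{-2\Psi}\rmd\sigma^2$ for the hyperbolic metric $\rmd\sigma^2$, manufacture a suitable $\Theta$, and invoke Lemma \ref{pde}. The only differences are in packaging. The paper builds $\Theta$ by taking a flat conformal metric $\rmd\sigma_0^2$ with conical singularities of orders $\tfrac{2m_j}{a}$ at the $p_j$ (from \cite[Lemma 6.1]{moroianu}) and writing $\rmd\sigma_0^2=\Theta^{2/a}\rmd\sigma^2$; your condition $\underline\Delta\log\Theta_0=ac$ is exactly the statement that $\Theta_0^{2/a}\rmd\sigma^2$ is flat, so your Hermitian-bundle construction is the same object in different clothes. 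For verification, the paper appeals to Lemma \ref{lemmaholo} (producing a local holomorphic $h$ with $e^{-af}(K-c)=\varepsilon|h|^2$), while you compute $\Delta\log|K-c|=aK$ directly via Lemma \ref{deltalog}; both are fine. For the infinitude of non-isometric metrics, the paper observes that $|K-c|^{2/a}\rmd s^2$ recovers $\rmd\sigma_0^2$ (hence its total area distinguishes the scalings), whereas you argue via the isometry group of $\rmd\sigma^2$ and integrate $\lambda\Theta_0=\lambda'(\Theta_0\circ\Phi)$; again equivalent.
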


 \begin{proof}
 Let $\rmd\sigma^2$ be the conformal metric of constant curvature $c$ on $\Sigma$. Around each $p_j$ we consider a conformal coordinate $z$ defined on an open set $U_j$ such that $z=0$ at $p_j$. As in the proof of Theorem \ref{a00highgenus}, from \cite[Lemma 6.1]{moroianu} we obtain a flat metric $\rmd\sigma^2_0$ defined on $\Sigma\setminus\{p_1, p_2,\dots,p_n\}$ that is conformal to $\rmd\sigma^2$ and such that, in each $U_j$, this metric can be  written as 
   $\rmd\sigma^2_0=e^{2u_j}|z|^{\frac4am_j}|\rmd z|^2$ 
    for some $u_j\in\cC^{\infty}(U_j,\mathbb{R})$.
 Then we may write $\rmd\sigma_0^2=\Theta^{2/a}\rmd\sigma^2$ with $\Theta:\Sigma\to\R_+$ smooth and vanishing exactly at the $p_j$.
 
  Let $\underline\Delta$ be the Laplace-Beltrami operator of $\rmd\sigma^2$. By Lemma \ref{pde}, multiplying the metric $\rmd\sigma_0^2$ by a positive constant if necessary in the case where $\varepsilon=-1$, there exists a smooth function $\Psi:\Sigma\to\R$ such that $$\underline\Delta\Psi+c-ce^{-2\Psi}-\varepsilon\Theta e^{(a-2)\Psi}=0.$$
  
 We now claim that the metric $\rmd s^2=e^{-2\Psi}\rmd\sigma^2$ has the required properties. If $z$ is a local conformal coordinate on an open simply connected set $U\subset\Sigma$, we write $\rmd s^2=e^{-2f}|\rmd z|^2$ and $\rmd\sigma^2=e^{-2V}|\rmd z|^2$. Then $\rmd\sigma_0^2=\Theta^{2/a}e^{-2V}|\rmd z|^2$. Since this metric is flat (away from its singular points), $\Theta^{2/a}e^{-2V}$ is log-harmonic and so there exists a holomorphic map $h:U\to\C$ such that $\Theta e^{-aV}=|h|^2$.
  Also, $\Psi=f-V$ and, since $\rmd\sigma^2$ has constant curvature $c$, we compute that the curvature of $\rmd s^2$ is
 $$K=e^{2\Psi}(c+\underline\Delta\Psi)=c+\varepsilon\Theta e^{a\Psi}.$$ On $U$ this reads $$K=c+\varepsilon|h|^2e^{af},$$ which is equivalent to \eqref{holomorphic} with $b=0$. Hence, by Lemma \ref{lemmaholo}, $\rmd s^2$ is a generalized metric of type $(a,0,c)$ with $\varepsilon=\sgn(K-c)$, since $\Theta$ does not vanish identically. The zeroes of $\sqrt{|K-c|}$ are the zeroes of $\sqrt\Theta$; hence there are $n$ zeroes and their respective orders are $m_1,\dots,m_n$ because of the local expression of $\rmd\sigma_0^2$. This proves that the metric $\rmd s^2=e^{-2\Psi}\rmd\sigma^2$ has the required properties.
 
 We have $\rmd\sigma_0^2=|K-c|^{2/a}\rmd s^2$. So the metric $\rmd s^2$ determines $\rmd\sigma_0^2$ uniquely. On the other hand, in the above construction one may multiply $\rmd\sigma_0^2$ by any positive real number, sufficiently small if $\varepsilon=-1$. In this way one can produce infinitely many generalized Ricci metrics with the required properties but not isometric one to another. 
  \end{proof}

\bibliographystyle{plain}
\bibliography{daniel-zang}

\end{document}